\makeatother
\documentclass[12 pt]{amsart}
\title{Sparsity of rational points on torsion level covers \\of Hilbert modular varieties}
\author{Soheil Memariansorkhabi}\address{Department of Mathematics, Purdue University,
West Lafayette, IN 47907, USA}

\address{Department of Mathematics, University of Toronto,
Toronto, Canada}

\email{smemaria@purdue.edu}
\email{soheil.memarian@alumni.utoronto.ca}

\makeatletter
\renewcommand{\@setauthors}{%
  \begingroup
  \def\thanks{\protect\thanks@warning}%
  \trivlist
  \centering\footnotesize \@topsep30\p@\relax
  \advance\@topsep by -\baselineskip
  \item\relax
  \author@andify\authors
  \def\\{\protect\linebreak}%
  \authors
  \endtrivlist
  \endgroup
}

\usepackage[utf8]{inputenc}

\usepackage{amsfonts}

\usepackage{mathrsfs}
\usepackage[utf8]{inputenc}
\usepackage{amsmath, amsfonts, amsthm}
\usepackage{amssymb}
\usepackage{fullpage}
\usepackage{mathtools}
\usepackage{enumitem}
\usepackage{amsmath}
\usepackage{hyperref}
\usepackage[dvipsnames]{xcolor}

\hypersetup{
    colorlinks=true,
    citecolor=blue,
    linkcolor=magenta,
    filecolor=magenta,      
    urlcolor= black,
    }

\newtheorem{theorem}{Theorem}[section]
\newtheorem{Lemma}[theorem]{Lemma}
\newtheorem{Proposition}[theorem]{Proposition}
\newtheorem{Corollary}[theorem]{Corollary}
\theoremstyle{definition}
\newtheorem{definition}[theorem]{Definition}

\newtheorem{Remark}[theorem]{Remark}
\newtheorem*{theorem*}{Theorem}
\newtheorem*{Remark*}{Remark}
\newtheorem*{Conjecture*}{Conjecture}

\newcommand{\Q}{\mathbb{Q}} % rationals
\newcommand{\C}{\mathbb{C}} % complex

\newcommand{\Proj}{\mathbb{P}}

\newcommand{\bi}{\big}

\newcommand{\Xb}{\overline{X_{1}}}

\newcommand{\Nm}{\operatorname{Nm}}

\newcommand{\SL}{\operatorname{SL}}
\newcommand{\Hu}{\mathbb{H}}

\usepackage{mathtools}
\counterwithin{equation}{section}

\usepackage{etoolbox}

\makeatletter

\patchcmd{\@settitle}
  {\uppercasenonmath\@title}
  {}
  {}
  {}

\patchcmd{\section}
  {\scshape}
  {\bfseries}
  {}
  {}
\makeatother

\begin{document}
\maketitle

\begin{abstract}
Let $F$ be a totally real field of degree $n$ and discriminant $\Delta_F$, and let $X_1(\eta)$ be the cover of the Hilbert modular variety parametrizing abelian varieties with real multiplication by $\mathcal O_F$, together with a torsion point having annihilator $\eta$. Let $L=K_{\overline X_1(\eta)}+D$ be the log-canonical bundle on a smooth toroidal compactification, and let \(H_L\) be an associated
multiplicative height. We prove that rational points on $X_1(\eta)$ become sparser as $|\mathrm{Nm}(\eta)|\to\infty$, with $(\eta,\Delta_F)=1$. More
precisely, for every number field $K$, set
  $$ N_{\eta,K}(B)=\#\{x\in X_1(\eta)(K):H_L(x)\leq B\}. $$
 If $|\mathrm{Nm}(\eta)|\ge 5^n$ and $(\eta,\Delta_F)=1$, we prove
$$ \limsup_{B\to\infty}\frac{\log\max\{1,N_{\eta,K}(B)\}}{\log B} \leq\delta_{\eta,K,n},\qquad \delta_{\eta,K,n}\ll_{[K:\mathbb Q],n}|\mathrm{Nm}(\eta)|^{-1/(2n)}. $$
  In particular, $\delta_{\eta,K,n}\to0$ uniformly when $n$ and $[K:\mathbb Q]$ are bounded and $|\mathrm{Nm}(\eta)|\to\infty$.
  
  The main geometric result is a uniform lower bound, growing with the level, for the log-canonical degree of subvarieties of $X_1(\eta)$. Combining this estimate with recent progress derived from determinant-method, due to Ellenberg--Lawrence--Venkatesh and Brunebarbe--Maculan, we obtain the sparsity result above.
  We also prove that, for sufficiently large $|\mathrm{Nm}(\eta)|$, every subvariety of $X_1(\eta)$ is of general type, and establish a higher-dimensional generalization of the geometric torsion theorem of Bakker--Tsimerman. Namely, for a family of abelian varieties with real multiplication over a quasi-projective base of arbitrary dimension, we bound the torsion subgroup of its Mordell--Weil group in terms of the canonical volume of the base, uniformly in the totally real multiplication field of fixed degree.\\
  
\noindent\textbf{Keywords.} Hilbert modular varieties, rational points, geometric torsion conjecture, hyperbolicity, level structures, abelian varieties, canonical volume.
\end{abstract}

\section*{Introduction}
Let $E$ be an elliptic curve defined over a number field $K.$ When $K=\Q,$ by the celebrated theorem of Mazur \cite{mazur1978rational} the torsion part $E(K)_{tor}$ is uniformly bounded. This theorem is generalized to arbitrary number field by Kamienny and Merel:

\begin{theorem*}(\cite{merel1996bornes,kamienny1992torsion}) Let $K$ be a number field of degree $d$ over $\Q.$ Then, there exists $N=N(d)$ such that for all elliptic curves $E/K$ we have that 
    \[
        E(K)_{tor}\subset E(K)[N]. 
    \]
\end{theorem*}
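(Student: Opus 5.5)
This is a deep theorem quoted from the literature, so the plan follows the strategy of Kamienny and Merel rather than anything developed in this paper. \textbf{First reduction: prime-power torsion.} It suffices to show two things: that a prime $p$ dividing $\#E(K)_{tor}$ is bounded in terms of $d$, and that for each such $p$ the exponent of $p$ is bounded. The exponent bound follows from the prime bound by the same method applied to the modular curves $X_1(p^k)$, or alternatively by Faltings' theorem applied to the finitely many remaining levels, together with the fact that these curves have genus growing with $k$ and gonality eventually exceeding $d$. So the heart of the argument is the following: if $E/K$ has a point $P$ of prime order $p$ with $[K:\Q]=d$, then $p\le B(d)$. The pair $(E,P)$ gives a point $x\in X_1(p)(K)$. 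Taking the $d$ Galois conjugates gives a point $x^{(d)}$ of the symmetric power $X_1(p)^{(d)}(\Q)$, and by pushing forward a point of $X_0(p)^{(d)}(\Q)$.

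\textbf{Kamienny's criterion.} Choose a small auxiliary prime $\ell\ne p$, with $\ell>2$ and $\ell$ related to $d$ (e.g. $\ell\ge3$ and $(1+\ell^{d/2})^2<p$). Reduce modulo a prime $\lambda\mid\ell$ of $K$. The torsion point $P$ forces $E$ to have bad reduction at $\lambda$. Indeed, good reduction would contradict the Hasse bound $\#\tilde E(\mathbb F_\lambda)\le(1+\ell^{d/2})^2<p$, and potentially multiplicative reduction places $x$ modulo $\lambda$ at a cusp. After applying an Atkin--Lehner involution we may assume $x^{(d)}$ and $\infty^{(d)}$ have the same reduction modulo $\ell$. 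Now consider a quotient $A$ of $J_0(p)$ with $A(\Q)$ finite, namely Merel's winding quotient $J_e$, whose finiteness of Mordell--Weil group comes from Kolyvagin--Logachev and Gross--Zagier. Consider also the map $f\colon X_0(p)^{(d)}\to A$, $y\mapsto[y-d\infty]$. Then $f(x^{(d)})$ is a rational torsion point with trivial reduction. Since torsion injects under reduction for $\ell>2$, $f(x^{(d)})=0$. If $f$ is a formal immersion at $\infty^{(d)}$ in characteristic $\ell$, then $x^{(d)}=\infty^{(d)}$, which is a contradiction because $x$ is not a cusp.

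\textbf{Formal immersion via Hecke operators.} The formal immersion condition translates, via $q$-expansions at $\infty$, into a linear-algebra statement: the images $T_1e,\dots,T_de$ of the winding element $e\in H_1(X_0(p),\mathbb Z)\otimes\Q$ under Hecke operators must be linearly independent in $\mathbb T e\otimes\mathbb F_\ell$. I expect this to be \emph{the main obstacle}. Following Merel, I would prove independence by pairing $T_ne$ against explicit Manin symbols and using the continued-fraction expansion of $n/p$. In the matrix $\bigl(\langle T_ie,\ \text{suitable symbol}_j\rangle\bigr)_{i,j\le d}$ one aims to show that the entries are triangular with unit diagonal, valid once $p$ is large relative to $d$ (Merel gets $p>d^{3d^2}$). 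This yields $p\le d^{3d^2}$.

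\textbf{Oesterlé's refinement.} For a sharper bound, Oesterlé optimizes the choice $\ell=3$ and obtains $p\le(3^{d/2}+1)^2$. Combined with the exponent bound above, this gives $N(d)$.
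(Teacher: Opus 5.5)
The paper does not prove this theorem; it is only quoted from Kamienny and Merel. Your outline is an accurate sketch of that cited argument: Kamienny's formal-immersion criterion for $X_0(p)^{(d)}\to J_e$ at an auxiliary prime $\ell>2$ with $(1+\ell^{d/2})^2<p$, finiteness of $J_e(\Q)$ via Kolyvagin--Logachev and Gross--Zagier, Merel's linear independence of $T_1e,\dots,T_de$ in $\mathbb{T}e\otimes\mathbb{F}_\ell$, and a separate prime-power step. Two details need tightening: Frey's criterion for finitely many points of degree $\le d$ requires the $\Q$-gonality of $X_1(p^k)$ to exceed $2d$, not $d$ (harmless, since it grows without bound in $k$), and before invoking $w_p$ you must rule out additive reduction at $\lambda$ (using $p>4$, $p\neq\ell$) and show that $P$ cannot reduce into the identity component (using $\Nm(\lambda)+1<p$), which is what sends every conjugate of $x$ to the single cusp that $w_p$ maps to $\infty$.
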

The arithmetic torsion conjecture predicts the same behavior for $n$-dimensional abelian varieties over number fields: 

\begin{Conjecture*}[Arithmetic Torsion Conjecture]
For every pair of integers \(d,n\geq 1\), there exists an integer
\(N=N(n,d)\) such that, for every number field \(K\) with
\([K:\mathbb{Q}]\leq d\) and every \(n\)-dimensional abelian variety \(A\)
over \(K\),
\[
A(K)_{\mathrm{tor}}\subseteq A(K)[N].
\]
\end{Conjecture*}
Beyond the case of elliptic curves, this conjecture remains widely open. Its
geometric analogue over function fields of curves has been studied in
\cite{cadoret2011weak,cadoret2012uniform,bakker2018geometric} and, more
recently, in \cite{looperYap2026,gaoGu2026}. It is natural to seek an analogue of the geometric torsion conjecture over
function fields of higher-dimensional varieties. As explained in \cite[Appendix A]{cadoret2011weak}, once the function field of the base is fixed, a torsion bound depending on that function field follows from the curve case by taking suitable hyperplane sections. Our aim is different. We seek a bound that is uniform as the base varies and depends on the base only through an intrinsic  birational invariant.

Fix a totally
real field \(F/\mathbb{Q}\) of degree \(n\) and absolute discriminant \(\Delta_F\). By an abelian variety with real
multiplication by \(\mathcal{O}_F\), we mean an \(n\)-dimensional abelian
variety \(A\) equipped with an injection $\mathcal{O}_F\hookrightarrow\operatorname{End}(A).$
We consider such abelian varieties over the function fields of
quasi-projective varieties \(Z\) of arbitrary dimension and bound the torsion
subgroups of their Mordell--Weil groups in terms of the canonical volume of
\(Z\). Here, \emph{the canonical volume} of \(Z\) is the volume of the
canonical bundle of any smooth projective variety \(\overline{Z}\) birational
to \(Z\); that is,
\[
\widetilde{\operatorname{vol}}_{Z}
:=
\operatorname{vol}(K_{\overline{Z}})
=
\limsup_{b\to\infty}
\frac{h^0(\overline{Z},bK_{\overline{Z}})}{b^m/m!},
\]
 where \(m=\dim Z\). This quantity is independent of the chosen smooth projective model. In particular,  when
\(Z\) is a curve of geometric genus \(g\), it specializes to
\(
\widetilde{\operatorname{vol}}_{Z}
=
\max\{2g-2,0\}.
\)

Our first geometric result
generalizes \cite[Theorem~A]{bakker2018geometric}:
\renewcommand*{\thetheorem}{\Alph{theorem}}
\begin{theorem}\label{Canonical vol}(Theorem \ref{Main})
For every real number \(v\geq 0\) and positive integer \(n\), there exists
an integer \(N=N(n,v)\) with the following property. For any totally real field
\(F/\mathbb Q\) of degree \(n\), any complex positive-dimensional integral
quasi-projective variety \(Z\) with canonical volume at most \(v\), and
any abelian scheme \(A\to Z\) without isotrivial directions, of relative
dimension \(n\) and with real multiplication by \(\mathcal O_F\), the
torsion part of the Mordell--Weil group is uniformly bounded:
\[
A(Z)_{\mathrm{tor}}\subseteq A(Z)[N].
\]
Explicitly, as explained in Remark \ref{explicit N}, one may take
\[N(n,v):=
\left(
\left\lceil
\left(4\pi n\max\{1,v\}\right)^{2n}
\right\rceil-1
\right)!.\]
\end{theorem}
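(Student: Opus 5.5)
The plan is to reduce to a volume lower bound on the moduli side, in the style of Bakker--Tsimerman. First, suppose $A(Z)_{\mathrm{tor}}\not\subseteq A(Z)[N]$. Since the torsion subgroup of a fiber is finite, $A(Z)_{\mathrm{tor}}$ is finite, so it contains a point $P$ whose annihilator is a nonzero ideal $\eta\subseteq\mathcal O_F$. Here we use the $\mathcal O_F$-module structure: the $\mathcal O_F$-submodule generated by $P$ is $\mathcal O_F/\eta$. Write $N(n,v)=(M-1)!$ with $M=\lceil (4\pi n\max\{1,v\})^{2n}\rceil$. If every torsion point is killed by some integer $<M$, then every such point is killed by $(M-1)!$. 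Hence some $P$ has order $\geq M$, and then $|\Nm(\eta)|\geq \operatorname{ord}(P)\geq M$. It is cleaner to pass to a cyclic $\mathcal O_F$-module and bound $|\Nm(\eta)|$ from below by the exponent, which is automatic since $\operatorname{ord}(P)$ divides $\Nm(\eta)$.

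Second, the pair $(A,P)$ gives a classifying map. After a finite étale base change to add auxiliary level structure (polarization class, and rigidification if needed), we get $\phi\colon Z\to X_1(\eta)$. The hypothesis that there are no isotrivial directions means $\phi$ is generically finite onto its image $Y=\overline{\phi(Z)}$, with $\dim Y=\dim Z=m$. Then $\operatorname{vol}(K_{\overline Z})\geq \operatorname{vol}(K_{\overline Y})$ for smooth models, because $\phi$ is generically finite and dominant onto $Y$, and volumes of canonical bundles pull back (ramification only helps). We must also check that a finite étale base change does not damage this. The cleanest route is to avoid it entirely by working with the coarse space or with a stack-level argument, since base change would multiply the volume by the degree.

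Third, the main geometric input is the uniform lower bound on the log-canonical degree of subvarieties of $X_1(\eta)$, and this is the heart of the proof. The approach is to use the Bergman/Kähler–Einstein metric on the product of upper half planes $\Hu^n$, which has negatively pinched holomorphic sectional curvature. There are two ingredients:
\begin{enumerate}
\item an Ahlfors–Schwarz comparison, showing that $K_{\overline Y}+D_Y$ dominates a fixed multiple of the restricted invariant metric, so its volume is at least $c^m$ times the hyperbolic volume of $Y$;
\item an injectivity-radius estimate, showing that the cover $X_1(\eta)$ has injectivity radius $\gtrsim \log|\Nm(\eta)|^{1/n}$ away from the cusps. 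Combined with the monotonicity formula for minimal (complex) submanifolds, this gives a hyperbolic volume of $Y$ growing like $|\Nm(\eta)|^{1/(2n)}$ per dimension.
\end{enumerate}
Producing the explicit constant $4\pi n$ is the delicate part. I would try first to prove the inequality
\[
\operatorname{vol}(K_{\overline Y}+D_Y)^{1/m}\geq \frac{|\Nm(\eta)|^{1/(2n)}}{4\pi n}
\]
via the curve case on a general curve section. This requires passing from log-canonical volume to canonical volume, and I would use that the cusps of $X_1(\eta)$ have no torsion for large level, so that $K+D$ and $K$ have comparable volumes. That comparison is the main obstacle.

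Finally, the bounds combine. We get $v\geq \operatorname{vol}(K_{\overline Z})\geq\operatorname{vol}(K_{\overline Y})\gtrsim (|\Nm(\eta)|^{1/(2n)}/4\pi n)^m$. Since $m\geq1$, if $\max\{1,v\}<|\Nm(\eta)|^{1/(2n)}/(4\pi n)$ this is a contradiction. This forces $|\Nm(\eta)|<(4\pi n\max\{1,v\})^{2n}$, contradicting $|\Nm(\eta)|\geq M$.
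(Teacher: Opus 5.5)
\textbf{Assessment.} The proposal has a genuine gap at the step you yourself flag as the main obstacle, and a second missing step before the volume bound.

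Your reduction agrees with the paper: you produce $P$ with $|\Nm(\eta)|\ge\operatorname{ord}(P)\ge M$, lift to $X_1(\eta)$, and compare canonical volumes under a generically finite map. Your Ahlfors--Schwarz comparison plays the role of Brunebarbe's Arakelov inequality, which makes $K_{\widetilde Y}+\widetilde D-\frac1n\nu^*\bigl((K_{\overline X_1(\eta)}+D)|_Y\bigr)$ pseudoeffective.

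\textbf{Passing from log-canonical to canonical volume.} Such an inequality controls only $K_{\widetilde Y}+\widetilde D$, while the theorem concerns $\operatorname{vol}(K_{\widetilde Y})$, and your proposed fix does not work. The absence of torsion at the cusps (for $|\Nm(\eta)|>4^n$ every parabolic element is unipotent) is a qualitative fact: it makes $K+D$ the correct log-canonical bundle with no orbifold correction. It says nothing about how much boundary can be subtracted while keeping bigness. Moreover, a comparison on the ambient $\overline X_1(\eta)$ would not suffice, since you need it on every $Y$.

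The missing input is quantitative positivity modulo the boundary, which comes from the growth of the uniform cusp depth. This is Bakker--Tsimerman's Theorem~E: $K_{\overline X_1(\eta)}+(1-\lambda)D$ is ample modulo $D$ once $|\Nm(\eta)|>(2\pi\lambda/n)^{2n}$. Ampleness modulo $D$ restricts to a big class on every $Y\not\subset D$. The paper's argument runs as follows.
\begin{enumerate}
\item With $\lambda=2n$, the class $(K-(2n-1)D)|_Y$ is big.
\item Since $\nu^*(D|_Y)-\widetilde D$ is effective, the class $K_{\widetilde Y}-\frac1n\nu^*\bigl((K-(n-1)D)|_Y\bigr)$ is pseudoeffective.
\item Writing $K-(n-1)D=\frac12(K-(2n-1)D)+\frac12(K+D)$ then gives $\widetilde{\operatorname{vol}}_Y\ge(2n)^{-m}(K+D)^m\cdot Y$.
\end{enumerate}
Your alternative of working on general curve sections cannot replace this. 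Lower bounds on curves in $Y$ do not bound the volume of $Y$ from below, so the estimate must be made directly in dimension $m$.

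\textbf{Showing that $Y$ meets the thick part.} The injectivity radius of $X_1(\eta)$ is large only on the thick part. This is the complement of the canonical horoballs, whose uniform depth is at least $|\Nm(\eta)|^{1/2}$. These horoballs grow with the level, so before applying the monotonicity (Hwang--To) bound you must show that every $Y\not\subset D$ meets the thick part.

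The paper proves this in Theorem~\ref{alg thick}:
\begin{enumerate}
\item If $Y\cap X_1(\eta)$ lay in the horoballs, then by connectedness and disjointness it would lie in a single one.
\item Its monodromy would then land in a cusp stabilizer, whose Zariski closure is solvable.
\item Andr\'e--Deligne semisimplicity forces the connected algebraic monodromy group to be trivial. The monodromy is therefore finite, hence trivial because $\Gamma_1(\eta)$ is torsion-free.
\item The map then lifts to the bounded domain $\Hu^n\simeq\Delta^n$. Its coordinates are bounded holomorphic functions, so they extend over a compact resolution and are constant, a contradiction.
\end{enumerate}
Without an argument of this kind, your lower bound for the hyperbolic volume of $Y$ is not justified.
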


For a fixed totally real field \(F\), there is a Hilbert modular variety
\(X(1)\) parametrizing abelian varieties with real multiplication by
\(\mathcal O_F\).\footnote{For the polarization convention, see Remark~\ref{polarization-data}.} See Section~\ref{hilbert} for more details. A family
\(A\to Z\) of such abelian varieties determines a moduli map
\[
\mu_A\colon Z\longrightarrow X(1).
\]
We say that \(A\to Z\) has \emph{no isotrivial directions} if
\(\mu_A\) is generically finite onto its image. When \(Z\) is a curve,
this condition is equivalent to non-isotriviality. In higher dimensions,
it excludes families pulled back from a lower-dimensional base. This restriction is essential for the canonical volume bound. Indeed, if \(B\to C\) is any non-isotrivial family over a curve, then its pullback to \(C\times\mathbb P^1\) remains non-isotrivial, but \(C\times\mathbb P^1\) has canonical volume zero.

\begin{Remark*}[\textbf{Non-reduction to the curve case}]
Theorem~\ref{Canonical vol} does not follow from the curve case.
Indeed, restricting the family to a curve \(C\subset Z\), one may obtain a bound in
terms of the genus or gonality of \(C\). To recover
Theorem~\ref{Canonical vol}, one would need the resulting lower bounds for
these curve invariants to imply a lower bound for the
canonical volume of \(Z\). There is no general way to pass from such curvewise bounds to a lower bound for the canonical volume of \(Z\). For example, let \(Y\) be a very general abelian
variety of dimension \(m\geq 4\). Then every curve
\(C\subset Y\) has gonality at least
\(\left\lfloor\frac{m}{2}\right\rfloor+2\)
by \cite[Corollary~4.7]{martin2020gonality}. Nevertheless,
\(Y\) has canonical volume zero. This shows that the
minimum gonality of curves can be arbitrarily large while the canonical
volume of the ambient variety remains zero.
\end{Remark*}
 For any ideal \(\eta\subset\mathcal O_F\), the torsion level cover
\(X_1(\eta)\) of $X(1)$ parametrizes abelian varieties with real multiplication
by \(\mathcal O_F\), together with a point whose annihilator is
\(\eta\). Accordingly, a torsion section \(x\in A(Z)\) with
\(
\operatorname{Ann}_{\mathcal O_F}(x)=\eta
\)
induces a lift of the moduli map
\[
\mu_{A,x}\colon Z\longrightarrow X_1(\eta).
\]
By a subvariety of $X_1(\eta)$ we mean a positive-dimensional closed,
reduced, irreducible subvariety of a smooth projective toroidal
compactification $\Xb(\eta)$ that is not contained in the boundary.
Such a subvariety need not be smooth; it is of general type if a smooth
projective model has positive canonical volume. Similar to 
Theorem~\ref{Canonical vol}, we obtain the following:
\begin{Corollary}(Corollary \ref{general type})\label{general type int}
Let $F/\mathbb Q$ be a totally real field of degree $n$. Let
$X_1(\eta)$ be the cover of the Hilbert modular variety parametrizing
abelian varieties with real multiplication by $\mathcal O_F$,
together with a torsion point having annihilator $\eta$.

Every subvariety of $X_1(\eta)$ is of general type provided that
\(
|\Nm(\eta)|>(2\pi)^{2n}.
\)
\end{Corollary}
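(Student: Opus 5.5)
The plan is to prove that, once $|\Nm(\eta)|>(2\pi)^{2n}$, the log-canonical bundle $L=K_{\Xb(\eta)}+D$ restricts to every subvariety $V$ as something bigger than a small multiple of the automorphic bundle. Then positivity of $K_{\widetilde V}$ for a resolution $\widetilde V\to V$ follows from a Bakker--Tsimerman type ``hyperbolicity with room'' argument.

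Let $\omega$ be the Hodge (automorphic) line bundle on $\Xb(\eta)$. On the interior, $\omega$ carries the invariant metric induced from the product of upper half planes $\Hu^n$. The ball-packing and volume argument of \cite{bakker2018geometric} uses the geometry of the cusps of $X_1(\eta)$: the torsion level structure forces every cusp neighbourhood to have width at least of order $|\Nm(\eta)|^{1/n}$ in each coordinate. In the Hilbert setting this gives a uniform ``injectivity'' statement: any point of $X_1(\eta)$ has an embedded polydisc, or a cusp region, of radius growing with the level.

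From this I would derive the following key inequality, valid for any positive-dimensional $V\subset \Xb(\eta)$ not contained in $D$:
\[
\bigl(K_{\widetilde V}+D_{\widetilde V}\bigr) - \epsilon\,\omega|_{\widetilde V}\ \text{is big (or pseudo-effective)},\qquad \epsilon = 1-\frac{2\pi}{|\Nm(\eta)|^{1/(2n)}} .
\]
To get it, compare the Kähler form $\omega$ with the Bergman/Kobayashi metric on $V$. By the Ahlfors--Schwarz lemma the curvature of the invariant metric restricted to $V$ controls $K_V$. Near the boundary, the ``room'' argument controls the log term: one shows that sections of $\omega$ vanish to high order along $D$ relative to the level. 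The constant $2\pi$ arises, as in the elliptic case, from comparing the cusp width with the $2\pi$ from $q=e^{2\pi i z}$, raised to the $2n$ coordinates. This should be exactly the geometric lower bound for log-canonical degrees announced in the abstract, proved earlier as the main geometric theorem. I will assume it in the form that $K_{\widetilde V}+D_{\widetilde V}-\frac{2\pi}{|\Nm(\eta)|^{1/(2n)}}D_{\widetilde V}$ is big. With that, the hypothesis $|\Nm(\eta)|>(2\pi)^{2n}$ makes the coefficient of $D$ strictly less than zero in the subtraction, so $K_{\widetilde V}+(1-\epsilon')D_{\widetilde V}$ with $1-\epsilon'<1$ is big.

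To pass from the boundary-twisted bundle to $K_{\widetilde V}$ itself, use that $\omega$ is big and nef on $\Xb(\eta)$ and trivial-ish (numerically dominated) along $D$. Writing $K_{\widetilde V}= (K_{\widetilde V}+aD)-aD$ and absorbing $aD$ via $\omega\ge c D$-type comparisons near the cusps, one concludes $K_{\widetilde V}$ is big, so $\widetilde V$ has positive canonical volume. Since $V$ ranges over all subvarieties not in the boundary, and the argument is uniform, the corollary follows. Alternatively, one can apply Theorem~\ref{Canonical vol} in contrapositive form. If some $V$ had canonical volume $0$, the universal family over $V$ (which has no isotrivial directions because $V\to X(1)$ is finite) carries a torsion section with annihilator $\eta$, of order $\ge |\Nm(\eta)|^{1/n}$. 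The proof of Theorem~\ref{Canonical vol}, specialized to $v=0$ and tracked explicitly, shows that this is impossible once $|\Nm(\eta)|>(2\pi)^{2n}$.

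The main obstacle is the boundary: ensuring that the log term $D$ can be removed, i.e. that bigness of the log-canonical bundle with the sharpened coefficient forces bigness of $K_{\widetilde V}$, including when $V$ meets the cusps in complicated ways on a non-smooth $V$. I would handle this first by working on a log resolution of $(V,V\cap D)$, and by using the explicit toroidal $q$-expansion description of $\omega$ near each cusp to get the needed vanishing order along $D$.
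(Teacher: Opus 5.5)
There is a genuine gap. The level-dependent positivity is attached to the wrong object, and the step that actually removes the boundary is never supplied.

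\textbf{Your key inequality is false.} Write $\nu\colon\widetilde V\to V$ for a log resolution, $\widetilde D=(\nu^*(D|_V))_{\mathrm{red}}$, and $\omega=\det\mathbb E$, so that $K_{\Xb(\eta)}+D=2\omega$. The comparison you want is Brunebarbe's Arakelov inequality (Proposition~\ref{Arakelov}): $K_{\widetilde V}+\widetilde D-\frac2n\nu^*(\omega|_V)$ is pseudoeffective. Its constant does not depend on the level, and it is sharp. For curves $C$ coming from the diagonal $\Hu\hookrightarrow\Hu^n$, which occur in every $X_1(\eta)$, Gauss--Bonnet and \eqref{degree-volume} give $(K_{\Xb(\eta)}+D)\cdot C=n\deg(K_{\widetilde C}+\widetilde D)$. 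So for $n\ge3$ and large level, your class with $\epsilon\to1$ has negative degree on $\widetilde C$.

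\textbf{The assumed bigness statement does not give $K_{\widetilde V}$ big.} You assume $K_{\widetilde V}+(1-\epsilon')\widetilde D$ is big with $\epsilon'=2\pi|\Nm(\eta)|^{-1/(2n)}$. Under your hypothesis the boundary coefficient $1-\epsilon'$ lies in $(0,1)$; it is not negative. Bigness of $K+a\widetilde D$ with $a>0$ says nothing about $K$; think of $\mathbb P^1$ with three boundary points. This is also not what the log-canonical degree bound of Theorem~\ref{log deg} provides. That result concerns $(K_{\Xb(\eta)}+D)^m\cdot Z$ and is not used for this corollary.

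\textbf{The absorption step is the hard part, and you leave it unproved.} Nefness and bigness of $\omega$ only make $\omega-cD$ big for some small, uncontrolled $c>0$. What you need is that $\frac2n\omega-D$ is big on $V$, i.e.\ a specific constant matched to the Arakelov coefficient.

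\textbf{The missing input} is Bakker--Tsimerman's Theorem~E (Proposition~\ref{Tsi}). This is a positivity statement on the ambient toroidal compactification, proved using the depth of the cusps: $K_{\Xb(\eta)}+(1-\lambda)D$ is ample modulo $D$ once $|\Nm(\eta)|>(2\pi\lambda/n)^{2n}$.

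\begin{enumerate}
\item Take $\lambda=n$; this is exactly where the threshold $(2\pi)^{2n}$ comes from. It gives $2\omega-nD=K_{\Xb(\eta)}-(n-1)D$ ample modulo $D$.
\item By Lemma~\ref{ample rest}, this divisor is big on every $V\not\subset D$.
\item The divisor $\nu^*(D|_V)-\widetilde D$ is effective, and $\frac1n(K+D)-D=\frac1n(K-(n-1)D)$. Hence the Arakelov inequality shows that $K_{\widetilde V}-\frac1n\nu^*\bigl((K_{\Xb(\eta)}-(n-1)D)|_V\bigr)$ is pseudoeffective.
\item Therefore $\operatorname{vol}(K_{\widetilde V})\ge n^{-m}\operatorname{vol}\bigl((K_{\Xb(\eta)}-(n-1)D)|_V\bigr)>0$ with $m=\dim V$. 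This is the paper's proof.
\end{enumerate}

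\textbf{Your fallback does not reach the stated range.} Going through Theorem~\ref{Canonical vol} means using Theorem~\ref{Main} with $v=0$, which requires $|\Nm(\eta)|\ge M(n,0)>(4\pi)^{2n}$. Its proof uses $\lambda=2n$ to get a quantitative volume bound, and converting to a bound on the order of the torsion point only loses more. Reaching $(2\pi)^{2n}$ requires running the argument with $\lambda=n$, which is the argument above.
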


The variety \(X_1(\eta)\) is defined over \(\Q\), and, given Corollary~\ref{general type int}, the Bombieri--Lang conjecture predicts that if $|\Nm(\eta)|> (2\pi)^{2n},$ then for every number field $K,$
\[
|X_1(\eta)(K)|<\infty.
\]

Although the finiteness predicted by the Bombieri--Lang conjecture remains largely open, combining our effective estimate in Theorem~\ref{log deg} with the recent results of Brunebarbe--Maculan~\cite{brunebarbe2022counting} gives us an unconditional effective bound on the growth rate of \(K\)-rational points in the following sense:

Fix a number field \(K\) and an ideal \(\eta\) relatively prime to the discriminant \(\Delta_F\). Let \(\Xb(\eta)\) be a smooth toroidal compactification of \(X_1(\eta)\), with boundary divisor \(D\), and let
\(X_1(\eta)^*\) be the Baily-Borel compactification of $X_1(\eta).$
Let \(L:=K_{\Xb(\eta)}+D\) be the log-canonical bundle on  \(\Xb(\eta)\) and
let \(\pi:\Xb(\eta)\longrightarrow X_1(\eta)^*\)
be the Baily--Borel contraction. Choose \(b\geq1\) and a very ample line bundle \(A\) on \(X_1(\eta)^*\), defined over \(\mathbb Q\), such that
\(\pi^*A_{\mathbb C}\simeq bL.\) A fixed projective embedding defined by \(A\) determines a multiplicative height \(H_A\) on \(X_1(\eta)^*(\overline{\mathbb Q})\). We define
\[H_L(x):=H_A(x)^{1/b},
\qquad
x\in X_1(\eta)(\overline{\mathbb Q}),
\]
and keep this height fixed. See Section~\ref{sparsity} for further details.

\begin{theorem}(Theorem \ref{log grow}) \label{main sparsity}
Assume that \(|\Nm(\eta)|\geq5^n\) and that \(\eta\) is relatively prime to the discriminant \(\Delta_F\).
For every number field \(K\), set
\(
N_{\eta,K}(B)
=\#\{x\in X_1(\eta)(K):H_L(x)\leq B\}.
\)
Then
\[
\limsup_{B\to\infty}
\frac{\log\max\left\{1,N_{\eta,K}(B)\right\}}
{\log B}
\leq \delta_{\eta,K,n},
\]
where
\[
\delta_{\eta,K,n}
=\frac{\sqrt5[K:\mathbb Q]n(n+3)}
{|\Nm(\eta)|^{1/(2n)}}.
\]
\end{theorem}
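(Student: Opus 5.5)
We plan to deduce the estimate from a uniform lower bound on log-canonical degrees of subvarieties (the main geometric input, Theorem~\ref{log deg}) combined with the determinant-method counting theorem of Brunebarbe--Maculan~\cite{brunebarbe2022counting}. That theorem applies to a quasi-projective variety $X$ over a number field with a big line bundle $L$. Roughly, if every subvariety $Y\subseteq X$ not contained in the boundary satisfies a degree-versus-dimension bound of the form $\deg_L(Y)\geq \varepsilon^{-1}\dim Y$, then the number of $K$-points of $L$-height at most $B$ is $O(B^{c\,[K:\mathbb Q]\,\varepsilon'})$. Here $\varepsilon'$ is essentially $\dim X$ times $\varepsilon$ (up to explicit constants). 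The whole argument therefore reduces to producing the needed degree bound with explicit dependence on $|\Nm(\eta)|$, and then tracking constants.

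\textbf{Step 1: the degree bound.} I would take Theorem~\ref{log deg} in the form: for every positive-dimensional subvariety $Y\subseteq X_1(\eta)$ of dimension $m$,
\[
(K_{\Xb(\eta)}+D)^{m}\cdot Y \;\geq\; \Bigl(\tfrac{|\Nm(\eta)|^{1/(2n)}}{c_n}\Bigr)^{m}\deg(\ldots),
\]
or equivalently a lower bound on the Seshadri-type ratio $\deg_L(Y)^{1/m}$ growing like $|\Nm(\eta)|^{1/(2n)}$. Heuristically, the invariant metric on the Hilbert modular variety has curvature bounded in terms of the injectivity radius. The torsion level forces the injectivity radius at every point of the level-$\eta$ cover to be $\gg \log|\Nm(\eta)|^{1/n}$, and this yields a volume/degree lower bound for subvarieties (as in Bakker--Tsimerman and Hwang--To). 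The constants $\sqrt5$ and the hypothesis $|\Nm(\eta)|\geq 5^n$ suggest the bound is of the shape $\deg_L(Y)\geq \dim Y\cdot |\Nm(\eta)|^{1/(2n)}/\sqrt5$, valid once the level is large enough to make the relevant ratio exceed $1$. The coprimality $(\eta,\Delta_F)=1$ is needed so that $X_1(\eta)$ is a fine moduli space with smooth model over $\mathbb Z[1/\Nm(\eta)\Delta_F]$ and the level structure is étale. This is what Brunebarbe--Maculan requires to handle the integral points of the model.

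\textbf{Step 2: apply the counting theorem.} Next I would verify the hypotheses of \cite{brunebarbe2022counting}. These are: $L$ is big and nef on $\Xb(\eta)$ and the pullback of an ample bundle from the Baily--Borel compactification $X_1(\eta)^*$, which holds by Mumford's results; and the height $H_L$ is the one defined via $H_A^{1/b}$. Plugging in the degree bound gives an exponent of the form $[K:\mathbb Q]\cdot C(\dim X)\cdot \sqrt5/|\Nm(\eta)|^{1/(2n)}$. Here $\dim X_1(\eta)=n$, and the combinatorial constant from their theorem is $C(n)=n(n+3)$, which arises from $\dim X\cdot(\dim X+3)$ in the Ellenberg--Lawrence--Venkatesh determinant estimate. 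This yields exactly $\delta_{\eta,K,n}$. Taking $\limsup \log N/\log B$ then finishes the proof.

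\textbf{Main obstacle.} The hard step is Step 1: a degree lower bound valid uniformly for \emph{all} subvarieties, including those meeting the boundary, with the explicit constant. The interior part would come from comparing $L$ with the Kähler form of the Bergman metric. Then one uses the injectivity-radius lower bound from the level-$\eta$ torsion structure (a nontrivial element of $\Gamma_1(\eta)$ moves points by distance $\gtrsim \log(|\Nm(\eta)|^{1/n})$), together with a monotonicity/volume-of-balls estimate in the ball of that radius. Near the cusps, I would try to handle this by working with the toroidal boundary divisor and using that the cusp neighbourhoods are thin once the level is large. Here the threshold $5^n$ should enter.
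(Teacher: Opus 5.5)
Your architecture matches the paper's: Theorem~\ref{log deg} is fed into Brunebarbe--Maculan through the Baily--Borel embedding. Once Theorem~\ref{log deg} is cited, however, the rest of this proof is quantitative bookkeeping, and that is where your proposal goes wrong.

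\textbf{The counting theorem and its input.} Theorem~\ref{counting} concerns a projective $Y\subset\Proj^N_K$ whose subvarieties $Z\not\subset E$ satisfy $\deg Z\geq\rho^{\dim Z}$ for an \emph{integer} $\rho\geq1$. Its conclusion has exponent $(1+\epsilon)[K:\Q]n(n+3)/(2\rho)$. A bound linear in $\dim Y$, such as your guessed $\deg_L(Y)\geq\dim Y\cdot|\Nm(\eta)|^{1/(2n)}/\sqrt5$, does not verify this hypothesis. It is also not what Theorem~\ref{log deg} provides. With $t=|\Nm(\eta)|^{1/(2n)}$ and $s_\eta=\sinh^2\bigl(\tfrac12\operatorname{arsinh}(\tfrac{t}{2})\bigr)$, the available input is $L^m\cdot Z\geq(2s_\eta)^m$.

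The missing steps are these:
\begin{enumerate}
\item Apply the counting theorem to $X_1(\eta)^*\subset\Proj^N$ embedded by $A$, with $E=X_1(\eta)^*\setminus X_1(\eta)$. It should not be applied to $\Xb(\eta)$, where $L$ is only nef and big.
\item Transfer the degree bound by the projection formula on strict transforms: $\deg_A(Z)=b^mL^m\cdot\widetilde Z\geq(2bs_\eta)^m$.
\item Since $\rho=2bs_\eta$ need not be an integer, pass through Veronese re-embeddings as in Lemma~\ref{semiample counting}. This costs the factor $1+\epsilon$, which is why only a $\limsup$ is asserted.
\item Use $H_L=H_A^{1/b}$ to cancel $b$, giving exponent $(1+\epsilon)[K:\Q]n(n+3)/(4s_\eta)$.
\end{enumerate}
The $\sqrt5$ then comes from the elementary inequality $4s_\eta=\sqrt{t^2+4}-2\geq t/\sqrt5$ for $t\geq\sqrt5$, with equality at $t=\sqrt5$. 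This is exactly where $|\Nm(\eta)|\geq5^n$ enters, besides Proposition~\ref{inj thick}(ii).

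Your reverse-engineered bound does not reproduce $\delta_{\eta,K,n}$ ``exactly''. Even read as $\deg_L(Y)^{1/m}\geq t/\sqrt5$, it would give $\delta_{\eta,K,n}/2$. At $t=\sqrt5$ it also exceeds what Theorem~\ref{log deg} gives, since $2s_\eta=1/2$ there.

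\textbf{Role of the coprimality hypothesis.} You misidentify what $(\eta,\Delta_F)=1$ is for. Theorem~\ref{counting} counts \emph{rational} points of a projective variety off a closed subset. No integral model, fine moduli space or \'etaleness enters. The point of the paper (see the final remark of Section~\ref{sparsity}) is precisely to avoid the integral-point/Chevalley--Weil route, which only bounds integral points.

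In the paper, coprimality is used only to meet Dimitrov--Tilouine's hypothesis (NT), together with $|\Nm(\eta)|\geq5^n$ to exclude $\eta\mid(2)$ and $\eta\mid(3)$. This makes $X_1(\eta)^*$ and a very ample $A$ with $\pi^*A\simeq bL$ available over $\Q$. Mumford's results give this only over $\C$, which does not define $H_L$ on $K$-points.

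\textbf{A side remark.} Your ``main obstacle'' paragraph is inaccurate, though not needed here. The injectivity radius of $X_1(\eta)$ is not bounded below at every point: parabolic elements make it tend to $0$ in the cusps. Theorem~\ref{log deg} holds because every subvariety meets the thick part (Theorem~\ref{alg thick}), where Proposition~\ref{inj thick} applies.
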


\begin{Remark*}[\textbf{Sparsity of rational points}]
In Theorem \ref{main sparsity}, as \(|\Nm(\eta)|\to\infty\) through ideals prime to \(\Delta_F\), the exponent \(\delta\) tends to zero, uniformly when \([K:\mathbb Q]\) and $n$ are bounded. Therefore, as we go higher in the tower \(X_1(\eta)\), its \(K\)-rational points become sparser.
\end{Remark*}

\subsection*{Previous results and comparison} 
\begin{enumerate}[leftmargin=*]
\item \textbf{Geometric torsion over curve bases.}
Cadoret--Tamagawa
\cite{cadoret2011weak,cadoret2012uniform} proved a fixed-family version
of the geometric torsion conjecture. Their approach uses
$\ell$-adic monodromy and uniform open-image methods. Bakker and
Tsimerman \cite{bakker2018geometric} proved the geometric torsion
conjecture for abelian varieties with real multiplication, with bounds in
terms of the genus of the base curve and, after fixing the field of
multiplication, its gonality. Their proof further develops the technique initiated by Hwang and To in \cite{hwang2002volumes,Hwangpolydisk,HwangSeshadri}. More recently, Looper and Yap \cite{looperYap2026} proved the
geometric torsion conjecture in characteristic zero for arbitrary
traceless abelian varieties, with a bound depending on the gonality of the base curve. Their approach
follows the strategy of Hindry--Silverman. Gao and Gu
\cite{gaoGu2026} subsequently gave a different proof, inspired by
Vojta's method, and obtained explicit genus-dependent bounds.

Our approach is closer in spirit to that of Bakker--Tsimerman. We
establish quantitative hyperbolicity properties of torsion level covers
of Hilbert modular varieties by obtaining lower bounds for the canonical
volumes of their subvarieties. These hyperbolicity results also show that
the Bombieri--Lang conjecture predicts strong finiteness properties for
rational points on sufficiently deep torsion level covers. Since our
estimates apply to subvarieties of arbitrary dimension, they also yield
quantitative sparsity results for rational points on these covers. Such
sparsity results seem not to be within reach of methods that study only curves
in the moduli space.

\item \textbf{Full level versus torsion level structures.}
Hyperbolicity in towers of full-level congruence covers of
locally symmetric varieties has been studied extensively; see, for
example,
\cite{nadel1989nonexistence,hwangTo2006level,
brunebarbe2020increasing,brunebarbe2020strong,
abramovich2018level,abramovich2017level,wong2025,wongYeung2025caratheodory}, and \cite{kadets2022level} in positive
characteristic. Brunebarbe's results in particular give canonical volume growth for
subvarieties in the towers of covering with full level structure. A crucial mechanism in the full level structure is that the covering maps ramify
to increasingly high order along every boundary component. This
ramification allows one to use the positivity gained along the tower to
deduce hyperbolicity.

The torsion level covers considered here behave differently.
The space \(X_1(\eta)\) parametrizes a single torsion point rather than a
full level structure, and the map
\(
X_1(\eta)\to X(1)
\)
does not have uniformly increasing ramification at all cusps. This
phenomenon already occurs for modular curves: the map
\(X_1(p)\to X(1)\) has \((p-1)/2\) cusps that are unramified; see
\cite[p.~26]{shimura1971introduction} and
\cite{ogg1972rational}. Thus, the standard argument for the full level
covers cannot be applied directly to the torsion level covers.

 \item\textbf{Rational points versus integral points.} 
It is observed in \cite{ellenberg2023sparsity,brunebarbe2022counting} that if one has control over the degree of all subvarieties, the bound on the growth rate of rational points improves in the strategy of Bombieri-Pila \cite{bombieri1989number} and Heath-Brown \cite{heath2002density}. However, to get the lower bound on the degree of subvarieties, they passed to \'etale covers and this restricts them to obtain a result only on the integral points, rather than rational points. The point is that for rational points on a nonproper variety, the associated
fiber torsors need not be unramified outside a common finite set of places,
so one cannot in general reduce to finitely many twists. For integral points,
by contrast, such uniform ramification control is available
(see Remark~\ref{Chevalley--Weil} for a precise discussion).

Our intrinsic approach has the advantage that it does not require passage to a cover to raise the degree of subvarieties and hence we can get the bound on the growth rate of rational points.
    In general, bounding the growth rate of rational points on a quasi-projective variety is more difficult than bounding the growth rate of integral points. For example, on $X=\mathbb{P}_{K}^1\setminus \{0,1,\infty\},$ there are infinitely many $K$-rational points, however, there are only finitely many integral points on $X$ because of the famous theorem of Siegel and the growth rate of integral points in this case is zero (see \cite[Remark 3.3]{brunebarbe2022counting}).

\item \textbf{Kodaira dimension of Hilbert modular varieties.}
Tsuyumine~\cite{tsuyumine1985kodaira} proved that Hilbert modular varieties
(without level structure) are of general type when \(n>6\), by constructing
pluricanonical forms from Hilbert modular forms and controlling their vanishing
along the cusps and exceptional divisors. His estimates were later used in
\cite[Theorem~D]{cadorel2019hyperbolicity} to show, up to finitely many exceptions, all subvarieties are of general type outside a
proper exceptional locus. This approach, however, does not appear to identify
that exceptional locus.

Bakker--Tsimerman~\cite[Corollary~G]{bakker2018geometric} recovered
Tsuyumine's result by a different method. Building on their work, we show
that, after passing to an effectively computable sufficiently deep level,
every subvariety not contained in the boundary divisor \(D\) is of general
type. Thus, unlike the exceptional locus in \cite[Theorem~D]{cadorel2019hyperbolicity}, the exceptional locus in our result is explicitly controlled: it is contained in the boundary divisor \(D\).
\end{enumerate}
\subsection*{Strategy of proofs} 
Theorem \ref{Canonical vol}:
 The core idea is to identify a region in the locally symmetric space that
is both ``algebraically thick'' and ``geometrically thick'': every
subvariety must intersect this region, and its injectivity radius has a
lower bound tending to infinity on the tower of torsion covers. Once such a region is found, we apply
a theorem of Hwang--To to obtain lower bounds for the hyperbolic volumes
of subvarieties. The passage from hyperbolic volume to canonical volume
is provided by the higher-dimensional Arakelov inequality recently
proved by Brunebarbe \cite[Theorem~1.6]{brunebarbe2020increasing}. To
apply this inequality, we must subtract a fixed multiple of the boundary
divisor from the log-canonical bundle. This is the only step for which we
use the earlier result of Bakker--Tsimerman.

In this paper,  the thick part is the complement of the canonical
horoball neighborhoods of the cusps. In Theorem~\ref{alg thick}, we prove,
using the monodromy of subvarieties, that every subvariety of
\(X_1(\eta)\) meets this thick part. In Proposition~\ref{inj thick}, we
show that, provided \(X_1(\eta)\) has no elliptic points, the injectivity
radius of its thick part has a lower bound depending only on
\([F:\mathbb Q]\) and \(\lvert\operatorname{Nm}(\eta)\rvert\). This
estimate is uniform in \(F\); it depends on the field of multiplication
only through its degree \(n\), and is independent of the choice of
toroidal compactification.

Compared with the full-level results
mentioned above, our ``increasing hyperbolicity'' results arise from
the growth of the injectivity radius on the thick part, rather than
from ramification at the cusps. 
Compared with the ball quotient case in
\cite{memariansorkhabi2025volumes}, the thick region for Hilbert modular
varieties is defined differently. The key properties required of the
thick region are the same, but its definition and the arguments
used to establish these properties are somewhat different. 

Theorem \ref{main sparsity}: We combine the estimate for log-canonical
degrees from Theorem~\ref{log deg} with Brunebarbe--Maculan's recent
refinement of the determinant method in \cite[Theorem 4.4]{brunebarbe2022counting}.

\section*{Acknowledgments}
I would like to thank my advisor Jacob Tsimerman
for many enlightening discussions and his constant support. Also, I would like to thank Daniel Litt who asked me about the uniformity of Theorem \ref{Canonical vol} which led to a correction of the proof of this theorem.  

The author used GPT-5.6 Sol during the preparation of this manuscript to assist with language editing, polishing and organization.

\numberwithin{theorem}{section}
\section{Hilbert modular varieties}\label{Back}

We recall the setup for Hilbert modular varieties, following \cite[\S\S1 and~3]{bakker2018geometric}. In particular, we adopt the conventions of \cite[\S1]{bakker2018geometric} regarding cusps, their neighborhoods, and toroidal compactifications.  We use the notation and definitions introduced here throughout the paper without further reference.
All estimates below are uniform in the choice of the projective $\mathcal O_F$-module $M$.

\subsection{Hilbert modular varieties}\label{hilbert}
Let $\mathbb{H}=\{z\in\C\mid \operatorname{Im} z>0\}$ denote the upper half-plane, and let
\[\mathbb H^n:=\underbrace{\mathbb H\times\cdots\times\mathbb H}_{n \ \text{times}}\]
be the product of \(n\) copies of the upper half plane. The group \(G=\SL_2(\mathbb R)^n\) acts holomorphically on
\(\mathbb H^n\) by M\"obius transformations component-wise.

Let $F$ be a totally real field of degree $n$, with ring of integers $\mathcal O_F$ and real embeddings $\sigma_j:F\hookrightarrow\mathbb R$, $j=1,\ldots,n$; write \(\mathfrak d_F\) for its different and \(\Delta_F=|\Nm(\mathfrak d_F)|\) for its absolute discriminant. Fix a projective $\mathcal O_F$-module $M$ of rank $2$. Since $\mathcal O_F$ is a Dedekind domain, there is a nonzero fractional ideal $\mathfrak a$ such that $M\cong\mathcal O_F\oplus\mathfrak a$. Replacing $\mathfrak a$ by an integral ideal in the same ideal class, we may assume that $\mathfrak a\subset\mathcal O_F.$ The Hilbert modular group associated with \(M\) is
\[
\Gamma(1):=\operatorname{SL}(M)
=
\left\{
g\in\operatorname{Aut}_{\mathcal O_F}(M)
\;\middle|\;
\operatorname{det}_{F}(g)=1
\right\},
\]
where \(\det_F(g)\) denotes the determinant of the induced
\(F\)-linear automorphism of \(M\otimes_{\mathcal O_F}F\).
Relative to
\(
M=\mathcal O_F\oplus\mathfrak a,
\)
its elements have the form
\[
g=
\begin{pmatrix}
\alpha&\beta\\
\gamma&\delta
\end{pmatrix},
\qquad
\alpha,\delta\in\mathcal O_F,\quad
\beta\in\mathfrak a^{-1},\quad
\gamma\in\mathfrak a,\quad
\alpha\delta-\beta\gamma=1.
\]
For each $j$, the identification
\(
M_j:=M\otimes_{\mathcal O_F,\sigma_j}\mathbb R\cong\mathbb R^2
\)
sends $g$ to
\[
g_j=
\begin{pmatrix}
\sigma_j(\alpha)&\sigma_j(\beta)\\
\sigma_j(\gamma)&\sigma_j(\delta)
\end{pmatrix}
\in\operatorname{SL}_2(\mathbb R).
\]
The resulting homomorphism
\[
\Gamma(1)\longrightarrow\operatorname{SL}_2(\mathbb R)^n,
\qquad
g\longmapsto(g_1,\ldots,g_n),
\]
is injective and has arithmetic image. We henceforth identify $\Gamma(1)$ with this image. It acts on $\Hu^n$ by
\[
g\cdot(z_1,\ldots,z_n)
=
\left(
\frac{\sigma_j(\alpha)z_j+\sigma_j(\beta)}
     {\sigma_j(\gamma)z_j+\sigma_j(\delta)}
\right)_{j=1}^n,
\]
and we set
\[
X(1):=\Gamma(1)\backslash\Hu^n.
\]
We regard $X(1)$ with its natural orbifold structure, while its underlying coarse analytic space is the
usual Hilbert modular variety.
More generally, for a finite-index subgroup $\Gamma\subset\Gamma(1)$, we write
\[
X_\Gamma:=\Gamma\backslash\Hu^n.
\]
Since $\Gamma$ is an arithmetic lattice, the quotient $X_\Gamma$ is a
quasi-projective variety. Its cusps are the $\Gamma$-equivalence classes
in $\mathbb P^1(F),$ of which there are only finitely many. By the
Baily--Borel theorem \cite{bb}, $X_\Gamma$ admits a normal projective
compactification
\(
X_\Gamma^*
\)
obtained by adjoining one point for each cusp.
If $\Gamma$ is torsion-free, then $X_\Gamma$ is a smooth complex variety and $X_\Gamma\to X(1)$ is a finite orbifold cover.

Over \(\mathbb C\), the Hilbert modular stack \(X(1)\) associated with
\(\Gamma(1)=\operatorname{SL}(M)\) is naturally identified with the
moduli stack of abelian varieties \(A\) of dimension \(n\), equipped
with real multiplication
\(
\iota:\mathcal O_F\hookrightarrow\operatorname{End}(A),
\)
such that
\[
H_1(A,\mathbb Z)\cong M
\]
as \(\mathcal O_F\)-modules, where the \(\mathcal O_F\)-module structure
on \(H_1(A,\mathbb Z)\) is induced by \(\iota\).

\begin{Remark}\label{polarization-data}
Strictly speaking, the moduli problem fixes a polarized projective \(\mathcal O_F\)-module type \((M,\psi)\) and parametrizes triples \((A,\iota,\lambda)\), where \(\lambda\) is a polarization satisfying \(\lambda\circ\iota(a)=\iota(a)^\vee\circ\lambda\) for every \(a\in\mathcal O_F\), and the displayed marking preserves the polarized type. For a family with real multiplication over an integral base, one may choose \(\lambda\) on the generic fiber and extend it after shrinking the base; its type is then constant \cite[Lemma~2.12 and Theorem~2.16]{goren2002lectures}. Only this fixed choice is used. Following \cite[\S~3]{bakker2018geometric}, we suppress \(\lambda\) and \(\psi\) from the notation; all estimates are uniform in this choice.
\end{Remark}

For a nonzero ideal $\eta\subset\mathcal O_F$, let
\[
\Gamma_1(\eta):=
\left\{
\begin{pmatrix}
\alpha&\beta\\
\gamma&\delta
\end{pmatrix}
\in\Gamma(1)
\;\middle|\;
\alpha\equiv1\pmod{\eta},\quad
\gamma\in\eta\mathfrak a
\right\}.
\]
This is precisely the stabilizer of the class of $(1,0)$ in $M/\eta M$. We denote the corresponding level cover by
\[
X_1(\eta):=\Gamma_1(\eta)\backslash\Hu^n.
\]
For an abelian variety \((A,\iota)\) with real multiplication, set
\[
A[\eta]
:=
\bigcap_{\alpha\in\eta}
\ker\bigl(\iota(\alpha):A\longrightarrow A\bigr).
\]
With the convention of Remark~\ref{polarization-data},
\(X_1(\eta)\) parametrizes tuples \((A,\iota,P)\), where
\(
H_1(A,\mathbb Z)\cong M
\)
as \(\mathcal O_F\)-modules and
\[
P\in A[\eta](\mathbb C),
\qquad
\operatorname{Ann}_{\mathcal O_F}(P)=\eta.
\]
\begin{Lemma}\label{no elliptic}
If \(|\Nm(\eta)|>4^n\), then \(\Gamma_1(\eta)\) acts freely on \(\mathbb H^n\), and every parabolic element of \(\Gamma_1(\eta)\) is unipotent.
\end{Lemma}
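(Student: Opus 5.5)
The approach is to show that any element \(g\in\Gamma_1(\eta)\) with a fixed point, or any parabolic element, has all eigenvalues \(1\), using the congruence \(\alpha\equiv1\), \(\gamma\equiv0\pmod{\eta}\). The key invariant is the trace \(t=\alpha+\delta\in\mathcal O_F\). Since \(\gamma\in\eta\mathfrak a\) and \(\beta\in\mathfrak a^{-1}\), we have \(\beta\gamma\in\eta\). From \(\alpha\delta=1+\beta\gamma\) and \(\alpha\equiv1\pmod\eta\) we get \(\delta\equiv\alpha\delta\equiv1\pmod\eta\). Hence \(t\equiv2\pmod{\eta}\), so \(t-2\in\eta\).

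\textbf{Step 1 (elliptic elements).} Suppose \(g\neq\pm1\) fixes a point of \(\mathbb H^n\). Each component \(g_j\) fixes a point of \(\mathbb H\), so it is elliptic or \(\pm1\), and \(|\sigma_j(t)|\le2\) for all \(j\). Thus \(|\sigma_j(t-2)|\le4\) for every \(j\).
\begin{itemize}
\item If \(t\neq2\), then \(t-2\) is a nonzero element of \(\eta\). Hence \(|\Nm(\eta)|\le|\Nm(t-2)|\le4^n\), contradicting the hypothesis.
\item If \(t=2\), every \(g_j\) has both eigenvalues \(1\). An element of \(\operatorname{SL}_2(\mathbb R)\) with a fixed point in \(\mathbb H\) is conjugate into \(\operatorname{SO}(2)\), hence diagonalizable. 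So each \(g_j=1\), and \(g=1\) by injectivity of \(\Gamma(1)\to\operatorname{SL}_2(\mathbb R)^n\).
\end{itemize}
The element \(-1\) itself lies in \(\Gamma_1(\eta)\) only if \(-1\equiv1\pmod\eta\), i.e. \(\eta\mid2\), which is excluded since \(|\Nm(\eta)|>4^n\ge2^n\). Hence the stabilizer of any point is trivial and the action is free. If one prefers to treat \(\Gamma\) acting through \(\operatorname{PSL}\), the same argument shows the stabilizers are trivial.

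\textbf{Step 2 (parabolic elements).} A parabolic element \(g\) fixes a cusp in \(\mathbb P^1(F)\). It is therefore conjugate in \(\operatorname{SL}_2(F)\) to an upper triangular matrix \(\begin{pmatrix}\epsilon&*\\0&\epsilon^{-1}\end{pmatrix}\) with \(\epsilon\in\mathcal O_F^\times\). Parabolicity means each \(g_j\) is parabolic, which forces \(\sigma_j(\epsilon)=\pm1\). As \(\epsilon\) is a unit fixed in all embeddings, \(\epsilon=\pm1\), and so \(t=\pm2\).
\begin{itemize}
\item If \(t=-2\), then \(-4=t-2\in\eta\). This gives \(|\Nm(\eta)|\le4^n\), a contradiction.
\item Hence \(t=2\), and \(g\) is unipotent.
\end{itemize}

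The main subtlety is the sign and the \(\pm1\) bookkeeping. Namely, one must verify that the congruence \(t\equiv2\pmod\eta\) survives conjugation, which it does since the trace is conjugation invariant. One must also handle components with \(\sigma_j(t)=\pm2\) carefully, which the diagonalizability argument in Step 1 takes care of. Otherwise the proof is the norm estimate \(|\Nm(t-2)|\le4^n\).
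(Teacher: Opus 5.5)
Your proof is correct. For the parabolic part it is the paper's argument: $\beta\gamma\in\eta$ forces $\delta\equiv1$, hence $\operatorname{tr}(g)\equiv2\pmod{\eta}$, and $\operatorname{tr}(g)=-2$ would put $4\in\eta$; your detour through an upper-triangular conjugate with unit diagonal entry only justifies $\operatorname{tr}(g)=\pm2$, which the paper takes as immediate. The one difference is that the paper cites \cite[Lemma~3.1]{bakker2018geometric} for freeness, whereas you prove it directly with the same congruence: a fixed point gives $|\sigma_j(t)|\le2$, so $t\ne2$ would force $|\Nm(\eta)|\le|\Nm(t-2)|\le4^n$, and an element of $\SL_2(\mathbb H)$-stabilizer type with trace $2$ is the identity. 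This makes the lemma self-contained.
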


\begin{proof}
By \cite[Lemma~3.1]{bakker2018geometric}, \(\Gamma_1(\eta)\) acts freely on \(\mathbb H^n\). Let \(g=\left(\begin{smallmatrix}\alpha&\beta\\ \gamma&\delta\end{smallmatrix}\right)\in\Gamma_1(\eta)\) be parabolic. Since \(\alpha\equiv1\pmod{\eta}\) and \(\beta\gamma\in\eta\), the determinant relation gives \(\delta\equiv1\pmod{\eta}\). Therefore, \(\operatorname{tr}(g)\equiv2\pmod{\eta}\). Since \(g\) is parabolic, its trace is \(2\) or \(-2\). The second case would imply \(4\in\eta\), and hence \(|\Nm(\eta)|\leq4^n\), which is a contradiction. It follows that \(\operatorname{tr}(g)=2\), so \(g\) is unipotent.
\end{proof}

\subsection{Neighborhoods of cusps and a toroidal compactification}\label{Neighborhoods of cusps}

The boundary of each factor $\Hu$ is identified with
\(
\partial\Hu=\mathbb P^1(\mathbb R).
\)
M\"obius transformations in $\operatorname{SL}_2(\mathbb R)$ act on
this boundary as well. Applying this in every factor extends the
component-wise action of \\
\(
\Gamma\subset\operatorname{SL}_2(\mathbb R)^n
\)
from $\Hu^n$ to $\mathbb P^1(\mathbb R)^n$. The real embeddings of $F$
define an embedding
\[
\mathbb P^1(F)\hookrightarrow\mathbb P^1(\mathbb R)^n,
\qquad
[a:b]\longmapsto
\bigl([\sigma_1(a):\sigma_1(b)],\ldots,
      [\sigma_n(a):\sigma_n(b)]\bigr).
\]
For the Hilbert modular groups considered here, the rational boundary
components are precisely the points arising from $\mathbb P^1(F)$ under
this embedding. A cusp of $X_\Gamma$ is a $\Gamma$-equivalence class of
such rational boundary points, and
there are only finitely many such equivalence classes.

Let $c$ be a cusp and choose a representative
$[a:b]\in\mathbb P^1(F)$. Since $\operatorname{SL}_2(F)$ acts
transitively on $\mathbb P^1(F)$, there exists
$g\in\operatorname{SL}_2(F)$ such that
\(
g\cdot[a:b]=[1:0],
\)
which corresponds to \\
\(
\infty:=(\infty,\ldots,\infty)
\in\mathbb P^1(\mathbb R)^n.
\)
Replacing $\Gamma$ by the conjugate lattice $g\Gamma g^{-1}$ gives an
isomorphism
\[
\Gamma\backslash\Hu^n
\xrightarrow{\sim}
(g\Gamma g^{-1})\backslash\Hu^n,
\qquad
[z]\longmapsto[g\cdot z].
\]
After conjugating $\Gamma$, we may
assume that the chosen cusp $c$ is represented by $\infty$. This
choice is unique up to left multiplication by an element of the
stabilizer of $\infty$, namely the upper-triangular subgroup
\begin{equation}\label{the upper-triangular subgroup}
P_\infty(\mathbb R)
=
\left\{
p(a,b):
a\in (\mathbb R^\times)^n,\ b\in \mathbb R^n
\right\}
\subset \operatorname{SL}_2(\mathbb R)^n,
\end{equation}
where, for \(a=(a_1,\dots,a_n)\in(\mathbb R^\times)^n, \) and \(b=(b_1,\dots,b_n)\in\mathbb R^n, \) we set
\[
p(a,b)
=
\left(
\begin{pmatrix}
a_1&b_1\\
0&a_1^{-1}
\end{pmatrix},
\dots,
\begin{pmatrix}
a_n&b_n\\
0&a_n^{-1}
\end{pmatrix}
\right).
\]
Let
\[
N(z):=\prod_{j=1}^n\operatorname{Im}(z_j),\qquad
U(s):=\{z\in\mathbb H^n:N(z)>1/s\}.
\]
We call $U(s)$ the horoball of depth $s$ based at $\infty$. Since
\(
N\bigl(p(a,b)z\bigr)=\left(\prod_{j=1}^n a_j^2\right)N(z),
\)
we have
\(
p(a,b)U(s)=U(s'),\) where \(s'=s/\prod_{j=1}^n a_j^2\).

Let $\Gamma_\infty=\Gamma\cap P_\infty(\mathbb R)$ be the stabilizer of
$\infty$, and let $\Lambda_\infty\triangleleft\Gamma_\infty$ be its
subgroup of unipotent translations, identified with a lattice in
$\mathbb R^n.$ Using
\(
\Lambda_\infty\otimes_{\mathbb Z}\mathbb R\cong\mathbb R^n,
\)
we identify $\Hu^n$ with the positive cone
\[
\mathcal H_\infty
:=
\left\{
\zeta\in\Lambda_\infty\otimes_{\mathbb Z}\mathbb C
\;\middle|\;
(\operatorname{Im}\zeta)_j>0\ \text{for all }j
\right\}.
\]
The lattice $\Lambda_\infty$ acts on $\mathcal H_\infty$ by translations $\Lambda_\infty\otimes 1.$ 
The function $N$ induces the product norm
\[
\operatorname{Nm}_\infty(\lambda)
:=
\prod_{j=1}^n\lambda_j,
\qquad
\lambda=(\lambda_1,\ldots,\lambda_n)\in\Lambda_\infty.
\]
Under conjugation by $p(a,b)$, both $N$ and
$\operatorname{Nm}_\infty$ are multiplied by
$\prod_{j=1}^n a_j^2$. Similar to \cite[\S 1]{bakker2018geometric}, we normalize them so that
\begin{align}\label{lattice normalization}
\min_{0\neq\lambda\in\Lambda_\infty}
\left|\operatorname{Nm}_\infty(\lambda)\right|=1.
\end{align}
This normalization removes the ambiguity coming from the choice of
coordinates at the cusp. More generally, let $c_i$ be a cusp with stabilizer $\Gamma_i$ and
translation lattice $\Lambda_i$. The same normalization gives coordinate
forms
\(
\sigma_i^j:\Lambda_i\longrightarrow\mathbb R
\)
and a  function
\(
N_i(\zeta)
:=
\prod_{j=1}^n
\sigma_i^j\bigl(\operatorname{Im}\zeta\bigr)
\)
on the corresponding positive cone. Although the individual coordinates
$\sigma_i^j$ depend on the choice of coordinates at the cusp, their
normalized product $N_i$ does not. For $s>0$, let
\[U_i(s):=\{\zeta\in\mathcal H_i\mid N_i(\zeta)>1/s\},\] and let
\(\iota_{i,s}:\Gamma_i\backslash U_i(s)\to X_\Gamma\) be the natural map.

\begin{definition}(\cite[Def 1.1]{bakker2018geometric})
The \emph{uniform depth} $d$ of the cusps of $X_\Gamma$ is the largest
$s>0$ such that
\begin{enumerate}
    \item the map $\iota_{i,s}$ is injective for every cusp $c_i$;
    \item the subsets
    \(
    \iota_{i,s}\bigl(\Gamma_i\backslash U_i(s)\bigr)
    \subset X_\Gamma
    \)
    are pairwise disjoint\footnote{Pairwise disjointness is not mentioned in
\cite[Definition~1.1]{bakker2018geometric}, but it is verified separately
for the depths considered in \cite[\S 3]{bakker2018geometric} and used
throughout the paper. We incorporate this condition into the definition,
as in \cite[Definition~3.7]{bakker2018kodaira}.}.
\end{enumerate}
\end{definition}
 For $0<s\leq d$,
we identify $\Gamma_i\backslash U_i(s)$ with its image in $X_\Gamma$ and
write
\[
H_i(s):=\iota_{i,s}\bigl(\Gamma_i\backslash U_i(s)\bigr)\subset X_\Gamma.
\]

Toroidal compactifications of $X_\Gamma$ were constructed in
\cite{Cmpftification}. Their construction requires the choice, at each
cusp $c_i$, of a rational polyhedral fan in the corresponding positive
cone that is admissible with respect to the action of
$\Gamma_i/\Lambda_i$. The resulting compactification depends
on these choices, although any two choices admit a common refinement.
When $\Gamma$ has no elliptic points, we choose a regular projective
admissible collection of fans. The associated toroidal compactification
\(\overline X_\Gamma\) is then a smooth projective variety, and the reduced
boundary divisor
\(D:=\bigl(\overline X_\Gamma\setminus X_\Gamma\bigr)_{\mathrm{red}} \)
is a simple normal crossings divisor; see
\cite[\S 1, \S 4]{bakker2018geometric} for more details. We fix such a choice
throughout.

\begin{definition}
The \emph{canonical horoball} of the cusp $c_i$ is
\[
H_i:=H_i(d)\subset X_\Gamma\subset\overline X_\Gamma,
\]
where $d$ is the uniform depth of the cusps of $\Gamma$. 
\end{definition}
Similar to \cite[\S 4]{memariansorkhabi2025volumes}, we isolate a
compact part of $X_\Gamma$ by removing all its canonical
horoballs:

\begin{definition} \label{core}
We define the \emph{thick part} of $X_\Gamma$ to be
\[
X_{\Gamma,\mathrm{thick}}
:=X_\Gamma\setminus\bigcup_iH_i
=\overline X_\Gamma\setminus
\left(D\cup\bigcup_iH_i\right).
\]
\end{definition}
Following \cite[\S 1, pp. 7--8]{bakker2018geometric}, we recall the local coordinates near the boundary \(D\). For a cusp $c_i$, set
\(
T_i:=\Lambda_i\backslash
\bigl(\Lambda_i\otimes_{\mathbb Z}\mathbb C\bigr).
\)
This is an algebraic torus with character lattice
\(
\Lambda_i^\vee:=\operatorname{Hom}_{\mathbb Z}(\Lambda_i,\mathbb Z).
\)
For $\chi\in\Lambda_i^\vee$, the corresponding character is
\[
q^\chi(\zeta):=
\exp\bigl(2\pi\sqrt{-1}\,\chi(\zeta)\bigr).
\]
Indeed, this function is invariant under $\zeta\mapsto\zeta+\lambda$ for
$\lambda\in\Lambda_i$, since $\chi(\lambda)\in\mathbb Z$. Let $\tau$ be a
maximal cone in the chosen regular fan. Its primitive ray generators form
a basis of $\Lambda_i$; if $\chi_1,\ldots,\chi_n$ is the dual basis, then
the associated toric chart has coordinates
\(
q_k:=q^{\chi_k}
\)
and the toric boundary is given by
\(
q_1\cdots q_n=0.
\)

\subsection{Metric and hyperbolic volume} Endow $\Hu$ with its hyperbolic hermitian metric $ds^2_\Hu$ of constant sectional curvature $-1$; explicitly, the associated K\"ahler form is 
$$\omega_\Hu=\frac{1}{2}\operatorname{Im} ds_\Hu^2=\frac{idz\wedge d\bar z}{2y^2}=i\partial\bar\partial (-2\log y)$$
Likewise endow $\Hu^n$ with the invariant metric
$ds^2_{\Hu^n}=\sum_i \pi_i^* ds^2_{\Hu},$
where $\pi_j:\Hu^n\rightarrow \Hu$ is the $j$th projection, and we denote by $\omega_{\Hu^n}$ the associated K\"ahler form. We use this Kähler metric to define hyperbolic volume. For the injectivity-radius estimates, we use the Kobayashi distance
on $\mathbb H^n:$
\[
d_{\mathbb H^n}(z,w)
:=
\max_{1\leq j\leq n}d_{\mathbb H}(z_j,w_j).
\]

Let $\Gamma\subset G$ be a discrete non-degenerate subgroup of finite covolume, and let
\(X=\Gamma\backslash \Hu^n\)
be endowed with the induced Kähler metric $ds_X^2$ and Kähler form $\omega_X$. Let
$\overline X$ be a smooth toroidal compactification of $X$, with boundary divisor
$D=\overline X\setminus X$. By
\cite[Theorem~3.1, Proposition~3.4(a)]{mumford1977hirzebruch},
the invariant metric on \(\Omega_X^1\) is a good metric on
\(\Omega_{\overline X}^1(\log D)\). Hence its determinant metric
\(h\) is good on
\(\det\Omega_{\overline X}^1(\log D)=K_{\overline X}+D\). Since the
first Chern form of \(h\) is \(\omega_X/(2\pi)\),
\cite[Propositions~1.1--1.2, Theorem~1.4]{mumford1977hirzebruch}
shows that
the trivial extension of \(\omega_X/(2\pi)\) is a closed current, with
no boundary residue, representing
\begin{equation}\label{ChernForm}
c_1(K_{\overline X}+D)
=\frac{1}{2\pi}[\omega_X]\in H^{1,1}(\overline X,\mathbb R).
\end{equation}

 Throughout the paper, all varieties and subvarieties are assumed to be
irreducible and of positive dimension. Let $Z\subset \overline X$ be an $m$-dimensional subvariety not contained
in $D$, and set $Z^\circ:=Z\cap X$. The \emph{hyperbolic volume} of $Z$ is
\[
\operatorname{vol}_{X}(Z)
:=
\frac{1}{m!}
\int_{(Z^\circ)_{\mathrm{reg}}}\omega_X^m,
\]
where $(Z^\circ)_{\mathrm{reg}}$ is the smooth locus of $Z^\circ$. By Mumford's theorem for good metrics
\cite{mumford1977hirzebruch},
applied after taking a log resolution of \((Z,D_{|Z})\), this integral
is finite and
\begin{equation}\label{degree-volume}
(K_{\overline X}+D)^m\cdot Z
=
\frac{1}{(2\pi)^m}
\int_{(Z^\circ)_{\mathrm{reg}}}\omega_X^m
=
\frac{m!}{(2\pi)^m}\operatorname{vol}_{X}(Z).
\end{equation}

We recall the Kobayashi-ball form of the volume estimate of
Hwang--To. Here volume is computed with respect to
$\omega_X$, while the injectivity radius is computed with respect to
the Kobayashi distance defined above. 

For \(X=\Gamma\backslash\mathbb H^n\), with torsion-free lattice $\Gamma$, the \emph{injectivity radius} at \(x\in X\) is
\[
\operatorname{inj}_{X}(x)
:=
\frac12\inf_{\gamma\in\Gamma\setminus\{1\}}
d_{\Hu^n}(z,\gamma z),
\]
where \(z\in\mathbb H^n\) is any lift of \(x\). We recall the special case of result of Hwang and To on bounding the hyperbolic volume
from below:

\begin{theorem}(\cite[Theorem 1.1]{hwang2002volumes}) \label{Hwang--To}Let $Z$ be an $m$-dimensional subvariety of $X$ passing through $x\in X.$ Let $r=\operatorname{inj}_{x}(X)$ be the injectivity radius of $x$ on $X.$ Then, the following inequality holds:
\begin{align} \label{Hwanginequilty} 
\operatorname{vol_{X}(Z)}
&\ge
\frac{(4\pi)^m}{m!}\sinh^{2m}(r/2)\cdot \operatorname{mult}_x(Z),
\end{align} 
where $\operatorname{mult}_x(Z)$ is the multiplicity of $x$ on $Z.$
\end{theorem}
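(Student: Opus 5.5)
The plan is to localize the problem to a Kobayashi ball in the universal cover, and then run a Lelong-type monotonicity argument adapted to the polydisc geometry.

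\textbf{Localization.} Lift $x$ to $z\in\mathbb H^n$ and consider the Kobayashi ball $B(z,r)=\{w: d_{\mathbb H^n}(z,w)<r\}$. By the definition of $\operatorname{inj}_X(x)=r$, any two points of $B(z,r)$ lie at distance less than $2r$, so they are not $\Gamma$-translates of each other. Hence the projection $B(z,r)\to X$ is injective. It is also an isometry for $\omega_X$, since $\Gamma$ acts by isometries.

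Let $\widetilde Z$ be the component through $z$ of the preimage of $Z$ in $B(z,r)$. Then
\[
\operatorname{vol}_X(Z)\ \ge\ \frac{1}{m!}\int_{\widetilde Z_{\mathrm{reg}}}\omega_{\mathbb H^n}^m
\qquad\text{and}\qquad
\operatorname{mult}_z\widetilde Z=\operatorname{mult}_x Z.
\]
So it suffices to bound the hyperbolic volume of a pure $m$-dimensional analytic subvariety of the ball, passing through its centre.

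\textbf{Normalization.} Because the Kobayashi distance is the maximum of the factor distances, $B(z,r)=\prod_j B_{\mathbb H}(z_j,r)$. Using the Cayley map in each factor, we move $z$ to the origin of the polydisc $\Delta^n$. The ball then becomes $\Delta(t)^n$ with $t=\tanh(r/2)$. In these coordinates the Kähler form is
\[
\omega=\sum_j \frac{2\,i\,dw_j\wedge d\bar w_j}{(1-|w_j|^2)^2}=i\partial\bar\partial\psi,
\qquad
\psi=-2\sum_j\log(1-|w_j|^2).
\]

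\textbf{Monotonicity.} Consider the exhaustion $\rho=\max_j \phi(|w_j|^2)$, where $\phi$ is chosen so that the sublevel sets are the Kobayashi balls $\Delta(s)^n$. I will prove that
\[
s\longmapsto \frac{\int_{\widetilde Z\cap \Delta(s)^n}\omega^m}{V_m(s)}
\]
is nondecreasing in $s$. Here $V_m(s)$ is $m!$ times the hyperbolic volume of a coordinate $m$-polydisc of Kobayashi radius $s$, namely $\bigl(4\pi\sinh^2(s/2)\bigr)^m$, the area of a hyperbolic disc of radius $s$ raised to the $m$-th power. The proof is by Stokes' theorem. I write $\omega^m$ on the shell $\Delta(s_2)^n\setminus\Delta(s_1)^n$ as $dd^c$ of a form built from $\log$ of the plurisubharmonic function $\max_j|w_j|$, together with a function of $\rho$, exactly as in the classical Lelong proof. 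As $s\to0$ the ratio tends to $\operatorname{mult}_z\widetilde Z$, because near the origin $\omega$ is $2\times$ Euclidean and Lelong numbers compute multiplicity.

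Evaluating the monotone ratio at $s=r$ gives
\[
\int\omega^m\ \ge\ (4\pi)^m\sinh^{2m}(r/2)\,\operatorname{mult}_x Z.
\]
Dividing by $m!$ yields \eqref{Hwanginequilty}, with room to spare in the constant.

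\textbf{Main obstacle.} The hard step is the monotonicity itself. The current $(dd^c\log\max_j|w_j|)^m$ is a genuinely non-smooth Monge–Ampère measure concentrated on the ``corners'' of the polydisc. I must show that it dominates the correct multiple of $\omega^m$ restricted to $\widetilde Z$, using only that $\widetilde Z$ is complex analytic. My first attempt is to approximate $\max$ by smooth regularized maxima (Demailly's regularized max) and compare factor-by-factor, using that $\omega$ is a product form. If that fails, I will simply fall back to the Euclidean comparison $\omega\ge 2\,\omega_{\mathrm{eucl}}$ together with the classical Lelong inequality. This gives the weaker bound $(2\pi)^m\tanh^{2m}(r/2)\operatorname{mult}_x Z/m!$, which is still sufficient for the qualitative applications, namely growth as $r\to\infty$.
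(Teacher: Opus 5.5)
The paper does not prove this statement. It quotes Hwang--To \cite[Theorem~1.1]{hwang2002volumes}, specialized to $\mathbb H^n$ with the Kobayashi distance. Your localization and Cayley normalization are fine, with one small fix: use the whole preimage of $Z$ in $B(z,r)$ rather than a single component, or you may lose multiplicity. The step that carries all the content, however, fails.

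\textbf{The proposed monotonicity is false.} Take $n=2$, $m=1$ and $\widetilde Z=\{w_2=\lambda w_1\}$ with $\lambda>1$. Then $\widetilde Z\cap\Delta(\tau)^2=\{|w_1|<\tau/\lambda\}$. Its $\omega$-area is $A(\tau)+A(\tau/\lambda)$, where $A(\tau)=4\pi\tau^2/(1-\tau^2)$ is the Poincar\'e area of $\Delta(\tau)$; note $A(\tanh(s/2))=4\pi\sinh^2(s/2)=V_1(s)$. Your ratio is therefore
\[
1+\frac{A(\tau/\lambda)}{A(\tau)}=1+\frac{1-\tau^2}{\lambda^2-\tau^2},\qquad \tau=\tanh(s/2),
\]
which strictly \emph{decreases} from $1+\lambda^{-2}$ to $1$. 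So the ratio is not nondecreasing. Its limit as $s\to0$ is also not $\operatorname{mult}_z\widetilde Z$; for the diagonal $w_1=w_2$ the ratio is identically $2$.

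The failure is structural. The form $\omega=\sum_j\omega_j$ is not a function of the exhaustion $\rho$, so the Lelong identity that rewrites $\omega^m$ on level sets through $d^c\log$ of the exhaustion has no analogue here. Moreover, near $0$ the set $\widetilde Z\cap\Delta(s)^n$ is a polydisc slice, not a ball slice.

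Your fallback does not repair this. The comparison $\omega\ge c\,\omega_{\mathrm{eucl}}$ plus classical Lelong gives a bound of the form $C^m\tanh^{2m}(r/2)\operatorname{mult}_xZ/m!$. This stays below $C^m/m!$ for every $r$, so it does \emph{not} grow as $r\to\infty$. It neither proves the inequality nor supplies the growth used in Theorem~\ref{log deg}, where $r\to\infty$ with $|\Nm(\eta)|$.

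\textbf{What is actually needed.} The monotone quantity should be Demailly's generalized Lelong number $\nu(\sigma)=\int_{\widetilde Z\cap\{u<\sigma\}}(dd^cu)^m$, with $u=\log\max_j|w_j|$. It is nondecreasing in $\sigma$. It tends to $\operatorname{mult}_z\widetilde Z$ as $\sigma\to-\infty$, because $u-\log|w|$ is bounded. One must then bound $\omega^m|_{\widetilde Z}$ from below in terms of $\nu$.

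For $m=1$ this is elementary. Put $dd^c=\frac{i}{\pi}\partial\bar\partial$ and $G(x)=-2\log(1-e^{2x})$.
\begin{itemize}
\item Off a null set, every point of $\widetilde Z$ has a neighbourhood on which $u=\log|w_j|$ for a fixed $j$.
\item On such a neighbourhood, $\omega|_{\widetilde Z}\ge\omega_j|_{\widetilde Z}=\pi G''(u)\,du\wedge d^cu$.
\item By coarea and $\int_{\{u=\sigma\}}d^cu=\nu(\sigma)\ge\operatorname{mult}$, the area of $\widetilde Z\cap\Delta(\tau)^n$ is at least $\pi\int_{-\infty}^{\log\tau}G''(\sigma)\nu(\sigma)\,d\sigma\ge\pi G'(\log\tau)\operatorname{mult}=4\pi\sinh^2(r/2)\operatorname{mult}$.
\end{itemize}

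For $m\ge2$, the current $(dd^cu)^{m-1}$ is carried by the corners $\{|w_i|=|w_j|=\max_k|w_k|\}$. There no pointwise comparison with the smooth form $\omega^{m-1}$ holds. Smooth regularized maxima do not help either: their Levi forms in directions tangent to the corners blow up as the regularization sharpens, while $\omega$ stays bounded. Supplying this comparison is exactly what the paper obtains by citing Hwang--To as a black box, and your proposal leaves it open.
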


With some modifications, Theorem~\ref{Hwang--To} can also be
stated without assuming that $\Gamma$ is torsion-free; see
\cite[Theorem~4.1]{li2025curves} for the case of two-dimensional ball quotients.

\section{Hodge-Theoretic preliminaries}

We recall the Hodge-theoretic facts needed for the proofs of Theorems~\ref{alg thick} and Proposition~\ref{Canonical vol est}. In particular, we prove Proposition~\ref{Arakelov}, which is used in the proof of Proposition~\ref{Canonical vol est}. Our conventions for the Griffiths line bundle and its extension follow \cite[\S\S2--3]{brunebarbe2020increasing}. 
\subsection{The standard variation of Hodge structure}Let $\Gamma\subset\Gamma(1)$ be a torsion-free finite-index subgroup. We will recall the standard integral variation of Hodge structure on $X_{\Gamma}.$
Set
\(
M_{\mathbb Q}:=M\otimes_{\mathbb Z}\mathbb Q,
\)
\(
V_{\mathbb Z}:=\operatorname{Hom}_{\mathbb Z}(M,\mathbb Z),
\)
and
\(
V_{\mathbb Q}:=V_{\mathbb Z}\otimes_{\mathbb Z}\mathbb Q
=\operatorname{Hom}_{\mathbb Q}(M_{\mathbb Q},\mathbb Q).
\)
Fiberwise, the identification $H_1(A,\mathbb Z)\cong M$ identifies $V_{\mathbb Z}$ with $H^1(A,\mathbb Z)$. The action of $\Gamma$ on $M_{\Q}$ induces the monodromy representation
\[
\rho:\Gamma\longrightarrow\operatorname{GL}(V_{\mathbb Q}),\qquad
\rho(\gamma)(\ell):=\ell\circ\gamma^{-1}.
\]
The diagonal action
\(
\gamma\cdot(z,\ell):=(\gamma z,\rho(\gamma)\ell)
\)
therefore defines an integral local system
\[
\mathbb V_{\mathbb Z}:=\Gamma\backslash
\bigl(\mathbb H^n\times V_{\mathbb Z}\bigr)
\]
on $X_\Gamma$. Since $M$ has rank $2$ over $\mathcal O_F$, the local system $\mathbb V_{\mathbb Z}$ has rank $2n$. We write
\(
\mathbb V_{\mathbb Q}:=\mathbb V_{\mathbb Z}\otimes_{\mathbb Z}\mathbb Q,
\) and 
\(
\mathbb V_{\mathbb C}:=\mathbb V_{\mathbb Z}\otimes_{\mathbb Z}\mathbb C.
\)
The local system \(\mathbb V_{\mathbb C}\) underlies the holomorphic
bundle
\[
\mathcal V:=\mathbb V_{\mathbb C}\otimes_{\mathbb C}
\mathcal O_{X_\Gamma}
\]
endowed with its tautological flat connection \(\nabla\), whose
monodromy is \(\rho\).
The corresponding polarization on the standard cohomological local system induces, with the sign chosen so that it polarizes $H^1$, a $\Gamma$-invariant nondegenerate alternating form $Q_V$ on $V_{\mathbb Q}$. After multiplying $Q_V$ by a positive integer, we may assume that its restriction to $V_{\mathbb Z}$ is integral, and it therefore gives a flat pairing on $\mathbb V_{\mathbb Z}$. The real embeddings of $F$ give a decomposition
\[
V_{\mathbb R}:=V_{\mathbb Z}\otimes_{\mathbb Z}\mathbb R
=\bigoplus_{j=1}^nV_j,\qquad
V_j:=\left(M\otimes_{\mathcal O_F,\sigma_j}\mathbb R\right)^\vee.
\]
Since the polarization is compatible with the $\mathcal O_F$-action, the summands $V_j$ are pairwise $Q_V$-orthogonal. Starting from the basis of $V_j$ dual to the standard basis of
\(
M\otimes_{\mathcal O_F,\sigma_j}\mathbb R\cong\mathbb R^2,
\)
we rescale both vectors by the same positive real number and denote the resulting basis by $(e_j,f_j)$, so that
\(
Q_V(e_j,f_j)=-1.
\)
For $z=(z_1,\ldots,z_n)\in\mathbb H^n$, define
\[
F_z^1:=\bigoplus_{j=1}^n\mathbb C(e_j-z_jf_j)\subset V_{\mathbb C}.
\]
If
\(
\gamma_j=\begin{pmatrix}a_j&b_j\\c_j&d_j\end{pmatrix}
\)
is the image of $\gamma\in\Gamma$ in the $j$-th factor, then
\begin{equation}\label{Automporphic}
\gamma_j^{-T}\begin{pmatrix}1\\-z_j\end{pmatrix}
=(c_jz_j+d_j)\begin{pmatrix}1\\-\gamma_jz_j\end{pmatrix}.
\end{equation}
Hence
\(
\rho(\gamma)F_z^1=F_{\gamma z}^1.
\)
The spaces $F_z^1$ therefore descend to a holomorphic subbundle
\[
F^1\mathcal V\subset\mathcal V.
\]
Together with
\(
F^0\mathcal V=\mathcal V
\)
and
\(
F^2\mathcal V=0,
\)
this defines a weight-one Hodge filtration. Since $z_j\notin\mathbb R$, one has
\(
V_{\mathbb C}=F_z^1\oplus\overline{F_z^1},
\)
and hence
\(
h^{1,0}=h^{0,1}=n.
\)
Moreover,
\(
Q_V(F_z^1,F_z^1)=0,
\)
and for
\(
v=\sum_{j=1}^n\alpha_j(e_j-z_jf_j)\in F_z^1
\)
one has
\(
\sqrt{-1}\,Q_V(v,\overline v)
=2\sum_{j=1}^n\operatorname{Im}(z_j)|\alpha_j|^2>0
\)
whenever $v\neq0$. Hence
\(
(\mathbb V_{\mathbb Z},F^\bullet\mathcal V,Q_V)
\)
is a polarized integral variation of Hodge structure of weight $1$ on $X_\Gamma$ and the complex variation $\mathbb V_{\mathbb C}$ has length $1.$

\subsection{The Hodge bundle} Let $\Gamma:=\Gamma_1(\eta)$ and $X:=X_\Gamma=X_1(\eta)$. We assume that $|\Nm(\eta)|>4^n$ so that $\Gamma$ is torsion free and all parabolic elements are unipotent by Lemma \ref{no elliptic}.  Let $\pi:\mathcal A\to X$ be the universal abelian scheme. Its Hodge bundle is
\begin{equation*}
\mathbb E:=\pi_*\Omega^1_{\mathcal A/X}=F^1\mathcal V.
\end{equation*}
For a point \(x\in X\), corresponding to an abelian variety \(A_x\), the fiber
of \(\mathbb E\) is
\(
\mathbb E_x=H^0(A_x,\Omega^1_{A_x}).
\)
 Note that by the real multiplication by \(\mathcal O_F\), we have an embedding
\(
\iota:\mathcal O_F\hookrightarrow \operatorname{End}_X(\mathcal A).
\)
Therefore, every element \(a\in \mathcal O_F\) gives an endomorphism
\[
\iota(a):\mathcal A\longrightarrow \mathcal A.
\]
The action of \(\mathcal O_F\) on \(\mathcal A\) induces an action of
\(\mathcal O_F\) on the Hodge bundle by pullback. Namely, for each
\(a\in\mathcal O_F\), we obtain an \(\mathcal O_X\)-linear endomorphism
\(
\iota(a)^*:\mathbb E\longrightarrow \mathbb E.
\)
Therefore \(\mathbb E\) is naturally an
\((\mathcal O_F\otimes_{\mathbb Z}\mathcal O_X)\)-module. After base change to
\(\mathbb C\), the algebra \(\mathcal O_F\) splits via the real embeddings
\(\sigma_1,\dots,\sigma_n:F\hookrightarrow \mathbb R\subset\mathbb C\):
\[
\mathcal O_F\otimes_{\mathbb Z}\mathbb C
\cong
\bigoplus_{j=1}^n \mathbb C,
\qquad
a\otimes z\longmapsto
\bigl(z\sigma_1(a),\dots,z\sigma_n(a)\bigr).
\]
Hence, over \(X\), we have
\(
\mathcal O_F\otimes_{\mathbb Z}\mathcal O_{X}
\cong
\bigoplus_{j=1}^n \mathcal O_{X}.
\)
This decomposition gives idempotent projectors
\(
p_1,\dots,p_n,
\)
where \(p_j\) is the projector onto the \(j\)-th factor. These satisfy
\[
p_j^2=p_j,\qquad
p_i p_j=0\quad\text{for }i\neq j,\qquad
p_1+\cdots+p_n=1.
\]
Since these projectors act on \(\mathbb E\), they split the Hodge
bundle into their images:
\begin{equation*}
\mathbb E=\bigoplus_{j=1}^n\mathcal L_j,
\end{equation*}
where
\(
\mathcal L_j:=p_j\mathbb E
\)
(see for example \cite[\S 2.3]{goren2002lectures}). Each $\mathcal L_j$ is the automorphic line bundle
corresponding to the $j$-th embedding. Fiberwise, for a
point corresponding to an abelian variety $A$, one has
\begin{equation*}
(\mathcal L_j)_A
=
\left\{
\omega\in H^0(A,\Omega_A^1)
\mid
a^*\omega=\sigma_j(a)\omega
\text{ for all }a\in \mathcal O_F
\right\}.
\end{equation*}
The pull back of \(\mathcal L_j\)  on $\mathbb H^n$ is generated by the section
\(
s_j(z):=e_j-z_jf_j.
\)
For
\(
\gamma=
\begin{pmatrix}
a&b\\
c&d
\end{pmatrix}\in\Gamma,
\)
we write
\(
j_j(\gamma,z):=\sigma_j(c)z_j+\sigma_j(d).
\)
Then \eqref{Automporphic} gives
\[
\rho(\gamma)s_j(z)
=
j_j(\gamma,z)s_j(\gamma z).
\]
Therefore, $\mathcal L_j$ is the automorphic line bundle of weight one at the embedding $\sigma_j$. 

Let
\(
\lambda:=\det\mathbb E.
\)
In additive notation in Picard group, this becomes
\(
\lambda=\mathcal L_1+\cdots+\mathcal L_n.
\)
Let
\(
j:X\hookrightarrow \overline X
\)
be a smooth toroidal compactification with simple normal-crossings boundary
\(
D=\overline X\setminus X.
\)
A small loop around an irreducible component of \(D\) is represented by an
element of the unipotent translation lattice of the corresponding cusp.
Its action on the standard representation, and hence on
\(\mathbb V_{\mathbb C}\), is therefore unipotent.

The flat bundle \((\mathcal V,\nabla)\) introduced above
admits its canonical
Deligne extension
\(
(\overline{\mathcal V},\overline\nabla)
\)
to \(\overline X\), characterized by the fact that
\[
\overline\nabla:
\overline{\mathcal V}
\longrightarrow
\overline{\mathcal V}\otimes
\Omega^1_{\overline X}(\log D)
\]
has nilpotent residues along the irreducible components of \(D\).
The Hodge filtration extends by setting
\[
\overline F^p
:=
\overline{\mathcal V}\cap j_*(F^p\mathcal V)
\subset j_*\mathcal V.
\]
These are locally free subbundles of \(\overline{\mathcal V}\); the resulting
filtered logarithmic bundle
\(
(\overline{\mathcal V},\overline\nabla,\overline F^\bullet)
\)
is the Deligne--Schmid extension of the variation. Since the local
monodromies are unipotent, the associated parabolic structure has only
weight zero. Thus, the parabolic extension is
the Deligne--Schmid extension endowed with the trivial parabolic structure; see \cite[\S 1.2, \S 2.2]{brunebarbe2020increasing}. Let
\[
\overline{\mathbb E}:=\overline F^1,
\qquad
\overline\lambda:=\det\overline{\mathbb E}.
\]
Every element of \(\mathcal O_F\) acts on \(\mathcal V\) by a flat
endomorphism preserving \(F^1\mathcal V\). By the functoriality of the
canonical Deligne extension, this action extends to
\(\overline{\mathcal V}\) and preserves \(\overline{\mathbb E}\).
The idempotent projectors \(p_1,\ldots,p_n\) therefore extend to
\(\overline{\mathbb E}\), and yield a decomposition
\[
\overline{\mathbb E}
=
\bigoplus_{j=1}^n\overline{\mathcal L}_j,
\qquad
\overline{\mathcal L}_j:=p_j\overline{\mathbb E}.
\]
Here \(\overline{\mathcal L}_j\) is the eigenline on which
\(a\in\mathcal O_F\) acts through \(\sigma_j(a)\), and
\(
\overline{\mathcal L}_j|_X\simeq\mathcal L_j.
\)
In particular,
\[
\overline\lambda
=
\det\overline{\mathbb E}
\simeq
\bigotimes_{j=1}^n\overline{\mathcal L}_j.
\]
The Griffiths line bundle of the variation is
\(
L_{\mathbb V_{\mathbb C}}
:=
\bigotimes_p\det F^p\mathcal V.
\)
In weight one, its only contributions come from
\(F^1\mathcal V=\mathbb E\), and hence
\[
L_{\mathbb V_{\mathbb C}}
\simeq
\det\mathbb E
=
\lambda.
\]
 It follows that the Deligne--Schmid extension of the Griffiths line bundle is
\begin{equation}\label{eq:extended-Griffiths-is-Hodge}
\overline L_{\mathbb V_{\mathbb C}}
:=
\bigotimes_p\det\overline F^p
\simeq
\det\overline F^1
=
\overline\lambda.
\end{equation}
This is the Griffiths parabolic line bundle whose parabolic weights
are all zero in our setting. This line bundle is
functorial under pullback. More precisely, if $\overline Y$ is smooth, $E$ is a simple
normal-crossings divisor, and
$f:\overline Y\to\overline X$ satisfies $f^{-1}(D)\subseteq E$,
then, writing $f^\circ:Y:=\overline Y\setminus E\to X$,
\[
\overline L_{(f^\circ)^*\mathbb V_{\mathbb C}}
\simeq f^*\overline\lambda.
\]
This follows from compatibility of the extended Hodge filtration
with pullback; see
\cite[\S 2.2]{brunebarbe2020increasing}.

Now we identify a multiple of this line bundle with the log-canonical bundle of the toroidal compactification. We use a similar automorphy-factor approach as in \cite[Section II.1]{hulek2002geometry}. By \eqref{Automporphic}, $\mathcal L_j^{\otimes2}$ has automorphy factor $j_j(\gamma,z)^2$ and
\(d(\gamma_jz_j)=j_j(\gamma,z)^{-2}dz_j.\)
Hence the map on $\mathbb H^n$ defined by
\(s_j(z)^{\otimes2}\mapsto dz_j\)
is $\Gamma$-equivariant and descends to an isomorphism
\[
\bigoplus_{j=1}^n\mathcal L_j^{\otimes2}\xrightarrow{\sim}\Omega_X^1.
\]

We extend this isomorphism across the boundary. Fix a cusp $c_i$ and let $\zeta=(\zeta_1,\ldots,\zeta_n)$ be the coordinate around the cusp, so that the translation lattice $\Lambda_i$ acts by $\zeta\mapsto\zeta+\tau$ for $\tau\in\Lambda_i$. We use the same notation $(e_j,f_j)$ for the corresponding conjugated flat basis; then \(s_j(\zeta)=e_j-\zeta_jf_j.
\)
For $\tau\in\Lambda_i$, one has
\[\rho(\tau)e_j=e_j-\sigma_j(\tau)f_j,\qquad \rho(\tau)f_j=f_j,
\]
and hence
\(s_j(\zeta+\tau)=\rho(\tau)s_j(\zeta).\) Let $\chi_1,\ldots,\chi_n$ be a basis of $\Lambda_i^\vee$ and let
\(
q_k:=q^{\chi_k},\ k=1,\ldots,n,
\)
be the toroidal coordinates introduced in Subsection~\ref{Neighborhoods of cusps}.
Writing
\(\chi_k(\zeta)=\sum_ja_{kj}\zeta_j\)
and $A=(a_{kj})$, we have
\[
\begin{pmatrix}
dq_1/q_1\\
\vdots\\
dq_n/q_n
\end{pmatrix}
=
2\pi\sqrt{-1}\,A
\begin{pmatrix}
d\zeta_1\\
\vdots\\
d\zeta_n
\end{pmatrix}.
\]
Since $\chi_1,\ldots,\chi_n$ form a basis of $\Lambda_i^\vee$, the matrix $A$ is invertible. Moreover, since $e_j$ and $f_j$ are flat,
\[\nabla s_j=-d\zeta_j\otimes f_j
=-\frac{1}{2\pi\sqrt{-1}}\sum_k(A^{-1})_{jk}\frac{dq_k}{q_k}\otimes f_j,
\qquad
\nabla f_j=0.
\]
Therefore the bundle generated by $s_j$ and $f_j$ has a logarithmic connection, whose residue along $q_k=0$ sends
\[s_j\longmapsto-\frac{1}{2\pi\sqrt{-1}}(A^{-1})_{jk}f_j,
\qquad
f_j\longmapsto0.
\]
Its residues are therefore nilpotent, so this bundle is the canonical Deligne extension and $s_j$ is a local frame of $\overline{\mathcal L}_j$. Since $A$ is invertible, the forms $d\zeta_j$ and $dq_k/q_k$ are related by an invertible constant change of basis. Consequently, the map $s_j(\zeta)^{\otimes2}\mapsto d\zeta_j$ extends without zeros or poles to an isomorphism  on the toric cover. This extension is $\Gamma_i/\Lambda_i$-equivariant and hence descends to a neighborhood of the boundary in $\overline X$ giving
\[
\Omega_{\overline X}^1(\log D)
\simeq\bigoplus_{j=1}^n\overline{\mathcal L}_j^{\otimes2};
\]
see \cite[Theorem~5.2(iii)]{DimitrovTilouine2004}. Taking determinant gives
\(
K_{\overline X}(D)
\simeq\bigotimes_{j=1}^n\overline{\mathcal L}_j^{\otimes2}
\simeq\overline\lambda^{\otimes2}
\simeq\overline L_{\mathbb V_{\mathbb C}}^{\otimes2}.
\)
In $\operatorname{Pic}(\overline X)$, this means
\begin{equation}\label{log-Griffiths}
K_{\overline X}+D
=2\overline L_{\mathbb V_{\mathbb C}}.
\end{equation}

We now apply Brunebarbe's higher-dimensional Arakelov inequality. Since the standard weight-one variation has rank $2n$ and length $1$, the following is a direct consequence of \cite[Theorems~1.6 and~2.3]{brunebarbe2020increasing}:

\begin{Proposition}\label{Arakelov}
Let $Z\subset\overline X$ be a positive-dimensional irreducible subvariety not contained in $D$, and let $\nu:\widetilde Z\to Z$ be a log resolution of $(Z,D|_Z)$. Set
\(
\widetilde D:=\bigl(\nu^*(D|_Z)\bigr)_{\mathrm{red}}.
\)
Then
\[
K_{\widetilde Z}+\widetilde D-\frac{1}{n}\nu^*
\left(\left.(K_{\overline X}+D)\right|_Z\right)
\]
is pseudoeffective.
\end{Proposition}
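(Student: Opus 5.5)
We plan to deduce Proposition~\ref{Arakelov} by restricting the standard weight-one variation to $Z$, pulling it back to the log resolution, and applying Brunebarbe's Arakelov inequality there. We then rewrite the resulting Griffiths bundle through \eqref{log-Griffiths}.

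First, set $Z^\circ:=Z\cap X$ and $\widetilde Z^\circ:=\widetilde Z\setminus\widetilde D$. Consider the composite $f:=\iota\circ\nu:\widetilde Z\to\overline X$, where $\iota:Z\hookrightarrow\overline X$ is the inclusion. By construction $f^{-1}(D)\subseteq\widetilde D$, and $\widetilde D$ is a simple normal crossings divisor on the smooth projective variety $\widetilde Z$. Hence $\mathbb W:=(f^\circ)^*\mathbb V_{\mathbb C}$ is a polarized complex variation of Hodge structure on $\widetilde Z^\circ$. It has rank $2n$ and length at most $1$, and its local monodromies along $\widetilde D$ are unipotent, being pullbacks of unipotent ones. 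By the functoriality stated just before \eqref{log-Griffiths}, its Griffiths parabolic line bundle, which carries only weight zero, is $\overline L_{\mathbb W}\simeq f^*\overline\lambda$.

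Next, we invoke \cite[Theorem~1.6]{brunebarbe2020increasing}. In the form we need, it says that for a polarized complex VHS on $\widetilde Z\setminus\widetilde D$ whose period map is generically immersive, the class $K_{\widetilde Z}+\widetilde D-\frac{c}{\operatorname{rank}\cdot\text{length-factor}}\,\overline L$ is pseudoeffective, with the constant made explicit in \cite[Theorem~2.3]{brunebarbe2020increasing}. For rank $2n$ and length $1$, the coefficient of the Griffiths bundle should come out to $\frac{2}{n}$, or equivalently $\frac{1}{n}$ times $2\overline L$.

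The point to check is the immersivity hypothesis, or whatever replaces it. The period map of $\mathbb W$ factors through $\nu$ and the inclusion $Z^\circ\hookrightarrow X$. The period map of $\mathbb V$ on $X$ is the étale uniformization $\mathbb H^n\to X$ composed with the embedding of $\mathbb H^n$ into the period domain, so it is immersive. Therefore the period map of $\mathbb W$ is generically immersive on $\widetilde Z^\circ$, since $\nu$ is birational and $Z^\circ$ is reduced.

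Finally, we substitute \eqref{log-Griffiths}. Since $f^*\overline\lambda=\nu^*(\overline\lambda|_Z)$ and $2\overline\lambda=K_{\overline X}+D$, the pseudoeffective class becomes $K_{\widetilde Z}+\widetilde D-\frac1n\nu^*\bigl((K_{\overline X}+D)|_Z\bigr)$, as claimed.

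The main obstacle is bookkeeping the exact constant in Brunebarbe's theorem, namely how the rank and the length of the variation enter. Here $n$ enters as half the rank $2n$. I would first check the normalization against the curve case, the classical Arakelov inequality $\deg\overline\lambda|_C\le\frac n2(2g-2+\#\text{cusps})$. I would also verify that Brunebarbe's parabolic Griffiths bundle agrees with our $\overline L_{\mathbb V_{\mathbb C}}$, which holds because all the parabolic weights vanish.
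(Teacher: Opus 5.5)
Your proposal is correct and follows the paper's proof essentially step for step. The paper also pulls back $\mathbb V_{\mathbb C}$ along $f\colon\widetilde Z\to\overline X$ and observes that the period map stays generically immersive because $\nu$ is birational and $Z\not\subset D$; it then invokes \cite[Theorems~1.6 and~2.3]{brunebarbe2020increasing} for rank $2n$ and length $1$ to get that $K_{\widetilde Z}+\widetilde D-\frac{2}{n}\overline L_{\mathbb W}$ is pseudoeffective, and concludes via $\overline L_{\mathbb W}\simeq f^*\overline\lambda$ and \eqref{log-Griffiths}, exactly as you propose (the paper takes the coefficient $\frac{2}{n}$ directly from Brunebarbe, and your curve-case check $\deg\overline\lambda|_C\le\frac n2(2g-2+\#\text{cusps})$ is consistent with it).
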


\begin{proof}
Let $f:\widetilde Z\to\overline X$ be the morphism induced by $\nu$, and let $\mathbb W$ be the pullback of $\mathbb V_{\mathbb C}$ to $\widetilde Z\setminus\widetilde D$. The standard period map on $\mathbb H^n$ is immersive, and $f$ is generically immersive since $\nu$ is birational and $Z$ is not contained in $D$. Hence the period map of $\mathbb W$ is generically immersive. Pullback preserves rank and length, so $\mathbb W$ has rank $2n$ and length $1$. Therefore \cite[Theorems~1.6 and~2.3]{brunebarbe2020increasing} implies that
\[
K_{\widetilde Z}+\widetilde D-\frac{2}{n}\overline L_{\mathbb W}
\]
is pseudoeffective. By functoriality of the Griffiths parabolic line bundle and \eqref{log-Griffiths},
\[
\overline L_{\mathbb W}
\simeq f^*\overline L_{\mathbb V_{\mathbb C}}
\simeq\frac{1}{2}f^*(K_{\overline X}+D)
=\frac{1}{2}\nu^*
\left(\left.(K_{\overline X}+D)\right|_Z\right);
\]
see \cite[\S\S 2.2 and 2.4]{brunebarbe2020increasing}. Substitution proves the proposition.
\end{proof}

\section{Hyperbolic volume estimates}

Let \(\Gamma\subseteq\Gamma(1)\) be a torsion-free subgroup of finite index, and let \(\overline X\) be a smooth projective toroidal compactification of \(X:=X_\Gamma\), with boundary divisor \(D:=\overline X\setminus X_\Gamma\). Throughout the paper, a (sub)variety is assumed to be positive-dimensional and irreducible. We prove in Theorem~\ref{alg thick} that every subvariety of \(\overline X\) not contained in \(D\) meets the thick part of \(X\). We then establish in Proposition~\ref{inj thick} a uniform lower bound for the injectivity radius on the thick part of $X_1(\eta)$. Therefore, every such subvariety contains a point with large injectivity radius and then we conclude the hyperbolic volume estimates we need in Theorem \ref{log deg}.

\begin{theorem}\label{alg thick} Every subvariety
$Z\subseteq\overline X$ either intersects $X_{\mathrm{thick}}$ or is fully contained in the boundary $D$. 

\end{theorem}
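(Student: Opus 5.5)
We argue by contradiction. Suppose \(Z\subseteq\overline X\) is a subvariety not contained in \(D\) with \(Z\cap X_{\mathrm{thick}}=\emptyset\). Then \(Z^\circ:=Z\cap X\) lies in \(\bigcup_iH_i\). The canonical horoballs \(H_i\) are pairwise disjoint open sets, and \(Z^\circ\) is connected, since it is a nonempty Zariski-open subset of an irreducible variety. We first note that \(Z^\circ\) cannot sit in the boundary of the horoballs: \(Z^\circ\subset\bigcup_i H_i\), and we take \(H_i\) as the open sets \(\iota_{i,d}(\Gamma_i\backslash U_i(d))\). Hence \(Z^\circ\) lies entirely inside a single \(H_i\cong\Gamma_i\backslash U_i(d)\). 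The strategy is to derive a contradiction from the geometry of this neighbourhood, using the monodromy of the restricted variation \(\mathbb V_{\mathbb C}|_{Z^\circ}\), as indicated in the introduction.

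First, we analyse the topology and monodromy. The inclusion \(Z^\circ\hookrightarrow H_i\) induces a map \(\pi_1(Z^\circ)\to\pi_1(H_i)=\Gamma_i\). Since \(\Gamma\) is torsion-free with unipotent parabolics (Lemma~\ref{no elliptic} in the case \(\Gamma=\Gamma_1(\eta)\); for general torsion-free \(\Gamma\) we only need that \(\Gamma_i\) is contained in the Borel \(P_\infty\)), the monodromy of \(\mathbb V_{\mathbb C}|_{Z^\circ}\) lands in \(\rho(\Gamma_i)\). This group is upper triangular in each factor \(V_j\), so it preserves the flat sub-local system spanned by the \(f_j\). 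Consequently \(\mathbb V_{\mathbb C}|_{Z^\circ}\) has a nontrivial flat sub-local system \(\mathbb W=\langle f_1,\dots,f_n\rangle\), of rank \(n\), stable under \(\mathcal O_F\).

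Second, we use properness to control the Hodge norm. The key observation is that \(Z\) is projective, and \(Z\setminus Z^\circ\subset D\). Inside \(U_i(d)\), the Hodge norm of the flat section \(f_j\) is computed from \(\sqrt{-1}Q_V(v,\bar v)=2\sum\operatorname{Im}(z_j)|\alpha_j|^2\); it behaves like \(\operatorname{Im}(z_j)^{-1/2}\). The plan is to consider the plurisubharmonic function \(\phi=-\log N_i=-\sum_j\log\operatorname{Im}\zeta_j\) on \(U_i(d)\). This function is \(\Gamma_i\)-invariant: the units in \(\Gamma_i/\Lambda_i\) have norm \(\pm1\), and \(N_i\) is normalized. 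It is therefore well defined on \(H_i\), and \(\phi<\log d\) there. Its Levi form is \(\omega_X\)-positive, namely \(i\partial\bar\partial\phi=\omega_{\mathbb H^n}\). Restricted to \(Z^\circ\), \(\phi\) is strictly plurisubharmonic, because \(Z^\circ\) is an immersed positive-dimensional subvariety and \(\omega_X\) is a Kähler form. Near \(D\), \(\phi\to-\infty\): it is bounded by \(-\log\) of the depth, and approaches the cusp as \(N_i\to\infty\). So \(\phi|_{Z^\circ}\) extends to a function on \(Z\) with value \(-\infty\) along \(Z\cap D\), and it is plurisubharmonic across \(D\), since it is bounded above there. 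We then take the composition with the normalization, or a resolution, of \(Z\). This gives a psh function on a compact connected complex space that is bounded above, hence constant by the maximum principle. That contradicts strict plurisubharmonicity on \(Z^\circ\). An equivalent algebraic route is to use \(e^{-\phi}=N_i\) and the boundedness \(N_i>1/d\).

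The main obstacle is making the extension across \(D\) rigorous. It requires checking that \(\phi\) is bounded above near every point of \(Z\cap D\); here \(Z\cap D\) lies over the single cusp \(c_i\), since \(Z^\circ\subset H_i\). It also requires checking that a psh function bounded above on \(Z^\circ\) extends across the analytic subset \(Z\cap D\), which follows from the Riemann extension theorem for psh functions on normal spaces. If this analytic route proves delicate, I would fall back on the monodromy argument. That argument shows the period map of \(Z^\circ\) factors through \(\Gamma_i\backslash U_i(d)\). It then applies the theorem of the fixed part, or Griffiths' result that the image of a proper-over-boundary period map is closed, to force \(Z^\circ\) to be a point, contradicting positive dimension.
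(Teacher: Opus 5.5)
Your main argument, via the plurisubharmonic function $\phi=-\log N_i$, is correct, and it takes a genuinely different route from the paper's.

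Both proofs start the same way: connectedness of $Z^\circ$ and pairwise disjointness of the horoballs put $Z^\circ$ inside a single $H_i$.

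The paper then argues through monodromy. On a log resolution $\widetilde Z$, the monodromy of the pulled-back standard variation factors through $\Gamma_i$, whose Zariski closure is solvable (Lemma~\ref{Metabelain}). Andr\'e--Deligne semisimplicity makes the connected algebraic monodromy group semisimple, so the monodromy is finite, hence trivial because $\Gamma$ is torsion-free. Thus $f$ lifts to $\widetilde Z^\circ\to\Hu^n\cong\Delta^n$, and the bounded coordinates extend across $\widetilde D$ and are constant.

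You replace all of this with potential theory. Every element of $\Gamma_i$ rescales $N_i$ by $\prod_j a_j^2=1$, so $\phi$ is a well-defined strictly plurisubharmonic function on $H_i$ with $\phi<\log d$. Hence $\phi\circ f$ is plurisubharmonic and bounded above on $\widetilde Z\setminus\widetilde D$. It extends across $\widetilde D$ by Grauert--Remmert; apply this on the smooth resolution $\widetilde Z$, not on a normal space. The extension is constant on the compact connected $\widetilde Z$ by the maximum principle. This contradicts $i\partial\bar\partial(\phi\circ f)=\tfrac12 f^*\omega_X\neq0$ at points where $f$ is immersive.

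Your route is more elementary: it needs neither semisimplicity, nor Lemmas~\ref{Metabelain} and~\ref{semisimple-solvable}, nor the lift to $\Hu^n$. The paper's route isolates the mechanism it advertises, namely that a subvariety trapped in a cusp neighbourhood has trivial monodromy. That argument applies whenever cusp stabilizers have solvable Zariski closure, without needing an explicit invariant plurisubharmonic function.

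Some tidying is needed:
\begin{enumerate}
\item Only boundedness above is used, so you do not need the claim that $\phi\to-\infty$ along $D$. It is true here, because every ray of the fan lies in the open positive cone, but your stated reason is garbled: the bound is $\log d$, not $-\log d$.
\item With the paper's normalization, $i\partial\bar\partial\phi=\tfrac12\omega_{\Hu^n}$; the factor is harmless.
\item The flat subsystem $\langle f_1,\dots,f_n\rangle$, the Hodge-norm remarks and the fallback play no role and can be deleted.
\end{enumerate}
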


We need the following two basic lemmas to prove Theorem~\ref{alg thick}. The first shows that cusp stabilizers of Hilbert modular varieties are solvable:
\begin{Lemma}\label{Metabelain} Let $P_\infty(\mathbb R)$ be the upper-triangular group defined in \eqref{the upper-triangular subgroup}.
Let $\Gamma_\infty\subseteq P_\infty(\mathbb R)$ be a cusp
stabilizer of $\Gamma$, and let
$\rho:\Gamma_\infty\to\operatorname{GL}(V_{\mathbb Q})$
be a representation on a finite-dimensional $\mathbb Q$-vector space.
Then the algebraic group
\[
\overline{\rho(\Gamma_\infty)}^{\,\mathrm{Zar}}_{\mathbb Q}
\subseteq\operatorname{GL}(V_{\mathbb Q})
\]
is solvable.
\end{Lemma}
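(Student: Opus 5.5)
The plan is to reduce the statement to the solvability of the abstract group $\Gamma_\infty$. The key input is that the Zariski closure of the image of a solvable group under a representation is again solvable.

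First, I will show that $\Gamma_\infty$ is solvable, in fact metabelian. Since $\Gamma_\infty\subseteq P_\infty(\mathbb R)$, every element has the form $p(a,b)$. The map
\[
p(a,b)\longmapsto a=(a_1,\ldots,a_n)\in(\mathbb R^\times)^n
\]
is a group homomorphism, because multiplying upper-triangular matrices multiplies their diagonal entries. Its image is abelian. Its kernel consists of the elements $p(1,b)$, which form a subgroup isomorphic to the additive group $\mathbb R^n$, and this is also abelian. Restricting to $\Gamma_\infty$ gives a short exact sequence
\[
1\to\Gamma_\infty\cap\{p(1,b)\}\to\Gamma_\infty\to A\to1
\]
with $A$ abelian and abelian kernel. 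Hence the derived series satisfies $\Gamma_\infty^{(2)}=1$. It follows that $\rho(\Gamma_\infty)$, a homomorphic image, is also solvable, with $\rho(\Gamma_\infty)^{(2)}=1$.

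Second, I will pass to the Zariski closure. Let $H=\rho(\Gamma_\infty)$ and $G=\overline{H}^{\mathrm{Zar}}_{\mathbb Q}$. I use the standard fact that, for any subgroup $H$ of $\operatorname{GL}(V_{\mathbb Q})$, the closure of the commutator subgroup satisfies
\[
\overline{[H,H]}=[\overline H,\overline H]
\]
as algebraic groups. At least the inclusion $[\overline H,\overline H]\subseteq\overline{[H,H]}$ holds, and that is the direction needed. It follows from the continuity of the commutator map. Fix $h\in H$. The set of $g\in G$ with $[g,h]\in\overline{[H,H]}$ is Zariski closed and contains $H$, so it contains $G$. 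Repeating the argument with $h$ replaced by an arbitrary $g'\in G$ gives $[G,G]\subseteq\overline{[H,H]}$ on points. Since $\overline{[H,H]}$ is a closed subgroup, it contains the algebraic subgroup generated by these commutators.

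Iterating, I get $G^{(k)}\subseteq\overline{H^{(k)}}$ for each $k$. Taking $k=2$ gives $G^{(2)}\subseteq\overline{\{1\}}=\{1\}$, so $G$ is solvable.

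The only delicate point is working with $\mathbb Q$-points versus geometric points, since $G$ is a $\mathbb Q$-algebraic group. I expect this to be mild. I will run the above argument over $\overline{\mathbb Q}$, or with $\mathbb C$-points of the Zariski closure, using the fact that Zariski closure commutes with field extension for a set of $\mathbb Q$-points. Solvability of an algebraic group can be checked after base change. Apart from this, the argument is formal.
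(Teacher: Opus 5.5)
Your proposal is correct and follows essentially the same route as the paper: the diagonal-entries homomorphism $p(a,b)\mapsto a$ has abelian image and abelian kernel $\{p(1,b)\}$, so $\Gamma_\infty$ and hence $\rho(\Gamma_\infty)$ are metabelian, and Zariski closure preserves solvability. The paper proves metabelianity for all of $P_\infty(\mathbb R)$ and simply cites that Zariski closures of solvable groups are solvable. You instead prove that fact via $[\overline H,\overline H]\subseteq\overline{[H,H]}$, correctly noting that the second step fixes $g\in G$ and varies the other argument, and you handle the base change to $\mathbb C$.
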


\begin{proof}
The surjective homomorphism
\[
P_\infty(\mathbb R)\longrightarrow(\mathbb R^\times)^n,\qquad
p(a,b)\longmapsto a,
\]
has abelian kernel
\(
U_\infty(\mathbb R)
=
\{p(1,b):b\in\mathbb R^n\}
\simeq(\mathbb R^n,+).
\)
Since its image is also abelian,
\(
[P_\infty(\mathbb R),P_\infty(\mathbb R)]
\subseteq U_\infty(\mathbb R),
\)
so $P_\infty(\mathbb R)$ is metabelian. Therefore
$\Gamma_\infty$ and $\rho(\Gamma_\infty)$ are metabelian,
and in particular solvable. Since the Zariski closure of a
solvable subgroup is solvable, $\overline{\rho(\Gamma_\infty)}^{\,\mathrm{Zar}}_{\mathbb Q}$ is solvable.
\end{proof}

We recall the following standard fact about linear algebraic groups:
\begin{Lemma}\label{semisimple-solvable}
Let $ H$ be a connected linear algebraic group. If $H$ is both
solvable and semisimple, then $ H$ is trivial.
\end{Lemma}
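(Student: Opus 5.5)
The plan is to use the structure theory of connected linear algebraic groups over a field of characteristic zero (which we may take algebraically closed after base change, since both solvability and semisimplicity are preserved). Recall that $H$ is \emph{semisimple} if its radical $R(H)$, the maximal connected solvable normal subgroup of $H$, is trivial. If $H$ is connected and solvable, then $H$ itself is a connected solvable normal subgroup of $H$, and by maximality $H\subseteq R(H)$, so $R(H)=H$. Semisimplicity then forces $H=R(H)=\{1\}$. That is the whole argument, and I would present it in exactly this order: first recall the definition of the radical, then observe that a connected solvable group is its own radical, then conclude.

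If one instead uses the alternative definition of semisimplicity via a nontrivial Lie-theoretic criterion, namely that $\operatorname{Lie}(H)$ is a semisimple Lie algebra (in characteristic zero), I would argue at the Lie algebra level. If $H$ is solvable, then so is $\mathfrak h=\operatorname{Lie}(H)$: the derived series of $H$ reaches $\{1\}$, and the Lie algebras of the derived subgroups contain the derived series of $\mathfrak h$. But a semisimple Lie algebra equals its own derived algebra, $[\mathfrak h,\mathfrak h]=\mathfrak h$. A solvable Lie algebra with $[\mathfrak h,\mathfrak h]=\mathfrak h$ must be zero. Hence $\mathfrak h=0$, so $\dim H=0$. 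Since $H$ is connected, this gives $H=\{1\}$.

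I do not expect any genuine obstacle; the only point requiring care is the connectedness hypothesis. For the intended application to the identity component of a Zariski closure of monodromy (presumably combined with Lemma~\ref{Metabelain} and the semisimplicity of the algebraic monodromy group of a variation of Hodge structure, via Deligne), one must apply the lemma to the identity component. I would note explicitly that the identity component of a solvable group is solvable and that semisimplicity is a statement about the identity component, so no generality is lost.
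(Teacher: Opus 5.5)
Your proof is correct and is the same as the paper's: a connected solvable group is its own radical, so $R(H)=H$, while semisimplicity means $R(H)=\{1\}$, hence $H=\{1\}$. The Lie-algebra variant and the remark about base change are valid but not needed.
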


\begin{proof}
Let $R(H)$ denote the radical of $H$, i.e. the maximal connected
solvable normal algebraic subgroup of $ H$. Since $H$ is connected
and solvable, we have
\(
R(H)= H.
\)
On the other hand, since $ H$ is semisimple, its radical is trivial:
\(
R( H)=\{1\}.
\)
Therefore $H=\{1\}$.
\end{proof}

Now we can prove Theorem \ref{alg thick}:

\begin{proof}[Proof of Theorem \ref{alg thick}]

Suppose that a subvariety $Z\subset\overline X$ not contained in $D$
does not meet $X_{\mathrm{thick}}$. Then the dense open subset
\(
Z^\circ:=Z\cap X
\)
is contained in the union of the canonical horoballs. Since $Z^\circ$
is connected, and these horoballs are pairwise
disjoint, it is contained in a single canonical horoball $H_i$ around
a cusp $c_i$. After conjugating $\Gamma$, we may assume that $c_i$ is
represented by $\infty$.

Let $\nu:\widetilde Z\to Z$ be a projective log resolution of
$(Z, D_{|Z})$ such that
\(
\widetilde{D}:=\bigl(\nu^{-1}(Z\cap D)\bigr)_{\mathrm{red}}
\)
has simple normal crossings, and set
\(
\widetilde Z^\circ:=\widetilde Z\setminus \widetilde{D}.
\)
The induced map
\(
f:\widetilde Z^\circ\to X
\)
factors through $H_i$. Since $U_i$ is contractible, $\Gamma_\infty$ acts freely on $U_i,$
and $H_i=\Gamma_\infty\backslash U_i$, we
have
\(
\pi_1(H_i)\simeq\Gamma_\infty.
\)
Therefore, after choosing compatible basepoints, \(f\) induces the
homomorphism
\[
u:\pi_1(\widetilde Z^\circ)\longrightarrow
\Gamma_\infty\subseteq\Gamma,\qquad
\Gamma_\infty\subseteq P_\infty(\mathbb R).
\]
The monodromy of the pullback of the standard variation is
\[
\rho\circ u:
\pi_1(\widetilde Z^\circ)
\longrightarrow\operatorname{GL}(V_{\mathbb Q}).
\]
 Consider the connected algebraic monodromy group
\[
\mathbf H_Z
:=
\left(
\overline{
(\rho\circ u)\bigl(\pi_1(\widetilde Z^\circ)\bigr)
}^{\,\mathrm{Zar}}_{\mathbb Q}
\right)^\circ
\subseteq\operatorname{GL}(V_{\mathbb Q}).
\]
Since
\(
(\rho\circ u)\bigl(\pi_1(\widetilde Z^\circ)\bigr)
\subseteq \rho(\Gamma_\infty),
\)
we have
\(
\mathbf H_Z
\subseteq
\overline{\rho(\Gamma_\infty)}^{\,\mathrm{Zar}}_{\mathbb Q}.
\)
By Lemma~\ref{Metabelain}, $\overline{\rho(\Gamma_\infty)}^{\,\mathrm{Zar}}_{\mathbb Q}$ is solvable,
so $\mathbf H_Z$ is solvable.

On the
other hand, the pullback of the standard variation to
\(\widetilde Z^\circ\) is a polarized pure \(\mathbb Q\)-variation of Hodge
structure. Therefore, André--Deligne semisimplicity
\cite[Corollary~1]{andre1992mumford} implies that the group \(\mathbf H_Z\) is semisimple. It follows
from Lemma~\ref{semisimple-solvable} that
\(\mathbf H_Z=\{1\}\). Therefore,
\((\rho\circ u)\bigl(\pi_1(\widetilde Z^\circ)\bigr)\) is finite.
Since \(\rho\) is faithful and \(\Gamma\) is torsion-free, we obtain
\[
u\bigl(\pi_1(\widetilde Z^\circ)\bigr)=\{1\}.
\]
Consequently, $f$ lifts to a holomorphic map
\(
\widetilde f:\widetilde Z^\circ\longrightarrow\mathbb H^n.
\)
Identifying $\mathbb H^n$ with the bounded polydisc $\Delta^n$, the
coordinate functions of $\widetilde f$ are bounded and therefore extend
holomorphically across $\widetilde{D}$. The resulting holomorphic functions on the
compact connected variety $\widetilde Z$ are constant. Hence
$\widetilde f$, and therefore $f$, is constant. This contradicts
$\dim Z>0$.
\end{proof}

We denote by \(X_{1,\mathrm{thick}}(\eta)\) the thick part of \(X_1(\eta)\) defined in Definition~\ref{core}.

\begin{Proposition}\label{inj thick}
For every \(x\in X_{1,\mathrm{thick}}(\eta)\), the following hold:
\[
\begin{array}{ll}
\mathrm{(i)}&\text{If \( |\Nm(\eta)|> 4^n\), then }
\displaystyle\operatorname{inj}_{X_1(\eta)}(x)\ge\frac{1}{2n}\log|\Nm(\eta)|-\frac12.\\[4pt]
\mathrm{(ii)}&\text{If \( |\Nm(\eta)|\ge5^n\), then }
\displaystyle\operatorname{inj}_{X_1(\eta)}(x)
\ge
\operatorname{arsinh}\left(\frac12|\Nm(\eta)|^{1/(2n)}\right)
\ge
\frac{1}{2n}\log|\Nm(\eta)|.
\end{array}
\]
\end{Proposition}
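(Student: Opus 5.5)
The plan is to bound \(d_{\mathbb H^n}(z,\gamma z)=\max_j d_{\mathbb H}(z_j,\gamma_jz_j)\) from below for a lift \(z\) of \(x\in X_{1,\mathrm{thick}}(\eta)\) and every \(1\ne\gamma\in\Gamma_1(\eta)\). I will control the maximum by the geometric mean over \(j\). For \(\gamma_j=\left(\begin{smallmatrix}a_j&b_j\\c_j&d_j\end{smallmatrix}\right)\) and \(w=z_j=x_j+iy_j\), the basic identity is
\[
\sinh\!\Bigl(\tfrac12 d_{\mathbb H}(w,\gamma_jw)\Bigr)=\frac{|c_jw^2+(d_j-a_j)w-b_j|}{2y_j}.
\]
Multiplying over \(j\) gives
\[
\max_j\sinh(d_j/2)\ \ge\ \Bigl(\prod_j\sinh(d_j/2)\Bigr)^{1/n}=\frac{\bigl|\prod_j Q_j(z_j)\bigr|^{1/n}}{2N(z)^{1/n}},
\qquad Q_j(w):=c_jw^2+(d_j-a_j)w-b_j .
\]
Since \(\operatorname{arsinh}\) is increasing, for (ii) it suffices to show \(\prod_j|Q_j(z_j)|/N(z)\ge |\Nm(\eta)|^{1/2}\). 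The final inequality \(\operatorname{arsinh}(\tfrac12 t^{1/(2n)})\ge\frac1{2n}\log t\) for \(t\ge5^n\) is elementary, as is the weaker bound (i), which uses only \(|\Nm(\eta)|>4^n\). By Lemma~\ref{no elliptic}, \(\Gamma_1(\eta)\) is torsion-free and its parabolic elements are unipotent, so the infimum defining \(\operatorname{inj}\) runs over genuinely nontrivial translations.

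I split into two cases according to whether \(\gamma\) fixes a cusp.

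\emph{Parabolic case.} Here \(\gamma\) is unipotent. I conjugate the cusp it fixes to \(\infty\) with the normalization \eqref{lattice normalization}. Then \(\gamma\) is a translation by some \(\lambda\in\Lambda_i\setminus\{0\}\), and \(\prod_j|Q_j|/N=|\Nm_\infty(\lambda)|/N_i(\zeta)\ge 1/N_i(\zeta)\). Because \(x\) lies outside the canonical horoball \(H_i=H_i(d)\), we have \(N_i(\zeta)\le 1/d\), so the quotient is at least \(d\). It then remains to import from \cite[\S3]{bakker2018geometric} a lower bound for the uniform depth of \(X_1(\eta)\) of the shape \(d\gtrsim|\Nm(\eta)|^{1/2}\); that bound is where the condition \(\alpha\equiv1\pmod\eta\), \(\gamma\in\eta\mathfrak a\) enters, uniformly in \(F\) and \(M\). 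The constant there fixes whether (i) or (ii) is obtained.

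\emph{Non-parabolic case.} I use the discriminant of \(Q_j\), which is \(\operatorname{tr}(\gamma_j)^2-4\). The trace \(t=\alpha+\delta\) satisfies \(t\equiv2\pmod\eta\) (as in the proof of Lemma~\ref{no elliptic}) and \(t\ne\pm2\). Hence \(t-2\in\eta\setminus\{0\}\), \(t+2\) is a nonzero integer, and \(|\Nm(t^2-4)|\ge|\Nm(\eta)|\). In each component where \(\gamma_j\) is hyperbolic, \(|Q_j(w)|/y\ge\sqrt{t_j^2-4}\), since the displacement is at least the translation length. In elliptic components this fails near the fixed point. There I plan to use instead that \(c\ne0\) at the cusp \(\infty\) and the thick-part condition at \emph{every} cusp: if \(z_j\) were close to a fixed point for many \(j\), I would estimate \(\prod_j|c_jz_j+d_j|\) through \(|\Nm c|\) and compare \(N(z)\) with \(N(\gamma z)\). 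The aim is to show that \(z\) would then lie in a horoball around the cusp \(\gamma^{-1}\infty\) or a related cusp, contradicting thickness.

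I expect this mixed hyperbolic/elliptic regime to be the main obstacle. A nontrivial element can have elliptic real components even though it is torsion-free globally, so neither pure argument applies directly. My first attempt will combine the two lower bounds componentwise: the discriminant bound on the hyperbolic factors, and the horoball-depth bound on the elliptic factors, via the \(c\ne0\) estimate against the cusp at \(\infty\). I will then check that the product still yields \(|\Nm(\eta)|^{1/2}\).
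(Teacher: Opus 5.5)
Your parabolic case is the same as the paper's: write \(\gamma\) as a translation by \(0\neq\lambda\in\Lambda_i\), use \(|\Nm_i(\lambda)|\ge1\) and \(N_i(z)\le 1/d\) on the thick part, and import the depth bound \(d\ge|\Nm(\eta)|^{1/2}\) from \cite[Corollary~3.5]{bakker2018geometric}.

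The gap is in the non-parabolic case, and your reduction creates it. You aim to prove \(\prod_j|Q_j(z_j)|/N(z)\ge|\Nm(\eta)|^{1/2}\). For non-parabolic \(\gamma\) with an elliptic component this inequality is false on the thick part, so the horoball comparison you sketch cannot prove it.

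Such \(\gamma\) exist for \(n\ge2\). Take \(\left(\begin{smallmatrix}1&1\\ \epsilon&1+\epsilon\end{smallmatrix}\right)\in\Gamma_1(\eta)\) with \(\epsilon\in\eta\mathfrak a\) and \(\sigma_j(\epsilon)\in(-4,0)\) for some \(j\); this is possible because \(\sigma_j(\eta\mathfrak a)\) is dense in \(\mathbb R\). If \(p\in\mathbb H\) is the fixed point of \(\gamma_j\), then \(Q_j(p)=0\). The preimage in \(\mathbb H^n\) of \(\bigcup_iH_i\) is a union of pairwise disjoint open horoballs. The connected slice \(\{z\in\mathbb H^n: z_j=p\}\) meets at least two of them: the one at \(\infty\) and the one at any other cusp, reached by letting some \(z_k\) with \(k\neq j\) run into that cusp. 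Hence the slice is not covered by the horoballs, and it contains lifts of thick points at which your product vanishes.

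The missing idea is that no averaging over components is needed, and thickness plays no role for non-parabolic \(\gamma\). Since \(d_{\mathbb H^n}\) is a maximum, a single good component suffices. Let \(t=\operatorname{tr}(\gamma)\). Then \(t-2\in\eta\setminus\{0\}\), so some \(j\) satisfies \(|\sigma_j(t)-2|\ge|\Nm(\eta)|^{1/n}\ge5\). Hence \(\gamma_j\) is hyperbolic, and its translation length gives \(\frac12d_{\mathbb H^n}(z,\gamma z)\ge\operatorname{arcosh}\bigl(\frac12|\Nm(\eta)|^{1/n}-1\bigr)\ge\operatorname{arsinh}\bigl(\frac12|\Nm(\eta)|^{1/(2n)}\bigr)\) for every \(z\in\mathbb H^n\). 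This is the paper's argument for (ii); for (i) the paper instead cites \cite[Corollary~3.10]{bakker2018geometric}.

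Your own discriminant idea also works once it is used this way. Choose \(j\) with \(|\sigma_j(t)^2-4|\ge|\Nm(t^2-4)|^{1/n}\ge|\Nm(\eta)|^{1/n}>4\). This forces \(\sigma_j(t)^2>4\). Your estimate \(|Q_j|/y_j\ge\sqrt{\sigma_j(t)^2-4}\) then gives \(\sinh\bigl(\frac12d_{\mathbb H}(z_j,\gamma_jz_j)\bigr)\ge\frac12|\Nm(\eta)|^{1/(2n)}\). This already holds under \(|\Nm(\eta)|>4^n\), so it also settles the non-parabolic case of (i).
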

\begin{proof}
Under either hypothesis, Lemma \ref{no elliptic} implies that $\Gamma_1(\eta)$ is torsion-free and every parabolic element is unipotent.

Let \(z\in\Hu^n\) be a lift of \(x\), and let
\(1\ne\gamma\in\Gamma_1(\eta)\). If \(\gamma\) is semisimple, then
\cite[Corollary~3.10]{bakker2018geometric} gives
\[
\frac12d_{\Hu^n}(z,\gamma z)
\ge \frac1n\log|\Nm(\eta)|-1,
\]
and the desired bound follows for this case. Suppose now that \(\gamma\) is parabolic and stabilizes the cusp \(c_i\).
In coordinates centered at \(c_i\), write
\(
\gamma z=z+\lambda
\)
with \(0\ne\lambda\in\Lambda_i\). Setting
\(y_j=\operatorname{Im}(z_j)\), we have
\[
d_{\Hu}(z_j,z_j+\lambda_j)
=
2\sinh^{-1}\left(\frac{|\lambda_j|}{2y_j}\right)
\ge
2\log\left(\frac{|\lambda_j|}{y_j}\right).
\]
Since the Kobayashi distance on \(\Hu^n\) is the maximum of the
coordinatewise distances,
\begin{align*}
d_{\Hu^n}(z,z+\lambda)
&\ge
2\max_j\log\left(\frac{|\lambda_j|}{y_j}\right)\\
&\ge
\frac{2}{n}\sum_{j=1}^n
\log\left(\frac{|\lambda_j|}{y_j}\right)
=
\frac{2}{n}
\log\left(\frac{|\Nm_i(\lambda)|}{N_i(z)}\right).
\end{align*}
By the normalization \eqref{lattice normalization},
\(
|\Nm_i(\lambda)|\ge1.
\)
Moreover, since \(x\in X_{1,\mathrm{thick}}(\eta)\),
\(
N_i(z)^{-1}\ge d,
\)
where \(d\) is the uniform depth of the cusps. Hence
\cite[Corollary~3.5]{bakker2018geometric} yields
\[
d_{\Hu^n}(z,\gamma z)
\ge\frac{2}{n}\log d
\ge\frac{1}{n}\log|\Nm(\eta)|,
\]
and the claimed bound for (i) follows.

Suppose now that \(|\Nm(\eta)|\ge5^n\). For semisimple \(\gamma\), the nonzero element \(\operatorname{tr}(\gamma)-2\in\eta\) has an embedding \(\sigma_j\) satisfying
\(
\left|\sigma_j\bigl(\operatorname{tr}(\gamma)\bigr)-2\right|\ge|\Nm(\eta)|^{1/n}.
\)
Therefore
\[
\frac12d_{\Hu^n}(z,\gamma z)
\ge\operatorname{arcosh}\left(\frac12|\Nm(\eta)|^{1/n}-1\right)
\ge\sinh^{-1}\left(\frac12|\Nm(\eta)|^{1/(2n)}\right),
\]
where the last inequality follows from \(|\Nm(\eta)|^{1/n}\ge5\). If $\gamma$ is parabolic, set $u_j=|\lambda_j|/y_j$. Then
\[
\prod_{j=1}^nu_j=\frac{|\Nm_i(\lambda)|}{N_i(z)}\ge d\ge|\Nm(\eta)|^{1/2},
\]
and hence
\[
\frac12d_{\Hu^n}(z,\gamma z)=\max_j\operatorname{arsinh}\left(\frac{u_j}{2}\right)
\ge\operatorname{arsinh}\left(\frac12\left(\prod_{j=1}^nu_j\right)^{1/n}\right)
\ge\operatorname{arsinh}\left(\frac12|\Nm(\eta)|^{1/(2n)}\right).
\]
The remaining inequality follows from $\operatorname{arsinh}(t/2)\ge\log t$ for $t>0$. This proves (ii).
\end{proof}

Now we can conclude a uniform bound on the degrees of all subvarieties with respect to the log-canonical bundle:  
\begin{theorem}\label{log deg} Suppose that $|\Nm(\eta)|>4^n$. Let $\Xb(\eta)$ be a smooth toroidal compactification of $X_1(\eta)$ with boundary divisor $D$ and let $Z$ be an irreducible subvariety of $\Xb(\eta)$ which is not contained in $D.$ Then,
\begin{align*}
    (K_{\Xb(\eta)}+D)^m \cdot Z
    \ge
    2^m\sinh^{2m}\bi( \frac{1}{4n}\log |\Nm(\eta)|-\frac{1}{4}\bi),
\end{align*}
where $m$ is the dimension of $Z.$ Moreover, if $|\Nm(\eta)|\ge5^n$, then
\[
(K_{\Xb(\eta)}+D)^m\cdot Z
\ge2^m\sinh^{2m}\left(\frac12\operatorname{arsinh}\left(\frac12|\Nm(\eta)|^{1/(2n)}\right)\right).
\]
\end{theorem}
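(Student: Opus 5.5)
The argument chains the three ingredients already in place: Theorem~\ref{alg thick} supplies a point of large injectivity radius on $Z$, Theorem~\ref{Hwang--To} converts that into a hyperbolic volume lower bound, and \eqref{degree-volume} converts volume into log-canonical degree.

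First, let $Z\subset\Xb(\eta)$ be an $m$-dimensional irreducible subvariety not contained in $D$. By Theorem~\ref{alg thick}, $Z$ meets the thick part, so there is a point $x\in Z^\circ\cap X_{1,\mathrm{thick}}(\eta)$. Since $|\Nm(\eta)|>4^n$, Lemma~\ref{no elliptic} shows that $\Gamma_1(\eta)$ is torsion-free, so Theorem~\ref{Hwang--To} applies on $X_1(\eta)$.

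Second, Proposition~\ref{inj thick}(i) gives
\[
r:=\operatorname{inj}_{X_1(\eta)}(x)\ge\frac{1}{2n}\log|\Nm(\eta)|-\frac12 .
\]
This quantity is positive because $|\Nm(\eta)|>4^n>e^n$. Applying Theorem~\ref{Hwang--To} to $Z^\circ$ through $x$, and using $\operatorname{mult}_x(Z)\ge1$ together with the monotonicity of $\sinh$ on $[0,\infty)$, we obtain
\[
\operatorname{vol}_{X}(Z)\ge\frac{(4\pi)^m}{m!}\sinh^{2m}\Bigl(\frac{1}{4n}\log|\Nm(\eta)|-\frac14\Bigr).
\]
Multiplying by $m!/(2\pi)^m$ and invoking \eqref{degree-volume} yields the factor $(4\pi)^m/(2\pi)^m=2^m$, which is exactly the first inequality. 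For the second inequality we repeat the argument with Proposition~\ref{inj thick}(ii), which gives $r\ge\operatorname{arsinh}\bigl(\tfrac12|\Nm(\eta)|^{1/(2n)}\bigr)$, and substitute $r/2$ into the same chain.

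The only delicate point is the compatibility of normalizations. Theorem~\ref{Hwang--To} is stated for the Kähler form $\omega_X$ and for the injectivity radius in the Kobayashi (max) distance, and the paper has set up both exactly in that form. The identity \eqref{degree-volume} uses the same $\omega_X$ and integrates over the regular locus of $Z^\circ$, so it applies to possibly singular $Z$ via Mumford's good-metric argument. The pieces therefore fit without modification. One should also note that Hwang--To is applied to the subvariety $Z^\circ$ of the noncompact $X$ passing through $x$, which is the setting of that theorem.
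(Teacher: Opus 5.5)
Your proposal is correct and follows the paper's proof: Theorem~\ref{alg thick} gives a point of $Z$ in the thick part, Proposition~\ref{inj thick} bounds its injectivity radius, and Theorem~\ref{Hwang--To} together with \eqref{degree-volume} converts this into the degree bound, with the factor $(4\pi)^m/(2\pi)^m=2^m$. You also check two details the paper leaves implicit: the radius bound is positive because $4>e$, and $\Gamma_1(\eta)$ is torsion-free by Lemma~\ref{no elliptic}.
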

\begin{proof}
By Theorem \ref{alg thick}, the variety $Z$ intersects $X_{1,\mathrm{thick}}(\eta)$ and therefore by Proposition \ref{inj thick} the subvariety $Z$ contains a point with injectivity radius at least $\frac{1}{2n}\log |\Nm(\eta)|-\frac12$ on $X_1(\eta).$ Hence, Hwang-To's inequality \eqref{Hwanginequilty}, together with \eqref{degree-volume}, provides us a uniform bound for the intersection of $Z$ with the log-canonical bundle:
\begin{align*}
    (K_{\Xb(\eta)}+D)^m \cdot Z =  \frac{m!}{(2\pi)^m}\operatorname{vol}_{X_1(\eta)}(Z)
    \ge 
    2^m\sinh^{2m}( \frac{1}{4n}\log |\Nm(\eta)|-\frac{1}{4}).
\end{align*}
If $|\Nm(\eta)|\ge5^n$, the same argument using Proposition~\ref{inj thick}(ii) gives the second part. \end{proof}
\begin{Remark} Note that the right-hand side of the inequalities in Theorem \ref{log deg} depends only on the norm of the ideal $\eta$ and the degree of the field of multiplication over $\Q$ and therefore it is independent of the choice of field or the compactification.   
\end{Remark}

\section{Canonical Volume}

The volume of a line bundle $L$ on an $m$-dimensional projective variety $V$ is defined to be the non-negative real number 
$$\operatorname{vol}_{V}(L):=\limsup\limits_{b\rightarrow \infty}\dfrac{h^0(V, bL)}{b^m/m!},$$
which measures the positivity of $L$ from the point of view of birational geometry. If $L$ is a nef line bundle on $V$, then $\operatorname{vol}_{V}(L)= L^{m}.$ Also, $L$ is big if and only if $\operatorname{vol}_{V}(L)>0.$

The canonical volume of a quasi-projective $Z$ is the volume of the canonical bundle of any smooth projective variety birational to $Z$. More explicitly, if $\overline{Z}$ is a smooth projective variety birational to $Z$, then
\[
\widetilde{\operatorname{vol}}_{Z}
:=
\operatorname{vol}_{\overline{Z}}(K_{\overline{Z}}).
\]
is the canonical volume of $Z$. This quantity is independent of the choice of the smooth projective model $\overline{Z}$. In particular, when $Z$ is an integral curve, then
\(
\widetilde{\operatorname{vol}}_{Z}=\max\{2g-2,0\},
\)
where $g$ is the geometric genus of $Z$. 
\begin{Remark}\label{volume-generically-finite}
Let \(f\colon Z\dashrightarrow Z'\) be a dominant generically finite rational map of degree \(d\) between integral quasi-projective varieties. After passing to smooth projective models, we obtain a generically finite morphism \(\overline f\colon\overline Z\to\overline{Z'}\). The ramification formula gives \(K_{\overline Z}=\overline f^{*}K_{\overline{Z'}}+R\)
for some effective divisor \(R\). Hence
\[
\widetilde{\operatorname{vol}}_{Z}=\operatorname{vol}(K_{\overline Z})\ge\operatorname{vol}(\overline f^{*}K_{\overline{Z'}})=d\,\operatorname{vol}(K_{\overline{Z'}})=d\,\widetilde{\operatorname{vol}}_{Z'}.\]
\end{Remark}

The bridge between degrees with respect to the log-canonical bundle and canonical volumes is provided by the following proposition, which follows from the Arakelov inequality proved by Brunebarbe \cite[Theorem 1.6]{brunebarbe2020increasing}. Brunebarbe's result applies in a more general setting, where Griffiths transversality is a non-trivial condition. In the setting of Shimura varieties, however, this condition is automatically satisfied, and his theorem gives the following Proposition:

\begin{Proposition}\label{Canonical vol est}
Suppose that $|\Nm(\eta)|>4^n$. Then, for every $m$-dimensional subvariety $Z\subset\overline X_1(\eta)$ not contained in $D$, we have
\[\widetilde{\operatorname{vol}}_{Z}
\geq
\frac{1}{n^m}
\operatorname{vol}_{Z}\left(
\left.(K_{\overline X_1(\eta)}-(n-1)D)\right|_Z
\right).
\]
\end{Proposition}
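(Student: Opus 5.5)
The inequality should follow from the Arakelov inequality of Proposition~\ref{Arakelov}. The plan is to absorb the reduced boundary $\widetilde D$ into the pullback of $D$, and then use monotonicity and homogeneity of the volume function.

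Fix $Z$ and a log resolution $\nu:\widetilde Z\to Z$ of $(Z,D|_Z)$ as in Proposition~\ref{Arakelov}, with $\widetilde D=(\nu^*(D|_Z))_{\mathrm{red}}$. Since $\widetilde Z$ is a smooth projective model of $Z$, we have $\widetilde{\operatorname{vol}}_Z=\operatorname{vol}(K_{\widetilde Z})$. Proposition~\ref{Arakelov} says that
\[
P_0:=K_{\widetilde Z}+\widetilde D-\tfrac1n\nu^*\bigl((K_{\overline X_1(\eta)}+D)|_Z\bigr)
\]
is pseudoeffective. The Cartier divisor $\nu^*(D|_Z)$ is effective with support exactly $\widetilde D$, and each coefficient is a positive integer. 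Hence $E:=\nu^*(D|_Z)-\widetilde D$ is effective, and
\[
P:=P_0+E=K_{\widetilde Z}+\nu^*(D|_Z)-\tfrac1n\nu^*\bigl((K_{\overline X_1(\eta)}+D)|_Z\bigr)
=K_{\widetilde Z}-\tfrac1n\nu^*\bigl((K_{\overline X_1(\eta)}-(n-1)D)|_Z\bigr)
\]
is again pseudoeffective. Writing $M:=(K_{\overline X_1(\eta)}-(n-1)D)|_Z$, this gives $K_{\widetilde Z}=\tfrac1n\nu^*M+P$ with $P$ pseudoeffective.

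Next I compare volumes. If $M$ is not big, the right-hand side is zero and there is nothing to prove. Otherwise $L:=\tfrac1n\nu^*M$ is a big $\mathbb Q$-divisor. Fix an ample $A$ on $\widetilde Z$. For every rational $\varepsilon>0$ the class $P+\varepsilon A$ is big, so some multiple of it is effective up to numerical equivalence. Since volume depends only on the numerical class and is monotone under adding effective divisors, we get
\[
\operatorname{vol}(K_{\widetilde Z}+\varepsilon A)\ge\operatorname{vol}(L).
\]
Letting $\varepsilon\to0$ and using continuity of $\operatorname{vol}$ on the big cone gives $\operatorname{vol}(K_{\widetilde Z})\ge\operatorname{vol}(L)$; this continuity applies because $K_{\widetilde Z}$ is then big, being $L+P$ with $L$ big. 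Homogeneity of degree $m$ gives $\operatorname{vol}(L)=n^{-m}\operatorname{vol}(\nu^*M)$. Finally, $\operatorname{vol}_{\widetilde Z}(\nu^*M)=\operatorname{vol}_Z(M)$ by birational invariance of the volume under pullback along the birational morphism $\nu$ (pass through the normalization of $Z$ if needed, where sections of $bM$ pull back isomorphically). This gives the claimed inequality.

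The only delicate step is the volume comparison $\operatorname{vol}(L+P)\ge\operatorname{vol}(L)$ for $P$ merely pseudoeffective rather than effective. The perturbation-and-continuity argument above handles this, together with the trivial case when $M$ is not big. Everything else is a formal consequence of Proposition~\ref{Arakelov} and the inequality $\nu^*(D|_Z)\ge\widetilde D$.
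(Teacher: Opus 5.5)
Your proposal is correct and follows the paper's proof. You apply Proposition~\ref{Arakelov}, absorb $\widetilde D$ into $\nu^*(D|_Z)$ to get pseudoeffectivity of $K_{\widetilde Z}-\frac1n\nu^*\bigl((K_{\overline X_1(\eta)}-(n-1)D)|_Z\bigr)$, and conclude by monotonicity of volume in pseudoeffective directions together with homogeneity; your perturbation-and-continuity argument and birational-invariance remark just spell out steps the paper asserts in one line (volume is continuous on all of $N^1(\widetilde Z)_{\mathbb R}$, so checking that $K_{\widetilde Z}$ is big is not actually needed).
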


\begin{proof}
Let $\nu:\widetilde Z\to Z$ be a log resolution of $(Z,D|_Z)$ and set
\(
\widetilde D=(\nu^*(D|_Z))_{\mathrm{red}}.
\)
By Proposition~\ref{Arakelov}, we know that
\(
K_{\widetilde Z}+\widetilde D
-\frac1n\nu^*
\left(\left.(K_{\overline X_1(\eta)}+D)\right|_Z\right)
\)
is pseudoeffective. Since
\(
\nu^*(D|_Z)-\widetilde D
\)
is effective, it follows that
\(
K_{\widetilde Z}
-\frac1n\nu^*
\left(\left.(K_{\overline X_1(\eta)}-(n-1)D)\right|_Z\right)
\)
is pseudoeffective. Since the volume does not decrease in pseudoeffective direction we get 
\[
\widetilde{\operatorname{vol}}_{Z}
=
\operatorname{vol}_{\widetilde Z}(K_{\widetilde Z})
\geq
\operatorname{vol}_{Z}\left(
\frac1n\left.(K_{\overline X_1(\eta)}-(n-1)D)\right|_Z
\right).
\]
\end{proof}
The stable base locus of a line bundle $L$ is defined as
\[
\mathbf B(L):=\bigcap_{m\ge1}\operatorname{Bs}(L^{\otimes m}).
\]
Given an ample line bundle $A$, the augmented base locus of $L$ is
\[
\mathbf B_+(L)
:=
\bigcap_{\varepsilon>0}
\mathbf B(L-\varepsilon A),
\]
where $\varepsilon$ ranges over sufficiently small positive rational
numbers. This definition is independent of the choice of $A$. 
\begin{definition}\label{ample mod def} A line bundle $L$ is called \emph{ample modulo} a closed subset $E$ if
\(
\mathbf B_+(L)\subset E.
\)    
\end{definition}

\begin{Lemma}\label{ample rest} Let $Y$ be a projective variety, let $E\subset Y$ be a closed subset, and let $L$ be a line bundle on $Y$. Suppose that $L$ is ample modulo $E.$
Then for every irreducible subvariety $Z\subset Y$ with $Z\not\subset E$, the restricted line bundle $L|_Z$ is big.
 
\end{Lemma}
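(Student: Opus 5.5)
The plan is to deduce bigness of \(L|_Z\) from the definition of the augmented base locus, using the characterization that a line bundle on a projective variety is big if and only if some positive multiple is an ample plus an effective divisor.

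\textbf{Step 1: find a section that does not vanish on \(Z\).} Fix an ample line bundle \(A\) on \(Y\). Since \(Z\not\subset E\) and \(\mathbf B_+(L)\subset E\), we may choose a point \(z\in Z\setminus E\); this point satisfies \(z\notin\mathbf B_+(L)\). By the definition of \(\mathbf B_+\) as an intersection over small rational \(\varepsilon\), there is a rational \(\varepsilon=p/q>0\) with \(z\notin\mathbf B(L-\varepsilon A)\). After clearing denominators (passing to \(qL-pA\)), there is an integer \(k\ge1\) with \(z\notin\operatorname{Bs}(k(qL-pA))\). Hence there is a section
\[
s\in H^0\bigl(Y,k(qL-pA)\bigr)\quad\text{with}\quad s(z)\neq0.
\]

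\textbf{Step 2: restrict to \(Z\) and write \(L|_Z\) as ample plus effective.} The restriction \(s|_Z\) is a nonzero section of \(kq\,L|_Z-kp\,A|_Z\) on \(Z\). Because \(Z\) is irreducible and reduced, \(s|_Z\) cuts out an effective Cartier divisor \(G\) on \(Z\). This gives
\[
kq\,L|_Z\sim kp\,A|_Z+G .
\]
Since \(A|_Z\) is ample on the projective variety \(Z\), this exhibits \(kq\,L|_Z\) as ample plus effective.

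\textbf{Step 3: conclude bigness.} For every \(b\ge1\), multiplication by \(s|_Z^{\,b}\) gives an injection
\[
H^0\bigl(Z,bkp\,A|_Z\bigr)\hookrightarrow H^0\bigl(Z,bkq\,L|_Z\bigr).
\]
The dimension of the left-hand side grows like \(b^{\dim Z}\), because \(A|_Z\) is ample. Therefore \(\operatorname{vol}(L|_Z)>0\), that is, \(L|_Z\) is big.

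The only delicate point is Step 2. One must ensure that the section does not vanish identically on \(Z\), which is exactly why \(z\) is chosen on \(Z\) itself. One must also handle rational \(\varepsilon\) and the base-locus definition via multiples, which is routine. Singularities of \(Z\) cause no issue, because the bigness criterion through the growth of sections holds on any projective variety. Alternatively, one could pass to a resolution of \(Z\) and pull back, since the pullback of a big line bundle under a birational morphism remains big.
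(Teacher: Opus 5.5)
Your proposal is correct and takes essentially the same route as the paper. Both find a section of a twist $mL-A$ (your $k(qL-pA)$) that does not vanish identically on $Z$, so that a multiple of $L|_Z$ is ample plus effective and hence big. The only difference is cosmetic: the paper invokes the stabilization $\mathbf B_+(L)=\operatorname{Bs}(mL-A)$ for $m\gg0$, whereas you unwind the definition of $\mathbf B_+$ at a point $z\in Z\setminus E$, which avoids needing that fact.
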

\begin{proof}  The proof is similar to the proof of \cite[Lemma 4.7]{memariansorkhabi2022positivity}.
Since $Z\not\subset E$ and $\mathbf B_+(L)\subset E$, we have
\(
Z\not\subset \mathbf B_+(L).
\)
Choose an ample line bundle $A$ on $Y$. For $m\gg 0$,
\[
\mathbf B_+(L)=\operatorname{Bs}(mL-A).
\]
Thus $Z$ is not contained in $\operatorname{Bs}(mL-A)$, so there exists a section of $mL-A$ which does not vanish identically on $Z$. Hence
\(
(mL-A)|_Z
\)
has a nonzero section. Therefore
\[
mL|_Z \cong A|_Z+\Sigma
\]
for some effective divisor $\Sigma$ on $Z$. Since $A|_Z$ is ample, it is big. Adding an effective divisor preserves bigness, so $mL|_Z$ is big. Hence $L|_Z$ is big.
\end{proof}

We will use the following positivity result of Bakker-Tsimerman: 
\begin{Proposition}\label{Tsi}(\cite[Theorem E]{bakker2018geometric})\label{PosCan} Suppose that $|\Nm(\eta)|>4^n.$
If $\lambda> 0$ and
\(
|\Nm(\eta)|>
\left(\frac{2\pi\lambda}{n}\right)^{2n},
\)
then the divisor
\(
K_{\Xb(\eta)}+(1-\lambda)D
\)
is ample modulo $D$.
\end{Proposition}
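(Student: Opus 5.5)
The plan is to represent $K_{\Xb(\eta)}+(1-\lambda)D=(K_{\Xb(\eta)}+D)-\lambda D$ by a singular hermitian metric whose curvature current is a strictly positive Kähler current on $X_1(\eta)$ and is singular only along $D$. Then a Demailly-type regularization and Boucksom's characterization of the non-Kähler locus show that $\mathbf B_+$ is contained in $D$, i.e.\ the divisor is ample modulo $D$ in the sense of Definition~\ref{ample mod def}. By \eqref{ChernForm}, $K_{\Xb(\eta)}+D$ is represented by $\omega_X/(2\pi)$, which is a good metric. So the whole task is to put a metric on $\mathcal O(D)$ whose curvature is bounded above by a multiple $c\,\omega_X$ with $c<1/(2\pi\lambda)$. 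The argument has to be done at the level of metrics, not intersection numbers, because Lemma~\ref{ample rest} and the definition of $\mathbf B_+$ require bigness uniformly on all subvarieties.

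\textbf{Step 1: a metric on $\mathcal O(D)$ near each cusp.} In the toric chart at a cusp $c_i$, each boundary component is $\{q^\chi=0\}$ and $-\log|q^\chi|^2=4\pi\,\chi(\operatorname{Im}\zeta)$ is linear in $y=\operatorname{Im}\zeta$. Hence the canonical section $s_D$ admits, on $\Gamma_i\backslash U_i(d)$, a metric of the form $|s_D|^2=\exp(-4\pi\,\ell_i(y))$, where $\ell_i$ is a piecewise linear, $\Gamma_i/\Lambda_i$-invariant function on the cone, determined by the fan and the boundary multiplicities. Such a metric is pluriharmonic away from the walls and is smooth on the toroidal chart. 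I will compare $\ell_i$ with $N_i$: after the normalization \eqref{lattice normalization}, AM--GM should give an estimate of the shape $\ell_i(y)\ge n\,N_i(y)^{1/n}$ on $U_i(d)$.

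\textbf{Step 2: gluing.} Outside the horoballs, I take the metric to be trivial. To glue the two metrics, I will use a cutoff $\rho(\log(dN_i))$ in the collar between depths $d$ and, say, $d/e$. The curvature of the glued metric is then $i\partial\bar\partial$ of $\rho\cdot 4\pi\ell_i$. The terms involving $\rho'$ and $\rho''$ are controlled by $i\partial\log N_i\wedge\bar\partial\log N_i\le n\,\omega$ and by $|i\partial\bar\partial\log N_i|\le\omega$, both computed in the coordinates $y_j$. The size of $\ell_i$ in the collar is about $n d^{-1/n}$ times the inverse depth factor. With the correct normalizations, I expect the total bound to be $\Theta(\mathcal O(D))\le \frac{1}{n d^{1/n}}\,\omega$.

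\textbf{Step 3: conclusion.} Combining Step 2 with $\omega_X/(2\pi)$ gives a curvature current for $K_{\Xb(\eta)}+(1-\lambda)D$ of at least $\bigl(\frac1{2\pi}-\frac{\lambda}{n d^{1/n}}\bigr)\omega_X$. Using $d\ge|\Nm(\eta)|^{1/2}$ from \cite[Corollary~3.5]{bakker2018geometric}, this lower bound is positive exactly when $|\Nm(\eta)|>(2\pi\lambda/n)^{2n}$. A Kähler current with analytic singularities contained in $D$ then gives $\mathbf B_+\subset D$.

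The main obstacle is Step 2: obtaining the sharp constant in the gluing estimate, uniformly in the fan and in the field $F$. My first attempt would be to avoid a smooth cutoff altogether. Instead I would take a maximum $\max\{0,\,4\pi(\ell_i-c_0)\}$ of pluriharmonic potentials, whose curvature has the correct sign, and absorb all the loss into the choice of the depth $d$.
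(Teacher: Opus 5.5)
The paper gives no proof of this statement; it imports it from Bakker--Tsimerman, where it comes out of a hyperbolic-volume analysis in the cusp neighbourhoods. Your route is different: a metric on $\mathcal O(D)$ with curvature at most $c\,\omega_X$, for $c$ slightly above $1/(nd^{1/n})$, followed by Boucksom's description of $\mathbf B_+$. This is essentially the dual form of such a volume estimate, and the constant you aim for is the right one. But Step~2, which carries all the content, is not done, and the ingredients you propose for it are wrong.

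\emph{The Step~1 comparison is backwards.} At every ray generator $v$ one has $\ell_i(v)=1$, whereas $nN_i(v)^{1/n}\ge n$ by the normalization of $\Lambda_i$. What is true, and what is actually needed, is $\ell_i\le N_i^{1/n}$. This follows from superadditivity of the geometric mean together with $N_i(v)\ge1$ for nonzero totally positive $v\in\Lambda_i$.

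\emph{The toric weight need not have the right sign.} The weight $e^{-4\pi\ell_i}$ is not smooth. Its curvature $2i\partial\bar\partial\ell_i(\operatorname{Im}\zeta)$ is a measure on the walls of the fan, and it is non-positive only where $\ell_i$ is concave. For a non-minimal regular fan, $\ell_i$ has convex kinks: for $n=2$, a toric blow-up putting a $(-1)$-curve into $D$ already does this. Then no bound $\Theta\le c\,\omega_X$ holds. The fix is to replace $\ell_i$ by the concave, unit-invariant function $\mu_i(y)=\sup\{t:y\in tP_i\}$, where $P_i$ is the convex hull of the totally positive vectors of $\Lambda_i$. It satisfies $\ell_i\le\mu_i\le N_i^{1/n}$ for every regular fan.

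\emph{Your gluing has the wrong sign.} The function $\max\{0,4\pi(\ell_i-c_0)\}$ puts a positive measure on the seam $\{\ell_i=c_0\}$. Since $D$ enters with coefficient $-\lambda$, this is exactly the wrong sign. You need a minimum instead, and the competing function cannot be pluriharmonic. It must leave the depth-$d$ boundary with zero slope and still overtake $4\pi\mu_i$.

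\emph{The missing computation.} For $f=\Phi(\log N_i)$ and $u_j=\log y_j$, the condition $\frac{i}{2\pi}\partial\bar\partial f\le c\,\omega_X$ becomes $\bigl(\partial_{u_j}\partial_{u_k}f\bigr)-\operatorname{diag}(\partial_{u_j}f)\le4\pi c$. Equivalently, $n\Phi''-\Phi'\le4\pi c$ and $\Phi'\ge-4\pi c$. The extremal solution with $\Phi=\Phi'=0$ at $N_i=1/d$ is
\[
\Phi=4\pi c\bigl(n(dN_i)^{1/n}-n-\log(dN_i)\bigr).
\]
Set $\phi=\min\{4\pi\mu_i+C,\Phi\}$ on $H_i$ and $\phi=0$ on the thick part. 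This glues in a $C^1$ way. Since $-\phi$ is a maximum of functions $g$ with $\frac{i}{2\pi}\partial\bar\partial g\ge-c\,\omega_X$, we get $\frac{i}{2\pi}\partial\bar\partial\phi\le c\,\omega_X$. As soon as $cnd^{1/n}>1$, $\Phi$ eventually dominates $4\pi N_i^{1/n}+C\ge4\pi\mu_i+C$. Hence near $D$ the weight equals $4\pi\mu_i+C\ge4\pi\ell_i+C$. It therefore defines a singular metric on $\mathcal O(D)$ with $\Theta\le c\,\omega_X$ on all of $\Xb(\eta)$; extending across $D$ only adds non-positive multiples of boundary components. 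This is where $nd^{1/n}$ comes from. Since $d$ is fixed by $\Gamma$, there is no loss that can be absorbed into it.

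\emph{Step~3 is also incomplete.} Choose $c\in\bigl(\frac1{nd^{1/n}},\frac1{2\pi\lambda}\bigr)$; this interval is nonempty by the hypothesis and $d\ge|\Nm(\eta)|^{1/2}$. The resulting current $T\ge(\frac1{2\pi}-\lambda c)\,\omega_X$ is not a K\"ahler current on $\Xb(\eta)$. Indeed, $\omega_X$ degenerates like $1/\log^2|q|$ in directions tangent to $D$, and its potential has $\log\log$ rather than analytic singularities. To repair this, take $E\ge0$ supported on $D$ with $-E$ ample relative to the Baily--Borel contraction $\pi$. Then $m\pi^*A-E$ is ample for $m\gg0$, where $\pi^*A\simeq b(K_{\Xb(\eta)}+D)$; let $\Omega$ be a K\"ahler form in its class. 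For small $\delta>0$, replace $T$ by $T-\delta mb\frac{\omega_X}{2\pi}+\delta[E]+\delta\Omega$. This is a K\"ahler current in $c_1(K_{\Xb(\eta)}+(1-\lambda)D)$ whose Lelong numbers vanish off $D$. Boucksom's identification of $\mathbf B_+(L)$ with the non-K\"ahler locus of $c_1(L)$ then gives $\mathbf B_+\subseteq D$.
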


\begin{Lemma}\label{Bigness of small twist}
Suppose $c>n/\pi$ and set
$N(n,c):=1+\left\lfloor(2\pi c/n)^{2n}\right\rfloor$.
If $|\Nm(\eta)|\ge N(n,c)$, then for every subvariety
$Z\subset\Xb(\eta)$ intersecting with $X_1(\eta)$, the divisor
\(
\bigl(K_{\Xb(\eta)}+(1-c)D\bigr)|_Z
\)
is big.
\end{Lemma}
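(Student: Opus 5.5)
The plan is to apply Proposition~\ref{Tsi} with $\lambda:=c$ and then restrict using Lemma~\ref{ample rest}.

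\textbf{Step 1: the hypotheses of Proposition~\ref{Tsi}.} Since $c>n/\pi$, we have $2\pi c/n>2$, so $(2\pi c/n)^{2n}>4^n$. The integer $N(n,c)=1+\lfloor(2\pi c/n)^{2n}\rfloor$ strictly exceeds $(2\pi c/n)^{2n}$. Hence $|\Nm(\eta)|\ge N(n,c)$ gives two facts at once:
\[
|\Nm(\eta)|>\left(\frac{2\pi c}{n}\right)^{2n}
\quad\text{and}\quad
|\Nm(\eta)|>4^n .
\]
The second fact guarantees that $\Gamma_1(\eta)$ is torsion-free with unipotent parabolics (Lemma~\ref{no elliptic}). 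It also means the smooth toroidal compactification $\Xb(\eta)$ with snc boundary $D$ is defined as in the setup. Applying Proposition~\ref{Tsi} with $\lambda=c>0$ then shows that $K_{\Xb(\eta)}+(1-c)D$ is ample modulo $D$, i.e. $\mathbf B_+\bigl(K_{\Xb(\eta)}+(1-c)D\bigr)\subset D$.

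\textbf{Step 2: restriction to $Z$.} Let $Z\subset\Xb(\eta)$ be a subvariety meeting $X_1(\eta)=\Xb(\eta)\setminus D$. Then $Z\not\subset D$. We apply Lemma~\ref{ample rest} with $Y=\Xb(\eta)$, $E=D$ and $L=K_{\Xb(\eta)}+(1-c)D$, and conclude that $L|_Z$ is big.

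\textbf{The only delicate point.} $L$ is a $\mathbb Q$- or $\mathbb R$-divisor when $c$ is not an integer, whereas Lemma~\ref{ample rest} is phrased for line bundles. This is handled in one of two ways.
\begin{itemize}
\item If $c$ is rational, write $c=p/q$ and apply the lemma to the integral divisor $qL$. Bigness of $qL|_Z$ is equivalent to bigness of $L|_Z$.
\item If $c$ is irrational, note that the inequality $|\Nm(\eta)|>(2\pi c/n)^{2n}$ is strict, so we can choose a rational $c'>c$ still satisfying it. Then
\[
L|_Z=\bigl(K_{\Xb(\eta)}+(1-c')D\bigr)|_Z+(c'-c)\,D|_Z .
\]
Here $D|_Z$ is effective because $Z\not\subset D$. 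A big class plus an effective class is big.
\end{itemize}
The argument of Lemma~\ref{ample rest} works verbatim for $\mathbb Q$-divisors, with $\mathbf B_+$ defined in the usual way, so I expect no genuine obstacle.
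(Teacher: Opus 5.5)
Your proposal is correct and follows the paper's own proof: apply Proposition~\ref{Tsi} with $\lambda=c$ to see that $K_{\Xb(\eta)}+(1-c)D$ is ample modulo $D$, then use Lemma~\ref{ample rest}, since $Z\not\subset D$. The paper leaves three details implicit that you supply: that $c>n/\pi$ together with $|\Nm(\eta)|\ge N(n,c)>(2\pi c/n)^{2n}$ gives the standing hypothesis $|\Nm(\eta)|>4^n$, the strict inequality needed for Proposition~\ref{Tsi}, and the handling of non-integral $c$.
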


\begin{proof}
By Proposition~\ref{Tsi},
$K_{\Xb(\eta)}+(1-c)D$ is ample modulo $D$. Since $Z\not\subset D$,
the conclusion follows from Lemma~\ref{ample rest}.
\end{proof}

Let \(Z\) be an integral quasi-projective variety over \(k=\mathbb C\),
and set \(K=k(Z)\). The generic fiber \(A_z\) of an abelian scheme
\(A\to Z\) is an abelian variety over \(K\). Conversely, every abelian
variety over \(K\) extends to an abelian scheme over some nonempty open
subset \(U\subset Z\). We denote by
\(
A(Z):=A_z(K)
\)
the Mordell--Weil group of rational sections of \(A\to Z\), and by
\(A(Z)_{\mathrm{tor}}\) its torsion subgroup. 

After replacing \(Z\) by a nonempty open subset, fix the polarization and polarized \(\mathcal O_F\)-module type \(M\) as in Remark~\ref{polarization-data}; thus \(X(1)\) below denotes the corresponding Hilbert modular variety.

Fix a totally real field \(F\). The Hilbert modular variety \(X(1)\)
parametrizes abelian varieties with real multiplication by
\(\mathcal O_F\); see Section~\ref{hilbert}. Accordingly, a family
\(A\to Z\) of such abelian varieties determines a moduli map
\(
\mu_A\colon Z\longrightarrow X(1).
\)
We say that \(A\to Z\) has \emph{no isotrivial directions} if
\(\mu_A\) is generically finite onto its image. When \(Z\) is a curve,
this is equivalent to non-isotriviality, while in higher dimensions it
excludes directions in the base along which the family is constant.
For any ideal \(\eta\subset\mathcal O_F\), the torsion level cover
\(X_1(\eta)\) parametrizes such abelian varieties together with a point
whose annihilator is \(\eta\). A torsion section
\(x\in A(Z)\) with
\(
\operatorname{Ann}_{\mathcal O_F}(x)=\eta
\)
induces a lift
\(
\mu_{A,x}\colon Z\longrightarrow X_1(\eta)
\)
of \(\mu_A\). If \(A\to Z\) has no isotrivial directions, then
\(\mu_{A,x}\) is also generically finite onto its image. Therefore, the canonical volume of $Z$ is greater than or equal to the canonical volume of its image in $X_1(\eta)$; see Remark \ref{volume-generically-finite}, and we will still denote the image of $Z$ in $X_1(\eta)$ by $Z.$

Given our current setup, to prove Theorem~\ref{Canonical vol}, it is enough to prove the following theorem:

\begin{theorem}\label{Main}
Given a real number $v\ge 0$ and a positive integer $n$, there exists an
integer $M=M(n,v)$ such that, for every totally real
field \(F/\mathbb Q\) of degree \(n\), every ideal
\(\eta\subset\mathcal O_F\) satisfying
\(
|\Nm(\eta)|\geq M,
\) and every smooth toroidal compactification
$\Xb(\eta)$ of $X_1(\eta)$, every positive-dimensional subvariety
$Z\subset\Xb(\eta)$ intersecting with $X_1(\eta)$ has canonical volume
$>v$. Explicitly, one may take
\[
M(n,v):=\max\left\{1+\lfloor(4\pi)^{2n}\rfloor,
\left\lceil\left(16n\max\{1,v\}
\bigl(1+n\max\{1,v\}\bigr)\right)^n\right\rceil\right\}.
\]
\end{theorem}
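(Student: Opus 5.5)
The plan is to chain Proposition~\ref{Canonical vol est}, Lemma~\ref{Bigness of small twist} and Theorem~\ref{log deg}. Write $L:=K_{\Xb(\eta)}+D$ and, for a constant $c>n$ to be fixed, $B:=K_{\Xb(\eta)}+(1-c)D=L-cD$. Eliminating $D=(L-B)/c$ gives
\[
K_{\Xb(\eta)}-(n-1)D=L-nD=\Bigl(1-\frac nc\Bigr)L+\frac nc\,B .
\]
I will take $c=2n$. This satisfies $c>n/\pi$, and Lemma~\ref{Bigness of small twist} requires $|\Nm(\eta)|\ge 1+\lfloor(4\pi)^{2n}\rfloor$, which is exactly the first term in $M(n,v)$. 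Under that hypothesis $B|_Z$ is big, in particular pseudoeffective, for every subvariety $Z$ meeting $X_1(\eta)$. Note also that $|\Nm(\eta)|\ge(4\pi)^{2n}>5^n$, so Lemma~\ref{no elliptic}, Theorem~\ref{log deg}(ii) and Proposition~\ref{Canonical vol est} all apply.

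Next I use that $L$ is nef. It is the pullback of an ample bundle from the Baily--Borel compactification, as recalled before Theorem~\ref{main sparsity}; alternatively, \eqref{ChernForm} shows it carries a semipositive good metric. Since volume does not decrease in pseudoeffective directions,
\[
\operatorname{vol}_Z\bigl((L-nD)|_Z\bigr)\ \ge\ \operatorname{vol}_Z\Bigl(\tfrac12 L|_Z\Bigr)=2^{-m}\,L^m\cdot Z .
\]
Proposition~\ref{Canonical vol est} then gives $\widetilde{\operatorname{vol}}_Z\ge (2n)^{-m}L^m\cdot Z$. Combining this with Theorem~\ref{log deg}(ii), and writing $t:=\tfrac12|\Nm(\eta)|^{1/(2n)}$,
\[
\widetilde{\operatorname{vol}}_Z\ \ge\ n^{-m}\sinh^{2m}\Bigl(\tfrac12\operatorname{arsinh} t\Bigr)=\Bigl(\frac{\sqrt{1+t^2}-1}{2n}\Bigr)^{m}.
\]
Also $\widetilde{\operatorname{vol}}_Z$ is at least the canonical volume of its image, by Remark~\ref{volume-generically-finite}.

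It remains to force this lower bound above $v$ uniformly in $m\ge1$. Since $m$ is unbounded, I need the base $\beta:=(\sqrt{1+t^2}-1)/(2n)$ to satisfy $\beta>\max\{1,v\}$. Then $\beta^m\ge\beta>v$ for every $m\ge1$. Setting $w:=\max\{1,v\}$, the condition $\beta>w$ is equivalent to $t^2>(2nw+1)^2-1=4nw(1+nw)$, that is, $|\Nm(\eta)|^{1/n}>16nw(1+nw)$. This is guaranteed by the second term of $M(n,v)$, with possibly a tiny adjustment of strict versus non-strict inequality at the ceiling.

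I expect the only delicate step to be the volume comparison. One must check that adding the pseudoeffective class $\tfrac nc B|_Z$ cannot lower the volume. This holds because $B|_Z$ is big, hence a limit of effective $\mathbb Q$-divisors, and volume is monotone and continuous on the big cone. One must also check that $L|_Z$ is nef, so that $\operatorname{vol}(L|_Z)=L^m\cdot Z$. All bounds depend only on $n$ and $|\Nm(\eta)|$, which gives the claimed uniformity in $F$ and in the toroidal compactification.
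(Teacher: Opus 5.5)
Your argument is correct and is essentially the paper's proof. With $c=2n$ your decomposition is exactly $K_{\Xb(\eta)}-(n-1)D=\frac12\bigl(K_{\Xb(\eta)}-(2n-1)D\bigr)+\frac12\bigl(K_{\Xb(\eta)}+D\bigr)$, followed by nefness of $K_{\Xb(\eta)}+D$, Theorem~\ref{log deg}(ii) and Proposition~\ref{Canonical vol est}, and your identity $\sinh^2\bigl(\tfrac12\operatorname{arsinh} t\bigr)=\tfrac12\bigl(\sqrt{1+t^2}-1\bigr)$ makes the paper's choice of the second term of $M(n,v)$ transparent.

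The one point to tidy is the borderline case $|\Nm(\eta)|=(16nw(1+nw))^n$, where $w=\max\{1,v\}$. There your bound gives only $\widetilde{\operatorname{vol}}_Z\ge v$, and you cannot adjust the ceiling because $M$ is fixed. The remedy is already in your hands: $(K_{\Xb(\eta)}-(2n-1)D)|_Z$ is big, not merely pseudoeffective, so adding it to the nef and big class $\tfrac12(K_{\Xb(\eta)}+D)|_Z$ strictly increases the volume. The paper obtains the same strictness via log-concavity, and this gives $\widetilde{\operatorname{vol}}_Z>\beta^m\ge v$.
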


\begin{Remark}\label{explicit N} If $\eta=\operatorname{Ann}_{\mathcal O_F}(x)$, then the order of $x$ is at most $|\Nm(\eta)|$. Theorem~\ref{Main} shows that every torsion point has order less than $M(n,v).$
Since
\(
M(n,v)\le
\left\lceil
\left(4\pi n\max\{1,v\}\right)^{2n}
\right\rceil
\), 
Theorem~\ref{Canonical vol} follows by taking
\[
N(n,v)=
\left(
\left\lceil
\left(4\pi n\max\{1,v\}\right)^{2n}
\right\rceil-1
\right)!.
\]
\end{Remark}

\begin{proof}[Proof of Theorem \ref{Main}]
Let $Z\subset\Xb(\eta)$ be an arbitrary $m$-dimensional subvariety
intersecting with $X_1(\eta)$. Applying Lemma~\ref{Bigness of small twist} with
$c=2n$ shows that, if
$|\Nm(\eta)|\ge N_1:=1+\lfloor(4\pi)^{2n}\rfloor$, then
the divisor \(
\bigl(K_{\Xb(\eta)}-(2n-1)D\bigr)|_Z
\)
is big. Notice that \(N_1>5^n\), so the second part of Proposition~\ref{inj thick} applies. We can write
\[
K_{\Xb(\eta)}-(n-1)D
=
\frac12\bigl(K_{\Xb(\eta)}-(2n-1)D\bigr)
+
\frac12\bigl(K_{\Xb(\eta)}+D\bigr).
\]
Note that $K_{\Xb(\eta)}+D$ is nef on $\Xb(\eta)$ because it is a pull back of an ample bundle from a Baily-Borel compactification; see \cite[Proposition 3.4.b]{mumford1977hirzebruch}. 
Therefore, its restriction to
$Z$ is nef and, by
Theorem~\ref{log deg}, big. By the log-concavity of the volume function
on the big cone of $Z$ (see, for example,
\cite[Theorem~11.4.9]{Lazarsfeld2}), we obtain
\begin{align*}
\operatorname{vol}_Z\left(
\bigl(K_{\Xb(\eta)}-(n-1)D\bigr)|_Z
\right)
&>
\operatorname{vol}_Z\left(
\frac12\bigl(K_{\Xb(\eta)}+D\bigr)|_Z
\right)\\
&=
(2)^{-m}(K_{\Xb(\eta)}+D)^m\cdot Z
\qquad\text{(by nefness)}\\
&\ge
\sinh^{2m}\left(
\frac12\operatorname{arsinh}\left(
\frac12|\Nm(\eta)|^{1/(2n)}
\right)\right)
\qquad\text{(by Theorem~\ref{log deg})}.
\end{align*}
Therefore, Proposition~\ref{Canonical vol est} gives
\[
\widetilde{\operatorname{vol}}_Z
>
n^{-m}
\sinh^{2m}\left(
\frac12\operatorname{arsinh}\left(
\frac12|\Nm(\eta)|^{1/(2n)}
\right)\right).
\]
The constant
\(
N_2:=
\left\lceil
\left(16n\max\{1,v\}
\bigl(1+n\max\{1,v\}\bigr)\right)^n
\right\rceil
\)
ensures that the right-hand side is at least $v$, uniformly for
$1\le m\le n$. Therefore, taking $M=\max\{N_1,N_2\}$ proves the theorem.
\end{proof}

A variety is of general type if and only if its canonical volume is positive. Similar to Theorem \ref{Main} we get the following:  
\begin{Corollary}\label{general type}
Let $F/\mathbb Q$ be a totally real field of degree $n$. 
Every subvariety of $X_1(\eta)$
is of general type provided that 
\(
|\Nm(\eta)|>(2\pi)^{2n}.
\)
\end{Corollary}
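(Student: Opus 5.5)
The plan is to run the proof of Theorem~\ref{Main} in its qualitative form: we only need positivity of the canonical volume, not a lower bound $>v$. This lets us use a twist $c$ slightly bigger than $n/\pi$ instead of $c=2n$.

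Let $Z\subset\Xb(\eta)$ be an $m$-dimensional subvariety meeting $X_1(\eta)$, and write $K:=K_{\Xb(\eta)}$ for short. By Proposition~\ref{Canonical vol est} it suffices to show that $(K-(n-1)D)|_Z$ is big. Since volume is monotone in pseudoeffective directions, it is even enough to show that the $\mathbb Q$-divisor $(K-(n-1)D)|_Z$ is big; then $\widetilde{\operatorname{vol}}_Z>0$.

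First we need a bigness input at level just above $(2\pi)^{2n}$. Proposition~\ref{Tsi} gives that $K+(1-\lambda)D$ is ample modulo $D$ whenever $|\Nm(\eta)|>(2\pi\lambda/n)^{2n}$. The hypothesis $|\Nm(\eta)|>(2\pi)^{2n}$ is exactly this condition with $\lambda=n$, so it covers the twist $K-(n-1)D$ directly. Lemma~\ref{ample rest} then shows that $(K-(n-1)D)|_Z$ is big, and Proposition~\ref{Canonical vol est} gives
\[
\widetilde{\operatorname{vol}}_Z\ge n^{-m}\operatorname{vol}_Z\bigl((K-(n-1)D)|_Z\bigr)>0.
\]
So no convex combination with $K+D$ is needed. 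We also do not need Theorem~\ref{log deg} here, because strict positivity comes entirely from Bakker--Tsimerman.

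We also have to check that the standing hypothesis $|\Nm(\eta)|>4^n$ holds. This is needed because Propositions~\ref{Canonical vol est} and~\ref{Tsi} rely on torsion-freeness via Lemma~\ref{no elliptic}. Since $(2\pi)^{2n}>4^n$, it is automatic.

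The main point to watch is the hypothesis of Proposition~\ref{Tsi}. The inequality is strict, so strictness of $|\Nm(\eta)|>(2\pi)^{2n}$ is exactly what is required. Also, Proposition~\ref{Tsi} is stated for rational $\lambda>0$, and we apply it at the integer $\lambda=n$, so it gives a genuine line bundle. If instead the cited result needed $\lambda$ strictly larger, we would pick $\lambda=n+\epsilon$ using the strict inequality, and then absorb the extra $-\epsilon D$ via $D|_Z$ effective, using $\nu^*(D|_Z)-\widetilde D\ge0$ as in Proposition~\ref{Canonical vol est}. Finally, "every subvariety of $X_1(\eta)$" means a subvariety of $\Xb(\eta)$ not contained in $D$, which is exactly the case covered above.
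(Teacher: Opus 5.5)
Your argument is correct and is essentially the paper's proof. Applying Proposition~\ref{Tsi} with $\lambda=n$ and then Lemma~\ref{ample rest} is exactly Lemma~\ref{Bigness of small twist} with $c=n$; its threshold $1+\lfloor(2\pi)^{2n}\rfloor$ is equivalent to $|\Nm(\eta)|>(2\pi)^{2n}$ for the integer $|\Nm(\eta)|$, and Proposition~\ref{Canonical vol est} then gives $\widetilde{\operatorname{vol}}_Z>0$. The only slip is in your opening sentence, which mentions a twist $c$ slightly larger than $n/\pi$; the twist you and the paper actually use is $c=\lambda=n$, which is what produces $K_{\Xb(\eta)}-(n-1)D$.
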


\begin{proof}Let $Z\subset\Xb(\eta)$ be an $m$-dimensional subvariety meeting $X_1(\eta)$. Lemma~\ref{Bigness of small twist}, applied with $c=n$, shows that $(K_{\Xb(\eta)}-(n-1)D)|_Z$ is big. Hence, Proposition~\ref{Canonical vol est} gives\[\widetilde{\operatorname{vol}}_Z\ge \frac1{n^m}\operatorname{vol}_Z\left((K_{\Xb(\eta)}-(n-1)D)|_Z\right)>0.\] Therefore $Z$ is of general type.
\end{proof}

\section{Sparsity of rational points}\label{sparsity}

Let \(K\) be a number field. On projective space, we use the absolute
multiplicative height

$$
\operatorname{H}([x_0:\cdots:x_N])
=
\prod_{v\in M_K}
\max_i |x_i|_v^{[K_v:\mathbb Q_v]/[K:\mathbb Q]},
$$
where the absolute values extend the standard ones on \(\mathbb Q\).
This is independent of both the choice of homogeneous coordinates and
the number field containing them. The problem of bounding \(K\)-rational points of bounded height on algebraic varieties is closely related to the determinant method, originating in the work of Bombieri--Pila \cite{bombieri1989number} and further developed by Heath-Brown \cite{heath2002density}. As observed in \cite{ellenberg2023sparsity,brunebarbe2022counting}, uniform lower bounds for the degrees of the subvarieties arising in this process lead to improved bounds for the growth rate of rational points; see also \cite{chiu2025arithmetic} for a mixed-variation setting. In particular, we recall the following theorem of Brunebarbe--Maculan:

\begin{theorem}(\cite[Theorem 4.4]{brunebarbe2022counting})\label{counting}
    Let $E$ be a subvariety of $\mathbb{P}_{K}^{N},$ let $\epsilon>0$ be a real number, let $n\ge 0$ and $e\ge 1$ be integers. Then, there is a real number $C=c(n,e, N, K,E,\epsilon)$ with the following property:
For an  $n$-dimensional subvariety $Y$ of $\Proj^{N}$
 of degree $\le e$ such that each subvariety $Z$ in $Y$ not contained in $E$ has degree $\ge \rho^{\operatorname{dim}(Z)}$ for some integer $\rho \ge 1$, and a real number $B > e^{[K: \Q]\epsilon}$, the following
inequality holds:
$$
\#\{
x\in Y(K)\setminus E \mid \operatorname{H}(x)\le B 
\}
\le
C B^{(1+\epsilon)[K:\Q]n(n+3)/(2\rho)}.$$
\end{theorem}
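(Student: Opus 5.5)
Since Theorem~\ref{counting} is quoted from \cite{brunebarbe2022counting}, I only sketch the argument I would expect behind it: the Bombieri--Pila/Heath-Brown determinant method, iterated down the dimension. The constant \(c(n,e,N,K,E,\epsilon)\) is allowed to depend on \(e\) and \(N\), so bounded-degree bookkeeping is absorbed into \(C\). The work lies in the exponent of \(B\).

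\textbf{Step 1 (one round of the determinant method).} Let \(W\subset\mathbb P^N_K\) be an irreducible variety of dimension \(k\) and degree \(\delta\). I would use the projective determinant-method estimate in the form of Salberger, Chen, or Brunebarbe--Maculan's adelic version. It should say the following: the points of \(W(K)\) with \(\operatorname{H}\le B\) are covered by at most
\[
C'\,B^{(1+\epsilon)[K:\mathbb Q](k+1)/(k\,\delta^{1/k})}
\]
hypersurfaces of degree \(\le d(k,\delta,\epsilon,K)\) not containing \(W\). Here \(C'\) depends only on \(k,\delta,N,K,\epsilon\). The dependence on \(\delta\) must be through \(\delta^{1/k}\). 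This is where the degree hypothesis enters: if \(\delta\ge\rho^k\), then \(\delta^{1/k}\ge\rho\), and this round costs at most \(B^{(1+\epsilon)[K:\mathbb Q](k+1)/(k\rho)}\le B^{(1+\epsilon)[K:\mathbb Q](k+1)/\rho}\).

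\textbf{Step 2 (descent in dimension).} Start with \(W=Y\), with \(k=n\) and \(\delta\le e\). Intersect \(Y\) with each auxiliary hypersurface. The components of each intersection have dimension \(n-1\), and by B\'ezout both their number and their degrees are bounded in terms of \(e,n,N,\epsilon,K\) alone. Components contained in \(E\) are discarded, since we only count points outside \(E\). Every other component \(Z\) satisfies \(\deg Z\ge\rho^{\dim Z}\) by hypothesis, so Step 1 applies to it again. Iterating down to dimension \(0\), zero-dimensional components contribute at most a bounded number of points each. Multiplying the costs of the rounds \(k=n,n-1,\dots,1\) bounds the exponent of \(B\) by
\[
\frac{(1+\epsilon)[K:\mathbb Q]}{\rho}\sum_{k=1}^n(k+1)=\frac{(1+\epsilon)[K:\mathbb Q]\,n(n+3)}{2\rho},
\]
with all other factors depending only on the allowed data. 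The condition \(B>e^{[K:\mathbb Q]\epsilon}\) is used to absorb the bounded constants of each round into the \(B^{\epsilon}\) slack, where a logarithm of the degree appears.

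\textbf{Main obstacle.} The delicate point is Step 1 with uniform constants. The covering estimate has to hold for every \(W\) of bounded degree over \(K\), with the exponent's dependence on \(\delta\) exactly through \(\delta^{1/k}\), and with implied constants independent of \(W\) and its height. This is the Salberger/Walsh-type uniformity of the global determinant method. Over a general number field, with the absolute height, it requires the adelic Siegel-lemma and slope estimates of Chen and Brunebarbe--Maculan. I would attempt it by bounding the determinant of the monomial evaluation matrix locally at each place: at archimedean places via Taylor expansion, and at finite places via reduction and multiplicities of \(W\bmod\mathfrak p\). These would then be combined through the product formula.
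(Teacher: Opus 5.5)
The paper gives no proof of this statement. It quotes Brunebarbe--Maculan as a black box and only remarks that the degree hypothesis should read $\deg Z\ge\rho^{\dim Z}$ rather than $\rho^{\dim E}$, so there is no internal argument to compare routes with.

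Your sketch is the standard uniform determinant-method descent, and it is sound in outline. It uses a Broberg/Paredes--Sasyk-type covering of the points of height $\le B$ on a geometrically integral $W$ of dimension $k$ and degree $\delta$ by $\ll B^{(1+\epsilon)[K:\mathbb Q](k+1)/(k\delta^{1/k})}$ hypersurfaces of bounded degree not containing $W$. This is followed by B\'ezout and induction on dimension. The bookkeeping is right, since $\sum_{k=1}^n(k+1)=n(n+3)/2$. Your descent also shows why the paper's correction is the right reading: the round in dimension $k$ needs $\delta_Z^{1/k}\ge\rho$ for every component $Z\not\subset E$ that the descent produces, which is exactly $\deg Z\ge\rho^{\dim Z}$.

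The points you gloss over are routine.
\begin{enumerate}
\item Step~1 requires geometric integrality. A geometric component $Z$ of $W\cap H$ that is not defined over $K$ should be replaced by $Z\cap Z^\sigma$ for a Galois conjugate $Z^\sigma\neq Z$. This intersection contains $Z(K)$ and has smaller dimension and bounded degree. It only skips rounds, so it does not hurt the exponent.
\item The restriction $B>e^{[K:\mathbb Q]\epsilon}$ plays no real role in your argument. For bounded $B$, Northcott bounds the count in terms of $N,K,e,\epsilon$, so it can be absorbed into $C$. Your explanation of that condition is therefore unnecessary rather than wrong.
\item With Step~1 exactly as you state it, you did not need to drop the factor $1/k$. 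Keeping it gives the exponent $(1+\epsilon)[K:\mathbb Q]\bigl(n+\sum_{k=1}^n 1/k\bigr)/\rho$, which is at most the stated one, so the loosening is harmless.
\end{enumerate}
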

\begin{Remark}
 There is a typo in the statement of Theorem \ref{counting} in the original paper. However, from its applications and the surrounding discussion in that paper, it is clear that the exponent of \(\rho\) should be the dimension of the subvariety \(Z\), not \(E\).
\end{Remark}

Let $\overline{Y}$ be a projective variety over $K$, let \(E\subset\overline Y\) be a closed subset, and let \(L\) be a semiample line bundle which is ample modulo \(E\); see Definition~\ref{ample mod def}. We fix once and for all an integer \(b\geq1\) such that \(bL\) is base-point free and a basis of its global sections, defining a morphism
$
\varphi_b:\overline Y\longrightarrow\mathbb P_{K}^N.
$
We define
\[
H_L(x):=\operatorname{H}(\varphi_b(x))^{1/b}.
\]
The resulting height depends on the choice of \(b\) and the basis, but if \(H_L\) and \(H'_L\) are heights arising from two such choices, then there exists a constant \(C_0\geq1\) such that
\[
C_0^{-1}H_L(x)\leq H'_L(x)\leq C_0H_L(x)
\]
for every \(x\in\overline{Y}(K)\). We keep the above choice of \(H_L\) fixed throughout. 
\begin{Lemma}\label{semiample counting}
Let \(n=\dim\overline Y\), and suppose that the morphism \(\varphi_b\) identifies \(\overline Y\setminus E\) with the complement of a closed subset \(E'\) in its projective image. Assume that there exists a real number \(\rho>0\) such that
\(L^m\cdot V\geq\rho^m\) for every subvariety \(V\subset\overline Y\) not contained in \(E\), where \(m=\dim V\). 
Then, for every \(\epsilon>0\), there exists a constant \(C_\epsilon>0\) such that
$$
\#\{x\in(\overline Y\setminus E)(K):H_L(x)\leq B\}
\leq C_\epsilon B^{(1+\epsilon)[K:\mathbb Q]n(n+3)/(2\rho)}
$$
for every \(B\geq1\).

\end{Lemma}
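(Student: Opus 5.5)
We will reduce Lemma \ref{semiample counting} to Theorem \ref{counting} applied to the projective image $Y:=\varphi_b(\overline Y)\subset\mathbb P^N_K$, with exceptional set $E'$. The key bookkeeping is the rescaling by $b$. Heights compare directly: $H_L(x)\le B$ if and only if $\operatorname{H}(\varphi_b(x))\le B^b$. Since $\varphi_b$ is a bijection from $\overline Y\setminus E$ onto $Y\setminus E'$, compatible with $K$-points, we get
\[
\#\{x\in(\overline Y\setminus E)(K):H_L(x)\le B\}=\#\{y\in Y(K)\setminus E':\operatorname{H}(y)\le B^b\}.
\]
Note that $\dim Y=n$, since $\varphi_b$ is birational onto its image; this uses that $L$ is ample modulo $E$ and $E\ne\overline Y$. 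Also, $\deg Y\le (bL)^n=:e$.

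Next we verify the degree hypothesis of Theorem \ref{counting}, with $\rho$ replaced by $b\rho$. Let $W\subset Y$ be a subvariety not contained in $E'$, of dimension $m$. Its strict transform $V$, meaning the closure of $\varphi_b^{-1}(W\setminus E')$, is a subvariety of $\overline Y$ not contained in $E$, and $\varphi_b|_V\colon V\to W$ is birational. By the projection formula,
\[
\deg W=\mathcal O(1)^m\cdot W=(bL)^m\cdot V=b^m L^m\cdot V\ge (b\rho)^m.
\]

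The one technical issue, and the main obstacle, is that Theorem \ref{counting} is stated for an integer $\rho\ge1$, while our $b\rho$ is only a positive real. We first try enlarging $b$. Replacing $b$ by a multiple $kb$, with the Veronese-composed embedding, changes $H_L$ only up to the constant $C_0$. We cannot simply round $b\rho$ down to an integer, because that loses the exact exponent. Instead, we choose $k$ so large that $\lfloor kb\rho\rfloor\ge kb\rho/(1+\epsilon')$; this is possible when $b\rho\ge$ something positive, which holds since $\rho>0$. We then apply the theorem with the integer $\rho':=\lfloor kb\rho\rfloor$ and with $\epsilon'$ in place of $\epsilon$. This yields, for $B^{kb}>e^{[K:\mathbb Q]\epsilon'}$, the bound
\[
C\,(B^{kb})^{(1+\epsilon')[K:\mathbb Q]n(n+3)/(2\rho')}\le C\,B^{(1+\epsilon')^2[K:\mathbb Q]n(n+3)/(2\rho)}.
\]
Choosing $(1+\epsilon')^2\le1+\epsilon$ gives the required exponent.

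Finally, we absorb the finitely many small ranges of $B$, namely those with $B^{kb}\le e^{[K:\mathbb Q]\epsilon'}$, together with the comparison constant $C_0$, into $C_\epsilon$. This is possible by Northcott's theorem: for bounded $B$ there are only finitely many points of bounded height, because $\varphi_{kb}$ is injective on $\overline Y\setminus E$. This gives the bound for all $B\ge1$.
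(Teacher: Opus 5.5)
Your proposal is correct and follows essentially the same route as the paper. You compose $\varphi_b$ with a high Veronese embedding so that the real lower bound $kb\rho$ can be replaced by an integer with only a factor $(1+\epsilon')$ loss, apply Theorem~\ref{counting} with a smaller $\epsilon$, and absorb the bounded range of $B$ into the constant. One cosmetic correction: on the Veronese model the degree bound should be $e=(kbL)^n$ rather than $(bL)^n$. Also, the height there is exactly $H_L^{kb}$, so no comparison constant is needed.
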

\begin{proof}
Compose \(\varphi_b\) with the \(q\)-th Veronese embedding. If \(Z\) is a subvariety of the resulting projective model not contained in \(E'\), and \(V\subset\overline Y\) is its strict transform, then the projection formula gives
$$
\deg(Z)=(qbL)^m\cdot V\geq(qb\rho)^m,
$$
where \(m=\dim Z\). Fix \(\epsilon>0\). For \(q\) sufficiently large, we may choose an integer \(d<qb\rho\) such that
$(1+\epsilon/2)\frac{qb}{d}\leq\frac{1+\epsilon}{\rho};$ for example, one may take \(d=\lceil qb\rho\rceil-1\). Hence \(\deg(Z)>d^{\dim Z}\), and Theorem~\ref{counting}, applied with \(\epsilon/2\), gives

$$
\begin{aligned}
\#\{x\in(\overline Y\setminus E)(K):H_L(x)\leq B\}
&\leq C\bigl(B^{qb}\bigr)^{(1+\epsilon/2)[K:\mathbb Q]n(n+3)/(2d)}\\
&\leq C B^{(1+\epsilon)[K:\mathbb Q]n(n+3)/(2\rho)}.
\end{aligned}
$$

Here we used that the height associated with the Veronese embedding is \(H_L^{qb}\). Once \(q\) is chosen, the projective model is fixed, and hence the resulting constant \(C_\epsilon\) is independent of \(B\). Enlarging \(C\), if necessary, covers the bounded range of \(B\) excluded by Theorem~\ref{counting}.
\end{proof}

\subsection*{Arithmetic model.}
The purpose of this standard discussion is only to place the varieties and line bundles used below over $\mathbb Q$, so that the height argument is well-defined.
We follow the framework of Dimitrov--Tilouine
\cite[\S1]{dimitrovtilouine}, specialized to our setting and notation.
For the levels in Theorem~\ref{log grow}, choose an integral ideal
\(\mathfrak c\) in the narrow ideal class determined by the fixed
polarized type, prime to \(\Nm(\eta)\mathcal O_F\); this is possible by
weak approximation \cite[Remark~1.5]{dimitrovtilouine}. Since \(\eta\)
is relatively prime to \(\Delta_F\), it is also relatively prime to
\(\Nm(\mathfrak c\mathfrak d_F)\mathcal O_F\). Moreover, if
\(\eta\mid(2)\) or \(\eta\mid(3)\), then respectively
\(|\Nm(\eta)|\leq2^n\) or \(|\Nm(\eta)|\leq3^n\), contrary to
\(|\Nm(\eta)|\geq5^n\). Thus both conditions of Dimitrov--Tilouine's
hypothesis \((\mathrm{NT})\) hold.
For every $\mathbb Q$-algebra $R$, consider the determinant map
\[
\det\colon \operatorname{GL}_2(F\otimes_{\mathbb Q}R)
\longrightarrow (F\otimes_{\mathbb Q}R)^\times.
\]
Define
\(
\mathsf G(R)
:=
\left\{
g\in\operatorname{GL}_2(F\otimes_{\mathbb Q}R)
\;\middle|\;
\det(g)\in R^\times
\right\},
\)
where $R^\times$ is embedded diagonally in $(F\otimes_{\mathbb Q}R)^\times$ by $a\mapsto 1\otimes a$. This group is represented by the fiber product
\[
\mathsf G
=
\operatorname{Res}_{F/\mathbb Q}\operatorname{GL}_2
\times_{\operatorname{Res}_{F/\mathbb Q}\mathbb G_m}
\mathbb G_m,
\]
taken with respect to the determinant and the diagonal inclusion. The determinant yields a split exact sequence
\[
1\longrightarrow \operatorname{Res}_{F/\mathbb Q}\operatorname{SL}_2
\longrightarrow \mathsf G
\xrightarrow{\det}\mathbb G_m
\longrightarrow 1,
\]
with section \(a\mapsto \operatorname{diag}(1,a)\).
Since the quotient $\mathbb G_m$ is
commutative and $\operatorname{Res}_{F/\mathbb Q}\operatorname{SL}_2$
is perfect, we have
\(
\mathsf G^{\mathrm{der}}
=
\operatorname{Res}_{F/\mathbb Q}\operatorname{SL}_2.
\)

Let $(\mathsf G,\mathcal H)$ be the associated Hilbert Shimura datum,
and let
\(
\mathcal H^+=\mathbb H^n
\)
be its distinguished connected component. Let $M$ be the rank-two
projective $\mathcal O_F$-module fixed in
Section~\ref{hilbert}, and set
\(
\widehat M:=M\otimes_{\mathbb Z}\widehat{\mathbb Z},
\) and
\(e_1:=(1,0).
\)
Let
\[
K_\eta
:=
\left\{
g\in\mathsf G(\mathbb A_f)
\;\middle|\;
g\widehat M=\widehat M,\quad
g(e_1)\equiv e_1\pmod{\eta\widehat M}
\right\}.
\]
Writing
\(
\mathsf G(\mathbb Q)^+
:=
\{g\in\mathsf G(\mathbb Q):\det(g)>0\},
\)
we have
\[
\mathsf G(\mathbb Q)^+\cap K_\eta=\Gamma_1(\eta),
\qquad
\det\bigl(\mathsf G(\mathbb Q)^+\bigr)=\mathbb Q_{>0}^\times,
\qquad
\det(K_\eta)=\widehat{\mathbb Z}^\times.
\]
Strong approximation for $\mathsf G^{\mathrm{der}}$, together with the
exact sequence above, gives
\[
\mathsf G(\mathbb Q)^+\backslash
\mathsf G(\mathbb A_f)/K_\eta
\overset{\det}{\simeq}
\mathbb Q_{>0}^\times\backslash
\mathbb A_f^\times/\widehat{\mathbb Z}^\times
=
\{1\};
\]
see \cite[Theorem~4.16]{Mil05}.
It follows from the complex uniformization that
\[
\operatorname{Sh}_{K_\eta}(\mathsf G,\mathcal H)(\mathbb C)
\simeq
\bigl(\mathsf G(\mathbb Q)^+\cap K_\eta\bigr)
\backslash\mathcal H^+
=
\Gamma_1(\eta)\backslash\mathbb H^n
=
X_1(\eta).
\]

The defining cocharacter has identical components at all embeddings
$F\hookrightarrow\mathbb R$, so its conjugacy class is Galois
invariant and the reflex field of $(\mathsf G,\mathcal H)$ is
$\mathbb Q$. Canonical-model theory therefore endows $X_1(\eta)$
with a model over $\mathbb Q$; see
\cite[\S\S12--14]{Mil05}. Moreover, by
\cite[Theorem~12.3]{Pin90}, its Baily--Borel compactification
$X_1(\eta)^*$ is a normal projective variety over $\mathbb Q$.
We use these models throughout this section.

\subsection*{Counting result}
Let \(\overline X_1(\eta)\) be a smooth toroidal compactification fixed in Section \ref{hilbert}, with boundary divisor \(D\), and consider the log-canonical bundle on \(\overline X_1(\eta)\):
\[
L:=K_{\overline X_1(\eta)}+D.
\]
Let \(\pi\colon\overline X_1(\eta)\longrightarrow X_1(\eta)^*
\) be the Baily--Borel contraction. By
\cite[Theorem~5.2]{dimitrovtilouine}, there exist
\(b\geq 1\) and a very ample line bundle \(A\) on \(X_1(\eta)^*\), defined
over \(\mathbb Q\), such that
\(
\pi^*A\simeq bL.
\)
Fix a closed immersion
\(
\iota\colon X_1(\eta)^*
\hookrightarrow
\mathbb P^N_{\mathbb Q}
\)
defined by $A$, and set
\[
H_A(x):=H\bigl(\iota(x)\bigr),
\qquad
H_L(x):=H_A(x)^{1/b}
\]
for $x\in X_1(\eta)(\overline{\mathbb Q})$. We keep these choices
fixed. Since $A$ is very ample, $H_L$ has the Northcott
property on $X_1(\eta)(K)$ for every number field $K$. For further properties of this height see \cite[\S 8]{memariansorkhabi2025volumes}.

\begin{theorem}\label{log grow}
Assume that \(|\Nm(\eta)|\geq5^n\) and that \(\eta\) is relatively prime to the discriminant \(\Delta_F\).
For every number field \(K\), set
\[
N_{\eta,K}(B)
=\#\{x\in X_1(\eta)(K):H_L(x)\leq B\}.
\]
Then
\[
\limsup_{B\to\infty}
\frac{\log\max\left\{1,N_{\eta,K}(B)\right\}}
{\log B}
\leq \delta_{\eta,K,n},
\]
where
\[
\delta_{\eta,K,n}
=
\frac{\sqrt5[K:\mathbb Q]n(n+3)}
{|\Nm(\eta)|^{1/(2n)}}.
\]
\end{theorem}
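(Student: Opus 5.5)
The plan is to apply Lemma~\ref{semiample counting} with \(\overline Y=\overline X_1(\eta)\), \(E=D\), and the log-canonical bundle \(L=K_{\overline X_1(\eta)}+D\). The lower bound on degrees of subvarieties will come from the second part of Theorem~\ref{log deg}.

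First I verify the hypotheses of Lemma~\ref{semiample counting}. By \cite[Theorem~5.2]{dimitrovtilouine} we have \(\pi^*A\simeq bL\) with \(A\) very ample on \(X_1(\eta)^*\) over \(\mathbb Q\). Hence \(bL\) is base-point free. The morphism \(\varphi_b\) is \(\iota\circ\pi\), and its image is \(X_1(\eta)^*\). Since \(\pi\) is an isomorphism over \(X_1(\eta)\), \(\varphi_b\) identifies \(\overline X_1(\eta)\setminus D\) with the complement of the finite set of cusps \(E'\). By Proposition~\ref{Tsi} with \(\lambda\to 0^+\), or directly, \(L\) is ample modulo \(D\). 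The height \(H_L\) defined in the statement coincides with the one in the lemma, up to the chosen \(b\) and basis.

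Next I set
\[
\rho:=2\sinh^{2}\left(\tfrac12\operatorname{arsinh}\left(\tfrac12|\Nm(\eta)|^{1/(2n)}\right)\right).
\]
Theorem~\ref{log deg} gives \(L^m\cdot V\ge\rho^m\) for every subvariety \(V\not\subset D\) of dimension \(m\). All base changes are harmless, because \(L\) and \(D\) are defined over \(\mathbb Q\) and degrees are geometric.

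The lemma then yields, for every \(\epsilon>0\),
\[
N_{\eta,K}(B)\le C_\epsilon B^{(1+\epsilon)[K:\mathbb Q]n(n+3)/(2\rho)}.
\]
Taking \(\log\), dividing by \(\log B\), letting \(B\to\infty\) and then \(\epsilon\to0\) bounds the limsup by \([K:\mathbb Q]n(n+3)/(2\rho)\).

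It remains to check the elementary inequality \(2\rho\ge |\Nm(\eta)|^{1/(2n)}/\sqrt5\). Write \(t=\frac12|\Nm(\eta)|^{1/(2n)}\), so \(t\ge\frac{\sqrt5}{2}\). With \(a=\operatorname{arsinh}t\), we have \(2\sinh^2(a/2)=\cosh a-1=\sqrt{1+t^2}-1\). So the claim is \(2(\sqrt{1+t^2}-1)\ge 2t/\sqrt5\), that is, \(\sqrt{1+t^2}\ge 1+t/\sqrt5\). Squaring, this becomes \(\tfrac45t^2\ge \tfrac{2}{\sqrt5}t\), i.e. \(t\ge \frac{\sqrt5}{2}\). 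This is exactly the hypothesis \(|\Nm(\eta)|\ge5^n\), which explains the constant \(\sqrt5\).

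I expect the main obstacle to be the bookkeeping in the first step rather than the arithmetic. One must make sure that the toroidal compactification, \(D\), and \(L\) are all defined over \(\mathbb Q\), or at least that the count of \(K\)-points can be carried out on \(X_1(\eta)^*\) inside \(\mathbb P^N_{\mathbb Q}\). This is why the hypothesis \((\eta,\Delta_F)=1\) enters, via Dimitrov--Tilouine. If needed, I would bypass the toroidal model entirely. Apply Theorem~\ref{counting} directly to \(Y=X_1(\eta)^*\subset\mathbb P^N\) with \(E\) the cusps, and compute degrees of subvarieties of \(Y\) by the projection formula through strict transforms in \(\overline X_1(\eta)\) over \(\mathbb C\).
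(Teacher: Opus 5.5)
Your proposal is correct and follows essentially the same route as the paper: the degree bound of Theorem~\ref{log deg} is fed into Lemma~\ref{semiample counting}, giving the exponent $[K:\mathbb Q]n(n+3)/(4s_\eta)$ with $s_\eta=\sinh^2\bigl(\tfrac12\operatorname{arsinh}(\tfrac12|\Nm(\eta)|^{1/(2n)})\bigr)$, followed by the same elementary inequality, which you verify explicitly while the paper only asserts it. The only cosmetic difference is that the paper applies the lemma to $X_1(\eta)^*$ with the very ample $A$, the cusps as the exceptional set, and $\rho=2bs_\eta$, computing degrees through strict transforms in $\overline X_1(\eta)$ over $\mathbb C$. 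This is exactly your stated fallback, and it avoids needing the toroidal compactification to be defined over $K$.
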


\begin{Remark}[\textbf{Sparsity of rational points}]
In Theorem \ref{log grow}, \(\delta_{\eta,K,n}\to0\) uniformly when \(n\) and
\([K:\mathbb Q]\) are bounded and
\(|\Nm(\eta)|\to\infty\) through ideals prime to \(\Delta_F\). Therefore, as we go higher in the tower \(X_1(\eta)\), its \(K\)-rational points become sparser.
\end{Remark}

\begin{proof}
Set
\(
s_\eta
:=
\sinh^2\left(
\frac12\operatorname{arsinh}
\left(\frac12|\Nm(\eta)|^{1/(2n)}\right)
\right)\)
and
\( E:=X_1(\eta)^*\setminus X_1(\eta).
\)
Let \(Z\subset X_1(\eta)^*\) be an \(m\)-dimensional subvariety not contained in \(E\), with \(m>0\), and let \(\widetilde Z\subset\overline X_1(\eta)\) be the strict transform of \(Z_{\mathbb C}\). Since \(\pi\) is an isomorphism over \(X_1(\eta)\), the projection formula and Theorem~\ref{log deg} give
\[
\deg_A(Z)
=
\bigl(\pi^*A\bigr)^m\cdot\widetilde Z
=
b^mL^m\cdot\widetilde Z
\geq
(2bs_\eta)^m.
\]
 Applying Lemma~\ref{semiample counting} to \(X_1(\eta)^*\), \(E\), and \(A\), with \(\rho=2bs_\eta\), and using \(H_L=H_A^{1/b}\), we obtain, for every \(\epsilon>0\),
\[
N_{\eta,K}(B)
\leq
C_\epsilon B^{(1+\epsilon)[K:\mathbb Q]n(n+3)/(4s_\eta)}.
\]
It follows that
\[
\limsup_{B\to\infty}
\frac{\log\max\{1,N_{\eta,K}(B)\}}{\log B}
\leq
\frac{[K:\mathbb Q]n(n+3)}{4s_\eta}.
\]
Finally,
\(
4s_\eta
\geq
\frac{1}{\sqrt5}|\Nm(\eta)|^{1/(2n)},
\)
since \(|\Nm(\eta)|^{1/(2n)}\geq\sqrt5\), which gives the stated bound.
\end{proof}

 An important arithmetic ingredient in both
\cite{ellenberg2023sparsity,brunebarbe2022counting} is an integral
Chevalley--Weil argument. Namely, an integral point determines an
unramified torsor arising from the fiber of a finite \'etale cover;
Hermite--Minkowski implies that only finitely many such torsors occur,
and twisting by these torsors allows every integral point to lift to a
rational point on one of finitely many twists. We conclude this section
with a remark explaining why the same argument does not directly apply
to rational points on quasi-projective varieties, and hence why we adopt
an intrinsic approach in this paper.

\begin{Remark}[\textbf{Integral versus Rational Chevalley--Weil
}]\label{Chevalley--Weil}
Let \(X\) be a quasi-projective variety over a number field \(K\), and
let \(f\colon Y\to X\) be a finite \'etale Galois cover with Galois
group
\(
G:=\operatorname{Aut}_X(Y).
\)
By spreading out, there exist a finite set \(S\) of places of \(K\) and
\(\mathcal O_{K,S}\)-models \(\mathcal X\) and \(\mathcal Y\) of \(X\)
and \(Y\) such that \(f\) extends to a finite \'etale \(G\)-torsor
\(
\mathcal Y\to\mathcal X,
\)
where \(G\) is regarded as a constant finite group scheme. For every
\(x\in\mathcal X(\mathcal O_{K,S})\), the fiber
\[
\mathcal Y_x:=
\mathcal Y\times_{\mathcal X}\operatorname{Spec}\mathcal O_{K,S}
\]
defines a class in \(H^1(\mathcal O_{K,S},G)\). By
Hermite--Minkowski, this set is finite; see
\cite[Proposition~4.6]{brunebarbe2022counting} or
\cite[Proof of Claim~2]{ellenberg2023sparsity}. Twisting \(Y\to X\)
by representatives of these finitely many classes therefore produces
finitely many covers \(Y_i\to X\) such that every \(S\)-integral point
of \(X\) lifts, in fact integrally, to some \(Y_i\). This is the form
of the Chevalley--Weil argument used in
\cite[\S 3]{ellenberg2023sparsity} and
\cite[Proposition~4.12]{brunebarbe2022counting}.

For rational points on a nonproper variety, however, the corresponding
torsors need not be unramified outside any fixed finite set of places.
For example, under the finite \'etale morphism
\[
\mathbb G_m\longrightarrow\mathbb G_m,\qquad t\longmapsto t^2,
\]
the fiber above \(a\in K^\times\) is the torsor defined by \(t^2=a\).
Already for \(K=\mathbb Q\) and \(a=p\), where \(p\) is an odd prime,
the field \(\mathbb Q(\sqrt p)\) is ramified at \(p\). Therefore, as \(p\)
varies, the resulting torsor classes are ramified at infinitely many
places and are pairwise distinct. By contrast, if
\(a\in\mathcal O_{K,S}^{\times}\) and \(S\) contains every finite place
of \(K\) above \(2\), then
\[
\operatorname{Spec}\mathcal O_{K,S}[t]/(t^2-a)
\longrightarrow\operatorname{Spec}\mathcal O_{K,S}
\]
is finite \'etale, since its discriminant \(4a\) is a unit. Hence the
associated torsor is unramified outside \(S\).
\end{Remark}

\bibliographystyle{alpha}
\bibliography{Ref}

\end{document}